
\documentclass[11pt,twoside,english]{article}
\usepackage{mathptmx}
\usepackage{graphicx}

\usepackage[T1]{fontenc}
\usepackage[utf8]{inputenc}
\usepackage[a4paper]{geometry}
\geometry{verbose,tmargin=2.8cm,bmargin=2.8cm,lmargin=2.8cm,rmargin=2.8cm}
\usepackage{color}
\usepackage{babel}
\usepackage{mathrsfs}
\usepackage{amsmath}
\usepackage{amsthm}
\usepackage{amssymb}
\usepackage{esint}
\usepackage[unicode=true,
 bookmarks=true,bookmarksnumbered=true,bookmarksopen=false,
 breaklinks=false,pdfborder={0 0 1},backref=false,colorlinks=true]
 {hyperref}
\hypersetup{
 pdfauthor={G Xi}}

\makeatletter
\numberwithin{equation}{section}
\theoremstyle{plain}
\newtheorem{thm}{\protect\theoremname}[section]
\theoremstyle{plain}
\newtheorem{prop}[thm]{\protect\propositionname}
\theoremstyle{plain}
\newtheorem{lem}[thm]{\protect\lemmaname}
\theoremstyle{remark}
\newtheorem{rem}[thm]{\protect\remarkname}
\theoremstyle{plain}
\newtheorem{cor}[thm]{\protect\corollaryname}
\theoremstyle{remark}
\newtheorem*{rem*}{\protect\remarkname}

\usepackage{babel}
\allowdisplaybreaks[4]

\date{}

  \providecommand{\corollaryname}{Corollary}
  
  \providecommand{\lemmaname}{Lemma}
\providecommand{\theoremname}{Theorem}

\theoremstyle{plain}
\newtheorem{assumption}{Assumption}

\makeatother

\providecommand{\corollaryname}{Corollary}
\providecommand{\lemmaname}{Lemma}
\providecommand{\propositionname}{Proposition}
\providecommand{\remarkname}{Remark}
\providecommand{\theoremname}{Theorem}

\begin{document}
\global\long\def\divg{{\rm div}\,}%

\global\long\def\curl{{\rm curl}\,}%

\global\long\def\rt{\mathbb{R}^{3}}%

\global\long\def\rd{\mathbb{R}^{d}}%

\global\long\def\rtwo{\mathbb{R}^{2}}%

\global\long\def\e{\epsilon}%

\title{Incompressible viscous fluids in $\mathbb{R}^2$ and SPDEs on graphs, in presence of  fast advection and non smooth noise}
\author{Sandra Cerrai\thanks{Department of Mathematics, University of Maryland, College Park, MD
20742, USA. Emails: cerrai@umd.edu, gxi@umd.edu}\ \,\thanks{Partially supported by NSF grant DMS-1712934  {\em Analysis of Stochastic Partial Differential Equations with Multiple Scales}} \ and Guangyu Xi\footnotemark[1]}
\maketitle
\begin{abstract}
The asymptotic behavior of a class of stochastic reaction-diffusion-advection equations in the plane is studied. We show that as the divergence-free advection term becomes larger and larger, the solutions of such equations converge to the solution of a suitable stochastic PDE defined on the graph associated with the Hamiltonian. Firstly, we deal with the case that the stochastic perturbation is given by a singular spatially homogeneous Wiener process taking values in the space of Schwartz distributions. As in previous works, we assume here that the derivative of the period of the motion on the level sets of the Hamiltonian  does not vanish. Then, in the second part, without assuming this condition on the derivative of the period, we study a weaker type of convergence for the solutions of a suitable class of linear SPDEs. 

\end{abstract}

\section{Introduction}
In this paper we are interested in studying the limiting behavior of some particles that move together with an incompressible flow in $\mathbb{R}^2$, with stream function $H(x)$, under the assumption that the flow has a small viscosity and the particles are subject to a slow chemical reaction, which consists of a deterministic and a stochastic component. The density $v_\e(t,x)$ of the particles, at time $t\geq 0$ and position $x \in\,\mathbb{R}^2$,  satisfies the equation 
\begin{equation} \label{intro2}
\left\{
\begin{array}{l}
\displaystyle{\partial_t v_\e(t,x)=\frac \e 2\,\Delta v_\e(t,x) +\langle \nabla^\perp H(x),\nabla v_\e(t,x)\rangle+\e b(v_\e(t,x))+\sqrt{\e}\sigma(v_\e(t,x))\partial_t\mathcal{W}(t,x),}\\
\displaystyle{v_\e(0,x)=\varphi(x),\ \ \ \ x \in\,\mathbb{R}^2,}
\end{array}\right. 
\end{equation}
for some parameter $0<\epsilon \ll 1$. Throughout the paper, we assume that the Hamiltonian $H:\mathbb{R}^2\to\mathbb{R}$ is a generic function, having four continuous derivatives, with bounded second derivative, such that $H(x)\to \infty$, as $|x|\to\infty$. The nonlinearities $b, \sigma:\mathbb{R} \to \mathbb{R}$ are assumed to be Lipschitz continuous and $\mathcal{W}(t,x)$ is a spatially homogeneous Wiener process (see below for all details). 

It is immediate to check that, under these conditions, on any finite time interval $[0,T]$ the solutions $v_\e$ of equation \eqref{intro2} converge to the solution $v$ of the Liouville equation
\[\partial_t v(t,x)=\langle \nabla^\perp H(x),\nabla v(t,x)\rangle,\ \ \ \ v(0,x)=\varphi(x).\]
However, on  time intervals of order $\epsilon^{-1}$ the difference $v_\e-v$ is of order $1$, as $\epsilon\to 0$.  Actually, on such a time interval, the limiting behavior of $v_\e$ is described by a non-standard SPDE defined on the graph $\Gamma$ associated with the Hamiltonian $H$, which is obtained by identifying all points on the same connected component of each level set of $H$ (see Subsection 2.1 for the precise definition).
Such an asymptotic behavior of $v_\e$ has been studied in \cite{CerraiFreidlin2019}, under quite restrictive conditions on the regularity of the noise $\mathcal{W}(t)$ and under the assumption that the derivative of the period of the motion on the level sets of the Hamiltonian  $H$ does not vanish. In the present paper we want to understand what happens when  these conditions are not satisfied.

To this purpose, before proceeding with the description of the content of the paper, we would like to remark that the study of SPDEs on graphs is  still a quite new field of investigation and very few results are available in the existing literature. In addition to  the already mentioned paper \cite{CerraiFreidlin2019}, in \cite{CF} a class of SPDEs on graphs, obtained as limits of SPDEs in narrow tubes, is studied. Moreover in \cite{BMZ} first and then, more recently, in \cite{Fan2017}, suitable classes of SPDEs on graphs have been also considered. 

\medskip

With the time change $t\mapsto t/\epsilon$, for every fixed $\e>0$ the function $u_\e(t,x):=v_\e(t/\epsilon,x)$ satisfies the equation 
\begin{equation}
\label{intro1}
\left\{
\begin{array}{l}
\displaystyle{\partial_t u_\e(t,x)=L_\epsilon u_\epsilon(t,x)+ b(u_\e(t,x))+\sigma(u_\e(t,x))\partial_t\mathcal{W}(t,x),}\\
\displaystyle{u_\e(0,x)=\varphi(x),\ \ \ \ x \in\,\mathbb{R}^2,}
\end{array}\right. \end{equation}
where 
\[L_\epsilon \varphi(x)=\frac 12 \Delta \varphi(x)+\frac 1\epsilon \langle \nabla^\perp H(x),\nabla \varphi(x)\rangle.\]
The operator $L_\epsilon$ is the generator of the Markov semigroup $S_\e(t)$, $t\geq 0$, 
associated with the stochastic differential equation
\[dX_\e(t)=\frac 1\e\,\nabla^\perp H(X_\e(t))\,dt+dB(t),\]
where $B(t)$ is a Brownian motion in $\mathbb{R}^2$, defined on the stochastic basis $(\Omega, \mathcal{F}, \{\mathcal{F}_t\}_{t\geq 0}, \mathbf{P})$. More precisely, for every Borel and bounded  function $\varphi:\mathbb{R}^2\to \mathbb{R}$ and every $x \in\,\mathbb{R}^2$
\begin{equation}
\label{intro12}
S_\epsilon(t)\varphi(x)=\mathbf{E}_x \varphi(X_\epsilon(t)),\ \ \ \ \ t\geq 0.
\end{equation}
This means, in particular, that $u_\e$ is a  mild solution to equation \eqref{intro1} if 
\begin{equation}
\label{intro4}
u_\e(t)=S_\epsilon(t) \varphi(x)+\int_0^tS_\epsilon(t-s)B(u_\epsilon(s))\,ds+\int_0^t S_\epsilon(t-s)\Sigma(u_\epsilon(s))d\mathcal{W}(s),\end{equation}
where $B$ and $\Sigma$ are the composition/multiplication operators associated with $b$ and $\sigma$, respectively.

In \cite{CerraiFreidlin2019}, together with M. Freidlin, the first named author proved that  
for every $p\geq 1$ and $0<\tau<T$
\begin{equation}
\label{intro3}
\lim_{\epsilon \to 0} \mathbb{E}\sup_{t \in\,[\tau,T]}|u_\e(t)-\bar{u}(t)\circ \Pi|^p_{H_\gamma}=0,
\end{equation}
where $\bar{u}$ is the solution of an {\em averaged} SPDE defined on the graph $\Gamma$ and $H_\gamma$ is a suitable weighted space of square integrable functions on $\mathbb{R}^2$, with respect to  a finite measure $\gamma^\vee(x)\,dx$. 

Due to \eqref{intro4}, it is evident that the proof of \eqref{intro3} is based on the analysis of the limiting behavior of the semigroups $S_\epsilon(t)$, as $\e\downarrow 0$, for every $t \in\,[\tau,T]$. To this purpose, in \cite[Chapter 8]{FreidlinWentzell1998}, it is proved that if $\Pi$ is the projection of $\mathbb{R}^2$ onto $\Gamma$,  the slow process $Y_\e(\cdot):=\Pi(X_\e(\cdot))$, defined on the graph $\Gamma$, converges weakly in $C([0,T];\Gamma)$ to a continuous Markov process $\bar{Y}(\cdot)$ on $\Gamma$, whose generator $\bar{L}$ is explicitly given in terms of differential operators on each edge and suitable gluing conditions at the vertices.    
Hence, starting from such result, in \cite[Appendix A]{CerraiFreidlin2019} it has been shown that for every $\varphi \in\,C_b(\mathbb{R}^2)$ and for every $x \in\,\mathbb{R}^2$ and $0<\tau<T$
\begin{equation}
\label{intro5}
\lim_{\e\to 0} \sup_{t \in\,[\tau,T]}\left|S_\e(t)\varphi(x)-(\bar{S}(t) \varphi^\wedge)\circ \Pi (x)\right|=0,\end{equation}
where
\[\varphi^\wedge(z,k):=\frac 1{T_k(z)}\oint_{C_k(z)}\frac{\varphi(x)}{|\nabla H(x)|}\,dl_{z,k},\ \ \ (z,k) \in\,\Gamma,\]
$dl_{z,k}$ is the length element on $C_k(z)$, the $k$-th connected component  of $C(z):=\left\{x \in\,\mathbb{R}^2\,:\ H(x)=z\right\}$, and
\[T_k(z):=\oint_{C_k(z)}\frac 1{|\nabla H(x)|}\,dl_{z,k},\]
(for all details see Subsection \ref{ss2.1}).
Once identified the right weighted spaces $H_\gamma$ and proved limit \eqref{intro5}, it can be shown that for every $\varphi \in\,H_\gamma$
\begin{equation}
\label{intro6}
\lim_{\e\to 0}\sup_{t \in\,[\tau,T]}|S_\e(t)\varphi- (\bar{S}(t)\varphi^\wedge)\circ \Pi |_{H_\gamma}=
\lim_{\e\to 0}\sup_{t \in\,[\tau,T]}|(S_\e(t)\varphi)^\wedge- \bar{S}(t)\varphi^\wedge |_{\bar{H}_{{\gamma}}}=0.
\end{equation}
Here the choice of the weight $\gamma^\vee$ requires a non-trivial analysis, as it has to be admissible with respect to all semigroups $S_\e(t)$ and its projection $\gamma$ on $\Gamma$ has to be admissible with respect to $\bar{S}(t)$. Moreover, the space $H_\gamma=L^2(\rtwo,\gamma^\vee(x)dx)$ has to be properly projected into the space $\bar{H}_{{\gamma}}=L^2(\Gamma,\nu_\gamma)$, where $\nu_\gamma$ is the projection on $\Gamma$ of $\gamma^\vee(x)\,dx$ (see Subsection \ref{subsec2.3} and \cite{CerraiFreidlin2019} for all details). 

In \cite{CerraiFreidlin2019}, limit \eqref{intro6} is then used in \eqref{intro4}, to obtain limit \eqref{intro3}. Taking the limit, as $\e\to  0$, in the first two terms on the right-hand side in \eqref{intro4} is an immediate consequence of \eqref{intro6} and the Lipschitz-continuity of the non-linearity $b$. On the other hand, taking  the limit in the last term, the stochastic integral, requires some extra effort and, most importantly, requires the spatially homogeneous Wiener process $\mathcal{W}$ to be smooth. In particular, in \cite{CerraiFreidlin2019} it is assumed that its spectral measure is finite, so that $\mathcal{W}(t,\cdot)$ takes values in the functional space $H_\gamma$. Moreover, the proof of \eqref{intro3} requires  the condition
\begin{equation}
\label{intro10}
\frac{dT_{k}(z)}{dz}\neq0,\qquad(z,k)\in\Gamma.
\end{equation}
 This assumption is needed for the proof of \eqref{intro5}. Actually, \eqref{intro5} and hence \eqref{intro3} still stand if \eqref{intro10} is true except for a finite number of points on the graph $\Gamma$. But it is easy to check that important examples such as $H(x)=\vert x\vert ^2$, for which the graph is $[0,\infty)$ and the period $T(z)\equiv \pi$, are still excluded by such an assumption. 

In the first part of the present paper, we are interested in understanding if limit \eqref{intro3} is still valid, under the minimal assumptions on the spectral measure $\mu$ that assure the well posedness of equation \eqref{intro1} in the space $H_\gamma$ (see \cite{PeszatZabczyk1997} and Assumption \ref{Assumption 3}). In section \ref{section3}, assuming that the spectral measure to the singular spatially homogeneous Wiener process $\mathcal{W}(t)$ in $\rtwo$ has a density function $m$ in $L^p(\rtwo)$ for some $p\in(1,\infty)$ and \eqref{intro10} holds, we prove that \eqref{intro3} is still valid (see Theorem \ref{thm: Convergence of SPDE}). Actually, with little modification to our proof, we can further extend Theorem \ref{thm: Convergence of SPDE} to singular spatially homogeneous Wiener processes with spectral measure 
\[\mu=\mu_1+\mu_2,\]
where $\mu_1$ is a finite measure and $\mu_2$ has density function $m\in L^p(\rtwo)$ for some $p\in(1,\infty)$. This combines the results of \cite{CerraiFreidlin2019} and section \ref{section3}, and covers a large class of spatially homogeneous Wiener processes (for specific examples of the processes, we refer to \cite{PeszatZabczyk1997}).

To understand the convergence of the solutions to the SPDEs under singular spatially homogeneous Wiener process, in section \ref{section3} we first study the properties of the semigroups $S_\e(t)$ and their limit $\bar{S}(t)$. For this purpose, we introduce the kernel $G_\e(t,x,y)$ of the semigroup $S_\e(t)$, and we prove that
\begin{equation}
\label{intro7}
\sup_{\e>0}\,G_{\epsilon}(t,x,y)\leq\frac{C}{t}\exp\left(-\frac{(\sqrt{H(y)+1}-\sqrt{H(x)+1})^{2}}{4Ct}\right),\end{equation}
for any $(t,x,y)\in(0,T]\times\rtwo\times\rtwo$. Notice that due to \eqref{intro5} we have that the semigroup $\bar{S}(t)^\vee$, defined by
\[\bar{S}(t)^\vee\varphi(x):=(\bar{S}(t) \varphi^\wedge)\circ \Pi (x),\ \ \ \ x \in\,\rtwo,\ \ \ t\geq 0,\]
admits a kernel $\bar{G}(t,x,y)$, which satisfies estimate \eqref{intro7} as well.

Now, given a spatially homogeneous Wiener process $\mathcal{W}(t)$ in $\rtwo$ with spectral measure $m\in L^p(\rtwo)$ for some $p\in(1,\infty)$, we define  $\bar{\mathcal{W}}(t)$ to be the projection of $\mathcal{W}(t)$ on $\Gamma$. We denote by $\mathscr{S}_{q}'$ and $\bar{\mathscr{S}}_{q}'$ the reproducing kernels of the Wiener processes $\mathcal{W}(t)$ and $\bar{\mathcal{W}}(t)$, respectively. Using \eqref{intro7}, we prove that for every $T>0$ there exists a constant $C_T>0$ such that
\[
\sum_{j=1}^{\infty}\left|S_{\epsilon}(t)\left(\psi e_{j}\right)\right|_{H_{\gamma}}^{2}\leq C_T \Vert m\Vert_{L^p}\,t^{-(p-1)/p}\vert\psi\vert_{H_{\gamma}}^{2},\ \ \ \ \ t \in\,(0,T],
\]
and 
\[
\sum_{j=1}^{\infty}\left|\bar{S}(t)^{\vee}\left(\psi e_{j}\right)\right|_{H_{\gamma}}^{2}\leq C_T \Vert m\Vert_{L^p}\,t^{-(p-1)/p}\vert\psi\vert_{\bar{H}_{\gamma}}^{2},\ \ \ \ \ t \in\,(0,T],
\]
where $\{e_j\}_{j \in\,\mathbb{N}}$ is the orthonormal basis of $\mathscr{S}_{q}'$. This, in particular, allows us to   prove the well-posedness of the SPDEs \eqref{intro1} in $H_\gamma$. Next, for the convergence of the solutions $u_\e$ to $\bar{u}$, we need a stronger type of  convergence for the semigroups. In fact,  by using a suitable decomposition of the density function $m$ of the spectral measure, we prove that for any $\psi \in\,H_\gamma$
\begin{equation}
\label{intro11}
\lim_{\epsilon\rightarrow0}\sup_{t \in\,[\tau,T]}\,\sum_{j=1}^{\infty}\left|(S_{\epsilon}(t)-\bar{S}(t)^{\vee})\left(\psi e_{j}\right)\right|_{H_{\gamma}}^{2}=0.
\end{equation}
Thanks to \eqref{intro11}, we can then handle the convergence of the stochastic integral in \eqref{intro4} and prove \eqref{intro3}.

\medskip

In the second part of this paper we try to understand what happens when condition \eqref{intro10} does not hold. We recall that such condition
 is needed in both \cite{CerraiFreidlin2019} and section 3. This assumption is necessary for proving \eqref{intro5} and hence \eqref{intro3}, i.e. the convergence of  $S_\e(t)\varphi$ to $\bar{S}(t)^\vee \varphi$  for any fixed time $t>0$ and $\varphi \in\,H_\gamma$. Thanks to \eqref{intro12}, it is easy to see that \eqref{intro3} is equivalent to 
\begin{equation}
\label{intro15}
\lim_{\epsilon\rightarrow0}\sup_{t\in[\tau,T]}\left|\mathbf{E}_{x}u(X_{\epsilon}(t))-\bar{\mathbf{E}}_{\Pi(x)}u^{\wedge}(\bar{Y}(t))\right|=0.
\end{equation}
Without assuming \eqref{intro10}, clearly \eqref{intro15} is no longer true, as can be shown in the case   $H(x)=\vert x\vert ^2$. Nevertheless,  in section \ref{section4}, (see Theorem \ref{teo1}) we prove that a weaker type of convergence holds. Namely, 
\begin{equation}
\label{intro16}
\lim_{\epsilon\rightarrow0}\sup_{x\in K}\left|\int_{\tau}^{T}\left[\mathbf{E}_{x}u(X_{\epsilon}(t))-\bar{\mathbf{E}}_{\Pi(x)}u^{\wedge}(\bar{Y}(t))\right]\theta(t)dt\right|=0,
\end{equation}
for any compact set $K\subset \rtwo$, $u\in C_{b}(\rtwo)$ and $\theta\in C_{b}([\tau,T])$. 

Using \eqref{intro16}, we further study the convergence of the SPDEs. Since limit \eqref{intro16} is not preserved by the nonlinearities $b$ and $\sigma$, we restrict our consideration to the   linear case
\[\left\{\begin{array}{l}
\partial_{t}u_{\epsilon}(t,x)  =\frac{1}{2}\Delta u_{\epsilon}(t,x)+\frac{1}{\epsilon}\langle\nabla^{\perp}H(x),\nabla u_{\epsilon}(t,x)\rangle+\partial_{t}\mathcal{W}(t,x),\\
u_{\epsilon}(0,x)  =\varphi(x),\qquad x\in\mathbb{R}^{2}.
\end{array}\right.\]
In this case, we show   that
\[\lim_{\epsilon\rightarrow0}\mathbb{E}\left|\int_{0}^{T}\left[u_{\epsilon}(t)-\bar{u}(t)^{\vee}\right]\theta(t)dt\right|_{H_{\gamma}}^{q}=0,\]
(see  Theorem \ref{thmLinear}).

\medskip

The structure of the paper is as follows. In Section 2, we introduce the necessary notations and preliminaries from previous works. In Section 3 we prove our first main result stated in Theorem \ref{thm: Convergence of SPDE}. Under the assumption that the density of the spectral measure is in $L^p(\rtwo)$, for some 
$p\in(1,\infty)$, we first study the properties of the semigroups and the well posedness of the SPDEs. Then we prove Theorem \ref{thm: Convergence of SPDE}. In section 4, we prove that if condition \eqref{intro10} is not satisfied, then a weaker type of convergence of the semigroups $S_\e(t)$ holds. Next, we prove that this implies a weaker type of convergence for the solutions of a class of linear SPDEs.

\section{Notations and preliminaries}
In this section, we introduce the notations that will be used in later sections. For the completeness of the paper, we also briefly recall the results in previous works, which will be used in our work here. 

To study the convergence of the SPDEs, we first need to understand the convergence of the semigroups $S_\epsilon(t)$. In section \ref{F-WAveragingResult}, we briefly recall the Freidlin-Wentzell averaging results in \cite{FreidlinWentzell1998}. Then in section \ref{subsec2.3}, we recall some properties of the weighted spaces $H_\gamma$ and $\bar{H}_\gamma$ proved in \cite{CerraiFreidlin2019}, which will be used when studying the solutions to the SPDEs that fall in the weighted spaces. Finally, the random forcing  $\mathcal{W}(t,x)$ in the SPDEs are assumed to be {\em spatially homogeneous Wiener processes} with positive-symmetric spectral measure $\mu$ on $\rtwo$. We recall the main definitions and properties of the spatially homogeneous Wiener process in section \ref{SHWPs} following \cite{PeszatZabczyk1997}.

\subsection{The Hamiltonian and the associated graph }
\label{ss2.1}
Throughout this paper, we consider the Hamiltonian system
\begin{equation}
dx(t)=\nabla^{\perp}H(x(t)),\ \ \ \ x \in\,\rtwo,\label{eq: Hamilton's equation}
\end{equation}
where \[
\nabla^{\perp}H(x)=\left(\frac{\partial H(x)}{\partial x_{2}},-\frac{\partial H(x)}{\partial x_{1}}\right),\ \ \ \ x \in\,\rtwo.
\]
We shall assume that the Hamiltonian  $H$ satisfies the following conditions.
\begin{assumption}\label{Assumption 1} The Hamiltonian $H:\rtwo \rightarrow \mathbb{R}$ satisfies that
\begin{enumerate}
\item $H$ is four times continuously differentiable, with bounded second
derivatives. It has only a finite number of  critical points $x_{1},\cdots,x_{n}$,
and they are all non-degenerate. Moreover, 
\[H(x_{i})\neq H(x_{j}),
\ \ \ \mbox{if } \  i\neq j;\]
\item There exists $a>0$ such that for all $x\in\rtwo$ with $\vert x\vert$
large enough, we have
\[H(x)\geq a\vert x\vert^{2}, \qquad  \vert\nabla H(x)\vert\geq a \vert x\vert,\qquad  \Delta H(x)\geq a;\]
\item We have $\min_{x\in\rtwo}H(x)=0$.
\end{enumerate}
\end{assumption}

For any $z\geq0$, we denote by $C(z)$  the $z$-level set  of the Hamiltonian $H$
\[
C(z):=\left\{ x\in\rtwo:H(x)=z\right\} =\bigcup_{k=1}^{N(z)}C_{k}(z),
\]
where $C_k(z)$, $k=1,\ldots,N(z)$, are all the connected components of $C(z)$.
If we denote by $k(x)$ the number of the connected component of $C(H(x))$ containing $x$, then 
\[x(0)=x\Longrightarrow x(t) \in\,C_{k(x)}(H(x)),\ \ \ \ \ t\geq 0.\]
If $z$ is not a critical value, each $C_{k}(z)$ is a one periodic
trajectory of the Hamiltonian system (\ref{eq: Hamilton's equation}), and 
\begin{equation}
\label{cx5}
T_{k}(z):=\oint_{C_{k}(z)}\frac{1}{\vert\nabla H(x)\vert}dl_{z,k}
\end{equation}
 is the period of the motion along the level set $C_{k}(z)$ (here $dl_{z,k}$ is the length element
on $C_{k}(z)$).
Moreover, the probability measure 
\[
d\mu_{z,k}:=\frac{1}{T_{k}(z)}\frac{1}{\vert\nabla H(x)\vert}dl_{z,k}
\]
is invariant for the Hamiltonian equation (\ref{eq: Hamilton's equation})
on the level set $C_{k}(z)$

Now, by identifying the points on the same connected components $C_{k}(z)$,
we obtain a graph $\Gamma$. We denote by $\Pi:\rtwo\rightarrow\Gamma$
the identification map.
The graph $\Gamma$ consists of edges $I_{0},\cdots,I_{n}$ and vertices
$O_{0},\cdots,O_{m}$. The vertices are of two types,
external and internal vertices. External vertices correspond to local extrema of $H$, while internal vertices correspond to saddle points of $H$. Among external vertices, we denote by
$O_{0}$ the vertex corresponding to the point at infinity and by $I_{0}$
the only unbounded edge connected to $O_{0}$ (see Figure 1).
\begin{figure}
\centering
\includegraphics[height=8cm, width=12cm, bb=38 6 459 357]{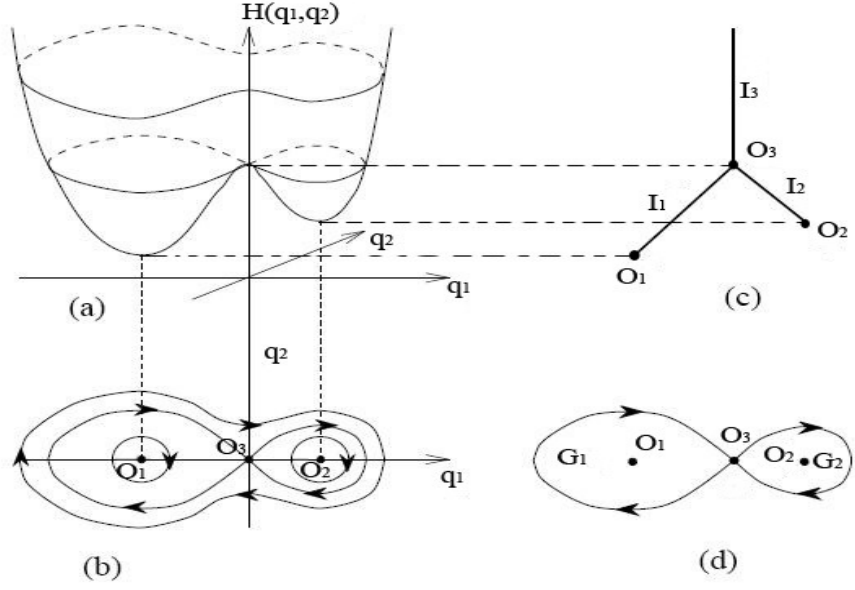}
\caption{The Hamiltonian, the level sets, the projection and the graph}
\end{figure}

On graph $\Gamma$,
a distance can be introduced as follows. If two points $y_{1}$ and
$y_{2}$ on the graph are on the same edge $I_{k}$, i.e. $y_{1}=(z_{1},k)$
and $y_{2}=(z_{2},k)$, then $d(y_{1},y_{2})=\vert z_{1}-z_{2}\vert$.
If $y_{1}$ and $y_{2}$ are on different edges, then 
\[
d(y_{1},y_{2})=\min\left\{ d(y_{1},O_{i_{1}})+d(O_{i_{1}},O_{i_{2}})+\cdots+d(O_{i_{j}},y_{2})\right\} ,
\]
where the minimum is taken over all possible paths from $y_{1}$ to
$y_{2}$, through every possible sequences of vertices $O_{i_{1}},\cdots,O_{i_{j}}$,
connecting $y_{1}$ and $y_{2}$.  Corresponding to each edge $I_{k}$, there
is an open set 
\[
G_{k}=\left\{ x\in\rtwo:\Pi(x)\in\mathring{I}_{k}\right\} .
\]
For $0\leq z_{1}<z_{2}$, we can define
\[
G(z_{1},z_{2})=\left\{ x\in\rtwo:z_{1}<H(x)<z_{2}\right\} ,
\]
and 
\[
G_{k}(z_{1},z_{2})=\left\{ x\in G_{k}:z_{1}<H(x)<z_{2}\right\} .
\]
Given $\delta>0$, we set 
\[
G(\pm\delta)=\bigcup_{i=1}^{m}G^{i}(\pm\delta)=\bigcup_{i=1}^{m}\left\{ x\in\rtwo:H(O_{i})-\delta<H(x)<H(O_{i})+\delta\right\} .
\]
For each vertex $O_{i}$, we denote
\[
D^{i}=\left\{ x\in\rtwo:\Pi(x)=O_{i}\right\} .
\]
In addition,  given any edge $I_{k}$ connected to the vertex $O_{i}$,
we denote 
\[
D_{k}^{i}=D^{i}\cap\bar{G}_{k}.\] If an edge
$I_{k}$ is connected to a vertex $O_{i}$, we write $I_{k}\sim O_{i}$.
For each $\delta>0$ and $I_{k}\sim O_{i}$, we set 
\[
D(\pm\delta)=\bigcup_{i=1}^{m}\bigcup_{k:I_{k}\sim O_{i}}D_{k}^{i}(\pm\delta)=\bigcup_{i=1}^{m}\bigcup_{k:I_{k}\sim O_{i}}\left\{ x\in G_{k}:d(\Pi(x),O_{i})=\delta\right\} .
\]
For further details, we refer to \cite[Chapter 8]{FreidlinWentzell1998} and \cite{CerraiFreidlin2019}.

\subsection{The Freidlin-Wentzell averaging result}\label{F-WAveragingResult}
With a change of time in \eqref{eq: Hamilton's equation}, for every $\e>0$, the function $x_\epsilon(t):=x(t/\epsilon)$ satisfies the equation
\begin{equation}
dx_{\epsilon}(t)=\frac{1}{\epsilon}\nabla^{\perp}H(x_{\epsilon}(t)).\label{eq: deterministic fast motion}
\end{equation}
Now, suppose $B_{t}$
is a standard Brownian motion on $\rtwo$. For every $\e>0$, we denote by $X_\e(t)$ the solution of the stochastic differential equation
\begin{equation}
dX_{\epsilon}(t)=\frac{1}{\epsilon}\nabla^{\perp}H(X_{\epsilon}(t))dt+dB(t).\label{eq: stocahstic fast motion}
\end{equation}
The second order differential operator associated with   (\ref{eq: stocahstic fast motion})
is
\[
L_{\epsilon}u(x)=\frac{1}{2}\Delta u(x)+\frac{1}{\epsilon}\langle\nabla^{\perp}H(x),\nabla u(x)\rangle.
\]
In what follows, we shall denote by $S_\e(t)$ the corresponding Markov transition semigroup. We recall that, for every Borel bounded $u:\mathbb{R}^2\to \mathbb{R}$, there is
\[S_\e(t)u(x)=\mathbf{E}_x u(X_\e(t)), \qquad \mbox{for } \ x \in\,\rtwo,\  t\geq 0.\]

Now, for every $x \in\,\mathbb{R}^2$, we consider the process $\Pi(X_\e(t))$, $t\geq 0 $, defined on the graph $\Gamma$, with $X_\e(0)=x$. In \cite[Chapter 8]{FreidlinWentzell1998}, is studied the limiting behavior, as $\e\downarrow 0$, of the process $\Pi(X_\e)$ in the space  $C([0,T];\Gamma)$, for any fixed $T>0$ and $x \in\,\mathbb{R}^2$. Namely, in \cite[Theorem 8.2.2]{FreidlinWentzell1998} it has been proved  that if the Hamiltonian $H$ satisfies Assumption \ref{Assumption 1}, the process $\Pi(X_\e)$, which describes the slow motion of $X_\e$, converges, in the sense of weak convergence of distributions in the space of continuous $\Gamma$-valued functions, to a diffusion process $\bar{Y}$ on $\Gamma$.

The process $\bar{Y}$ has been described in \cite[Theorem 8.2.1]{FreidlinWentzell1998}  in terms of its generator $\bar{L}$. The operator $(\bar{L}, D(\bar{L}))$ is a non-standard operator, which is given by suitable differential operators $\bar{L}_k$ within each edge $I_k$ of the graph and by certain gluing conditions at the interior vertices $O_i$ of the graph.
Moreover, it is degenerate at the vertices of the graph. Nevertheless, in  \cite[Theorem 8.2.1]{FreidlinWentzell1998} it is shown that it is the generator of a Markov process $\bar{Y}$ on the graph $\Gamma$. In what follows, we shall denote by $\bar{S}(t)$ the semigroup associated with $\bar{Y}$, defined by
\[\bar{S}(t) f(z,k)=\mathbf{E}_{(z,k)}f(\bar{Y}(t)),\]
for every bounded Borel function $f:\Gamma\to \mathbb{R}$.

\subsection{The weighted spaces $H_\gamma$ and $\bar{H}_\gamma$}
\label{subsec2.3}
For any $u:\rtwo\to \mathbb{R}$ and $0\leq z_{1}<z_{2}$, we have
\[
\int_{G(z_{1},z_{2})}u(x)dx=\sum_{k=0}^{n}\int_{I_{k, z_1, z_2}}\oint_{C_{k}(z)}\frac{u(x)}{\vert\nabla H(x)\vert}dl_{z,k}dz,
\]
where
\[I_{k, z_1, z_2}:=\left\{(z,k) \in\,I_k\,:\ z \in\,[z_1,z_2]\right\}.\]
In particular, it holds that
\[
\int_{\rtwo}u(x)dx=\sum_{k=0}^{n}\int_{I_{k}}\oint_{C_{k}(z)}\frac{u(x)}{\vert\nabla H(x)\vert}dl_{z,k}dz.
\]
In what follows, for every $u:\rtwo \to \mathbb{R}$, we shall define
\[
u^{\wedge}(z,k)=\frac{1}{T_{k}(z)}\oint_{C_{k}(z)}\frac{u(x)}{\vert\nabla H(x)\vert}dl_{z,k}=\oint_{C_{k}(z)}u(x)d\mu_{z,k},\ \ \ \ (z,k) \in\,\Gamma.
\]
Moreover, for every $f:\Gamma\to \mathbb{R}$, we shall define
\[
f^{\vee}(x)=f(\Pi(x)),\ \ \ \ \ x \in\rtwo.
\]
With these notations, given a positive continuous function $\gamma$ on the graph $\Gamma$,
if we assume that 
\[
\sum_{k=0}^{n}\int_{I_{k}}\gamma(z,k)T_{k}(z)dz<\infty,
\]
then $\gamma^{\vee}\in L^{1}(\rtwo)\cap C_{b}(\rtwo)$. For any such
function $\gamma$, we define 
\[
H_{\gamma}=\left\{ u:\rtwo\rightarrow\mathbb{R}:\vert u\vert_{H_{\gamma}}^{2}=\int_{\rtwo}\vert u(x)\vert^{2}\gamma^{\vee}(x)dx<\infty\right\} ,
\]
and 
\[
\bar{H}_{\gamma}=\left\{ f:\Gamma\rightarrow\mathbb{R}:\vert f\vert_{\bar{H}_{\gamma}}^{2}=\sum_{k=0}^{n}\int_{I_{k}}\vert f(z,k)\vert^{2}\gamma(z,k)T_{k}(z)dz<\infty\right\} .
\]
We recall the following results proved in \cite{CerraiFreidlin2019}.
\begin{prop}\label{prop:weightedSpaces}
For every $u\in H_{\gamma}$, we have $u^{\wedge}\in\bar{H}_{\gamma}$
and for every $f\in\bar{H}_{\gamma}$, we have $f^{\vee}\in H_{\gamma}$. Moreover,
\begin{equation}
\label{1}
\vert u^{\wedge}\vert_{\bar{H}_{\gamma}}\leq\vert u\vert_{H_{\gamma}}, \qquad  \vert f^{\vee}\vert_{H_{\gamma}}=\vert f\vert_{\bar{H}_{\gamma}}.\end{equation}
Finally, if $u\in H_{\gamma}$ and $f\in\bar{H}_{\gamma}$, then
\begin{equation}
\label{2}
\langle f,u^{\wedge}\rangle_{\bar{H}_{\gamma}}=\langle f^{\vee},u\rangle_{H_{\gamma}}, \qquad (f^{\vee}u)^{\wedge}=fu^{\wedge}.\end{equation}
\end{prop}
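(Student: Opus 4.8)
The backbone of the whole argument is the coarea-type formula already recorded in Subsection \ref{subsec2.3},
\[
\int_{\rtwo}u(x)\,dx=\sum_{k=0}^{n}\int_{I_{k}}\oint_{C_{k}(z)}\frac{u(x)}{\vert\nabla H(x)\vert}\,dl_{z,k}\,dz,
\]
combined with the single structural observation that any function of the form $g\circ\Pi$ is constant along each connected component $C_{k}(z)$ and therefore pulls out of the inner line integral. I would open with the isometry $\vert f^{\vee}\vert_{H_{\gamma}}=\vert f\vert_{\bar{H}_{\gamma}}$, which simultaneously certifies $f^{\vee}\in H_{\gamma}$ whenever $f\in\bar{H}_{\gamma}$. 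Applying the coarea formula to $u(x)=\vert f(\Pi(x))\vert^{2}\gamma(\Pi(x))$ and using that $f\circ\Pi$ and $\gamma\circ\Pi$ equal the constants $f(z,k)$ and $\gamma(z,k)$ on $C_{k}(z)$, the inner integral collapses to $\vert f(z,k)\vert^{2}\gamma(z,k)\,T_{k}(z)$ by the very definition \eqref{cx5} of $T_{k}(z)$, and summing over $k$ and integrating in $z$ reproduces exactly $\vert f\vert_{\bar{H}_{\gamma}}^{2}$.

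For the contraction $\vert u^{\wedge}\vert_{\bar{H}_{\gamma}}\leq\vert u\vert_{H_{\gamma}}$ the extra ingredient is that $d\mu_{z,k}$ is a probability measure on $C_{k}(z)$. Writing $u^{\wedge}(z,k)=\oint_{C_{k}(z)}u\,d\mu_{z,k}$ and applying Jensen's inequality to the convex map $t\mapsto t^{2}$ gives $\vert u^{\wedge}(z,k)\vert^{2}\leq\oint_{C_{k}(z)}\vert u\vert^{2}\,d\mu_{z,k}$. Multiplying by $\gamma(z,k)T_{k}(z)$, the factor $T_{k}(z)$ cancels the $1/T_{k}(z)$ built into $d\mu_{z,k}$; after pulling $\gamma(z,k)=\gamma^{\vee}(x)$ back inside the line integral I can run the coarea formula in reverse to recognize the right-hand side as $\int_{\rtwo}\vert u\vert^{2}\gamma^{\vee}\,dx=\vert u\vert_{H_{\gamma}}^{2}$. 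This bound also shows $u^{\wedge}\in\bar{H}_{\gamma}$.

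The two identities in \eqref{2} follow by the same mechanism. For the duality relation I would expand $\langle f^{\vee},u\rangle_{H_{\gamma}}=\int_{\rtwo}f(\Pi(x))u(x)\gamma(\Pi(x))\,dx$, apply the coarea formula, pull the constants $f(z,k)\gamma(z,k)$ out of the inner integral, and use $\oint_{C_{k}(z)}u/\vert\nabla H\vert\,dl_{z,k}=T_{k}(z)u^{\wedge}(z,k)$ to land precisely on $\langle f,u^{\wedge}\rangle_{\bar{H}_{\gamma}}$. The pointwise identity $(f^{\vee}u)^{\wedge}=fu^{\wedge}$ is then immediate: on $C_{k}(z)$ the factor $f^{\vee}=f(z,k)$ is constant and hence factors out of the averaging integral defining $(\,\cdot\,)^{\wedge}$.

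I do not expect a genuine obstacle here; the delicate points are purely bookkeeping. Throughout the norm computations I should justify the interchange of the sum over edges, the $z$-integration, and the line integrals by Tonelli, which is legitimate since all integrands are nonnegative there. The only mild concern is the behavior on the unbounded edge $I_{0}$ and near the vertices, where $\vert\nabla H\vert$ degenerates and $T_{k}(z)$ may blow up; but these are already absorbed into the coarea identity itself and into the standing finiteness hypothesis $\sum_{k}\int_{I_{k}}\gamma(z,k)T_{k}(z)\,dz<\infty$, which guarantees that every quantity appearing is finite for $f\in\bar{H}_{\gamma}$ and $u\in H_{\gamma}$.
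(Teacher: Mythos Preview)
Your argument is correct and is essentially the standard one: coarea formula plus the fact that $d\mu_{z,k}$ is a probability measure, with Jensen giving the contraction and the constancy of $f^{\vee}$ on connected level components giving the rest. Note, however, that the paper itself does not prove this proposition at all; it merely recalls it from \cite{CerraiFreidlin2019}, so there is no in-paper proof to compare against --- your write-up in fact supplies what the paper omits.
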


Now, for every linear operator $Q\in\mathcal{L}(H_{\gamma})$ and $A\in\mathcal{L}(\bar{H}_{\gamma})$, we define
\[Q^{\wedge}f:=(Qf^{\vee})^{\wedge}, \qquad A^{\vee}u:=(Au^{\wedge})^{\vee}\]
for $f\in\bar{H}_{\gamma}$ and $u\in H_{\gamma}$.
Moreover, It can be proved that
\begin{equation}
\Vert Q^{\wedge}\Vert_{\mathcal{L}(\bar{H}_{\gamma})}\leq\Vert Q\Vert_{\mathcal{L}(H_{\gamma})}, \qquad \Vert A^{\vee}\Vert_{\mathcal{L}(H_{\gamma})}\leq\Vert A\Vert_{\mathcal{L}(\bar{H}_{\gamma})}.
\end{equation}

\subsection{Spatially homogeneous Wiener processes }
\label{SHWPs}
Let $(\Omega,\mathcal{F},\mathbb{P})$ be a complete
probability space with filtration $(\mathcal{F}_{t})_{t\geq0}$ and let  $\mathscr{S}$ be the Schwartz space with its dual space $\mathscr{S}'$ (the space of Schwartz or tempered distributions). 
 We say that $\mathcal{W}(t)$ is a {\em Wiener process}, defined on $\Omega$
and taking values in $\mathscr{S}'$, if for each $\psi\in\mathscr{S}$, the mapping $t\rightarrow\langle\mathcal{W}(t),\psi\rangle$
defines a Wiener process. In particular, there exists a bilinear continuous symmetric
positive-definite form $Q:\mathscr{S}\times\mathscr{S}\rightarrow\mathbb{R}$
such that 
\[
\mathbb{E}\langle\mathcal{W}(t),\psi\rangle\langle\mathcal{W}(t),\varphi\rangle=t\wedge s\ Q(\psi,\varphi).
\]
In addition, we say that the Wiener process $\mathcal{W}(t)$ is {\em spatially
homogeneous} if the law of $\mathcal{W}(t)$ is invariant under all
translations $\tau_{h}(f)(x):=f(x+h)$ with $h\in\rtwo$. This implies that the bilinear form $Q$ must be of the form
\[
Q(\psi,\varphi)=\langle\Lambda,\psi\ast\varphi_{(s)}\rangle,
\]
where $\Lambda\in\mathscr{S}'$ is the Fourier transform of a
positive-symmetric tempered measure $\mu$ on $\mathbb{R}^{d}$, and
$\varphi_{(s)}(x)=\varphi(-x)$.  $\mu$ is called the {\em spectral measure } of $\mathcal{W}(t)$.

In what follows, we shall introduce in $\mathcal{S}$ the norm $q([\psi])=\sqrt{Q(\psi,\psi)}$  and we shall denote by $\mathscr{S}_{q}$
the completion of the set $\mathscr{S}/KerQ$ under the norm $q$.
The space $\mathscr{S}_{q}'$ is dual to $\mathscr{S}_{q}$ and can
be represented by 
\[
\mathscr{S}_{q}'=\{\xi\in\mathscr{S}':\exists C>0\  \mbox{with}\  \vert\langle\xi,\psi\rangle\vert\leq Cq([\psi]),\mbox{ for all }\psi\in\mathscr{S}\}.
\]
It turns out  that $\mathscr{S}_{q}'$ is the reproducing kernel
of the Wiener process $\mathcal{W}(t)$. 

Now, suppose $L_{(s)}^{2}(\mathbb{R}^{2},d\mu)$
is the space of all functions $u \in\,L^{2}(\mathbb{R}^2,d\mu)$  such that $u_{(s)}=u$.
As shown in \cite[Proposition 1.2]{PeszatZabczyk1997}, a distribution $\xi$ belongs to $\mathscr{S}_{q}'$
iff there exists a $u\in L_{(s)}^{2}(\mathbb{R}^2,d\mu)$ such that
$\xi=\widehat{u\mu}$. Moreover, for every $u, v \in\,L_{(s)}^{2}(\mathbb{R}^2,d\mu)$
 \begin{equation}
 \label{prop:spatially homogeneous WP property}
 \langle\widehat{u\mu},\widehat{v\mu}\rangle_{\mathscr{S}_{q}'}=\langle u,v\rangle_{L^{2}(\mathbb{R}^2,d\mu)}.\end{equation}

In what follows, we shall assume the following.

\begin{assumption}\label{Assumption 3}

The spectral measure $\mu$ of the spatially homogeneous
Wiener process has density function $m\in L^{p}(\rtwo)$, with $p\in(1,\infty)$.

\end{assumption}

 In particular, for any $u\in L_{(s)}^{2}(\mathbb{R}^2,d\mu)$
we have that 
\[\Vert um\Vert_{2p/(p+1)}\leq\Vert u\Vert_{L^{2}(\mathbb{R}^2,d\mu)}\Vert m\Vert_{p}^{1/2}.\]
Notice that $1\leq2p/(p+1)\leq2$, then by the Hausdorff-Young inequality we
have that 
\[
\Vert\widehat{um}\Vert_{2p/(p-1)}\leq C_{p}\Vert u\Vert_{L^{2}(\mathbb{R}^2,d\mu)}\Vert m\Vert_{p}^{1/2}.
\]
This implies that $\mathscr{S}_{q}'\subset L^{2p/(p-1)}(\mathbb{R}^2)$. Let  $\{u_{j}\}_{j\in\mathbb{N}}$
be an orthonormal basis of $L_{(s)}^{2}(\rtwo,\mu)$. According
to  \eqref{prop:spatially homogeneous WP property}, the functions $e_{j}:=\widehat{u_{j}m}$
define an orthonormal complete system in $\mathscr{S}_{q}'$, and the
spatially homogeneous Wiener processes can be represented as 
\[
\mathcal{W}(t,x)=\sum_{j=1}^{\infty}\widehat{u_{j}m}(x)\beta_{j}(t),
\]
where $\{\beta_{j}\}_{j\in\mathbb{N}}$ is a sequence of independent
Brownian motions. 
In particular, the corresponding Wiener process on the graph can
be written as
\begin{equation}
\label{cx6}
\bar{\mathcal{W}}(t,z,k)=\sum_{j=1}^{\infty}(\widehat{u_{j}m})^{\wedge}(z,k)\beta_{j}(t).
\end{equation}
We shall denote the reproducing kernel of $\bar{\mathcal{W}}$ by $\bar{\mathscr{S}_{q}'}$.

\section{The SPDE on $\mathbb{R}^2$ and the SPDE on the graph $\Gamma$}
\label{section3}

In this section, we consider the SPDE in $\mathbb{R}^2$
\begin{equation}
\label{SPDE}
\left\{\begin{array}{l}
\partial_{t}u_{\epsilon}(t,x)=L_\epsilon u_\epsilon(t,x)+b(u_{\epsilon}(t,x))+\sigma(u_{\epsilon}(t,x))\partial_{t}\mathcal{W}(t,x),\\
u_{\epsilon}(0,x)=\varphi(x),
\end{array}
\right.\end{equation}
 where we recall
\[L_\e u(x)=\frac 12 \Delta u(x)+\frac 1\epsilon \langle \bar{\nabla}H(x),\nabla u(x)\rangle,\ \ \ \ \ x \in\,\rtwo.\]

In what follows, we shall assume 
the following condition on the coefficients $b$ and $\sigma$.
\begin{assumption}\label{Assumption 4}
The nonlinearities  $b, \sigma:\mathbb{R} \to \mathbb{R}$ are  Lipschitz
continuous.

\end{assumption}
For every $u \in\,H_\gamma$ and $v \in\,\mathcal{S}^\prime_q$, we shall denote by
\[B(u)(x)=b(u(x)),\ \ \ \ [\Sigma(u)v](x)=\sigma(u(x)) v(x),\ \ \ \ \ \mbox{for }x \in\,\mathbb{R}^2.\]
With these notations, we say that an adapted process $u_{\epsilon} \in\,L^p(\Omega,C([0,T];H_\gamma))$ is a {\em mild solution} to equation  \eqref{SPDE} if it satisfies
\begin{equation}
u_{\epsilon}(t)=S_{\epsilon}(t)\varphi+\int_{0}^{t}S_{\epsilon}(t-s)B(u_{\epsilon}(s))ds+\int_{0}^{t}S_{\epsilon}(t-s)\Sigma(u_{\epsilon}(s))d\mathcal{W}(s).\label{eq: mild solution}
\end{equation}

If we denote by  $M$ the {\em multiplication operator} defined by
\[M(\psi)\xi=\psi \xi,\ \ \ \ \psi \in\,H_\gamma,\ \ \ \xi \in\,\mathcal{S}^\prime_q,\]
we have
\[\Sigma(u)v=M(\sigma(u))v.\]

As in  \cite{CerraiFreidlin2019}, where the noise in equation \eqref{SPDE} was a smooth Wiener process $\mathcal{W}$, having  finite spectral measure $\mu$,  we are here interested in studying the limiting behavior of $u_\e$, as $\e\to 0$,  in the space $L^p(\Omega;C([0,T];H_\gamma))$.  The limiting process will be the solution $\bar{u}$ of the following 
 SPDE on the graph $\Gamma$ 
\begin{equation}
\begin{cases}
\partial_{t}\bar{u}(t,z,k)=\bar{L}\bar{u}(t,z,k)+b(\bar{u}(t,z,k))+\sigma(\bar{u}(t,z,k))\partial_{t}\bar{\mathcal{W}}(t,z,k),\label{eq: SPDE on graph}\\
\bar{u}(0,z,k) =\varphi^{\wedge}(z,k),\qquad(z,k)\in\Gamma, 
\end{cases}
\end{equation}
where $\bar{\mathcal{W}}$ is the Wiener process on the graph $\Gamma$
corresponding to $\mathcal{W}$, as defined in \eqref{cx6}. We say $\bar{u}$ is a mild solution
to (\ref{eq: SPDE on graph}) if it is an adapted process in $L^p(\Omega;C([0,T];\bar{H}_\gamma))$ that satisfies the integral equation
\begin{equation}
\bar{u}(t)=\bar{S}(t)\varphi^{\wedge}+\int_{0}^{t}\bar{S}(t-s)B(\bar{u}(s))ds+\int_{0}^{t}\bar{S}(t-s)\Sigma(\bar{u}(s))d\bar{\mathcal{W}}(s).\label{eq: mild solution to the limit equation}
\end{equation}

\subsection{The semigroups $S_\e(t)$ and $\bar{S}(t)$}
\label{section3.1}

Here, we investigate the properties of the semigroups
$S_{\epsilon}(t)$ and their limit $\bar{S}(t)$. Firstly, we
review a few results obtained in previous works, where the following condition on the Hamiltonian $H$ is assumed.
\begin{assumption}\label{Assumption 2}

For any $(z,k)\in\Gamma$, we assume that
\[
\frac{dT_{k}(z)}{dz}\neq0.
\]

\end{assumption}

In \cite[Theorem A.2]{CerraiFreidlin2019} it
is shown that under Assumption
\ref{Assumption 2}, for any $u\in C_{b}(\rtwo)$, $x\in\rtwo$ and 
$0<\tau\leq T$ 
\begin{equation}
\lim_{\epsilon\rightarrow0}\sup_{t\in[\tau,T]}\vert S_{\epsilon}(t)u(x)-\bar{S}(t)^\vee u(x)\vert=0.\label{eq: semigroups weak convergence pointwise}
\end{equation}
Furthermore, in \cite[Corollary B.1]{CerraiFreidlin2019} it is shown 
that for any $u\in H_{\gamma}$ and $0<\tau\leq T$
\begin{align}
\lim_{\epsilon\rightarrow0}\sup_{t\in[\tau,T]}\left|S_{\epsilon}(t)u-\bar{S}(t)^{\vee}u\right|_{H_{\gamma}}^{2}=\lim_{\epsilon\rightarrow0}\sup_{t\in[\tau,T]}\left|\left(S_{\epsilon}(t)u\right)^{\wedge}-\bar{S}(t)u^{\wedge}\right|_{\bar{H}_{\gamma}}^{2}= & 0.\label{eq: semigroups weak convergence weighted space}
\end{align}

Suppose $G_{\epsilon}(t,x,y)$ is the kernel corresponding to $S_{\epsilon}(t)$,
i.e. 
\[
S_{\epsilon}(t)u(x)=\int_{\rtwo}G_{\epsilon}(t,x,y)u(y)dy,\ \ \ \ \ x \in\,\mathbb{R}^2.
\]
Limit (\ref{eq: semigroups weak convergence pointwise}) implies that
for any fixed $(t,x)$, kernels $G_{\epsilon}(t,x,\cdot)$ converge weakly
to some $\bar{G}(t,x,\cdot)$, which satisfies that
\[
\bar{S}(t)^{\vee}u(x)=\int_{\rtwo}\bar{G}(t,x,y)u(y)dy.
\]

Next, we determine the weighted space $H_{\gamma}$, on which the
semigroups $S_{\epsilon}(t)$ and $\bar{S}^\vee(t)$ are bounded. To determine the weight
$\gamma$, we have the following result from \cite[Proposition 4.1]{CerraiFreidlin2019}.
\begin{prop}
\label{prop: semigroup on the weight}There exists a strictly positive
decreasing function $h\in C^{2}([0,\infty))$ and a constant $C\geq 0$, such that the function
$\gamma:\Gamma\rightarrow(0,\infty)$ defined by $\gamma(z,k)=h(z)$,
for every $(z,k)\in\Gamma$, satisfies 
\begin{equation}
\int_{\rtwo}G_{\epsilon}(t,x,y)\gamma^{\vee}(x)dx\leq e^{C t}\gamma^{\vee}(y),\ \ \ \ y \in\,\rtwo,\label{eq: kernels applied to the weight}
\end{equation}
for every $t>0$. Moreover, for the same constant $C$, we
have that 
\begin{equation}
\vert S_{\epsilon}(t)u\vert_{H_{\gamma}}^{2}\leq e^{C t}\vert u\vert_{H_{\gamma}}^{2}.\label{eq: bounded semigroups}
\end{equation}
\end{prop}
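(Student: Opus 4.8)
The plan is to construct the weight $\gamma$ directly from a Lyapunov-type function adapted to the Hamiltonian and then verify the two claimed inequalities using the probabilistic representation $S_\epsilon(t)u(x)=\mathbf{E}_x u(X_\epsilon(t))$. The guiding idea is that the drift $\tfrac1\epsilon\nabla^\perp H$ is divergence-free, hence Lebesgue measure is invariant for the generator $L_\epsilon$ up to the Laplacian part, and the growth conditions in Assumption~\ref{Assumption 1}(2) let one dominate the generator on a function of $H$ alone. Concretely, I would look for $\gamma$ of the form $\gamma(z,k)=h(z)$ with $h\in C^2([0,\infty))$ strictly positive and decreasing, so that $\gamma^\vee(x)=h(H(x))$ is a function of $H$ only and is therefore automatically constant along the fast flow. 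Since $\langle\nabla^\perp H(x),\nabla(h\circ H)(x)\rangle=h'(H(x))\langle\nabla^\perp H,\nabla H\rangle=0$, the singular $1/\epsilon$ term annihilates $\gamma^\vee$, which is precisely what makes the bound uniform in $\epsilon$.

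The first main step is to produce $h$ satisfying a differential inequality ensuring $L_\epsilon^*$-supersolution behavior, equivalently $L_\epsilon\gamma^\vee\le C\gamma^\vee$. Computing $L_\epsilon(h\circ H)=\tfrac12\big(h''(H)|\nabla H|^2+h'(H)\Delta H\big)$, the $\epsilon$-dependent term having vanished, I want to choose $h$ so that
\[
\tfrac12 h''(z)\,|\nabla H(x)|^2+\tfrac12 h'(z)\,\Delta H(x)\le C\,h(z),\qquad H(x)=z.
\]
Using Assumption~\ref{Assumption 1}(2), namely $|\nabla H(x)|\ge a|x|$, $H(x)\ge a|x|^2$ and $\Delta H(x)\ge a$ for large $|x|$, together with $|\nabla H|^2$ being controlled above (the second derivatives of $H$ are bounded, so $|\nabla H(x)|\le C(1+|x|)$ and $|\nabla H(x)|^2\le C(1+H(x))$), I would take $h(z)$ decaying like $\exp(-\lambda\sqrt{z+1}\,)$ or a comparable profile. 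The square-root scale is natural here because it matches the Gaussian-in-$\sqrt H$ structure already anticipated in estimate~\eqref{intro7}: with $h(z)=e^{-\lambda\sqrt{z+1}}$ one has $h'(z)=-\tfrac{\lambda}{2}(z+1)^{-1/2}h(z)$ and $h''(z)=\big(\tfrac{\lambda^2}{4}(z+1)^{-1}+\tfrac{\lambda}{4}(z+1)^{-3/2}\big)h(z)$, so $h''|\nabla H|^2$ stays bounded by a constant multiple of $h$ while $h'\Delta H\le 0$ for large $|x|$, and the finitely many critical points and the bounded region $\{|x|\le R\}$ contribute only a bounded correction absorbed into $C$.

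Granting such an $h$, the second step is to pass from the pointwise generator inequality to the integrated bound~\eqref{eq: kernels applied to the weight}. I would apply $L_\epsilon$-calculus through the semigroup: since $\gamma^\vee\in L^1(\rtwo)\cap C_b(\rtwo)$ and $L_\epsilon\gamma^\vee\le C\gamma^\vee$, the function $t\mapsto e^{-Ct}\int G_\epsilon(t,x,y)\gamma^\vee(x)\,dx$ is nonincreasing, which after using the self-adjointness of the Laplacian and the divergence-free structure (so that $G_\epsilon(t,x,y)$ integrated against a function of $H$ in the $x$ variable solves the forward equation) yields exactly~\eqref{eq: kernels applied to the weight}. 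The cleanest route is to test the Fokker--Planck/Kolmogorov forward equation for the density $G_\epsilon(t,\cdot,y)$ against $\gamma^\vee$ and use the generator inequality, or equivalently to run an Itô/Dynkin argument on $e^{-Ct}\gamma^\vee(X_\epsilon(t))$. The third step, deriving~\eqref{eq: bounded semigroups}, is then a short consequence: by Jensen's inequality in the probabilistic representation, $|S_\epsilon(t)u(y)|^2\le S_\epsilon(t)(u^2)(y)=\int G_\epsilon(t,y,x)u^2(x)\,dx$, and integrating against $\gamma^\vee(y)\,dy$, then using the symmetry of the Brownian part together with~\eqref{eq: kernels applied to the weight} in the swapped variables, gives $|S_\epsilon(t)u|_{H_\gamma}^2\le e^{Ct}|u|_{H_\gamma}^2$.

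I expect the main obstacle to be the first step, namely checking the generator inequality uniformly near the critical points and on the compact core $\{|x|\le R\}$, where the favorable growth estimates of Assumption~\ref{Assumption 1}(2) are not available and where $\nabla H$ may vanish. There $|\nabla H|^2$ and $\Delta H$ are merely bounded, $h$, $h'$, $h''$ are bounded above and below away from zero, so the left-hand side is bounded and the right-hand side $C\,h(z)$ is bounded below; the inequality holds by enlarging $C$. Making this gluing between the compact region and the far field rigorous, while keeping $h$ globally $C^2$, strictly positive and decreasing, and simultaneously ensuring the integrability $\sum_k\int_{I_k}\gamma(z,k)T_k(z)\,dz<\infty$ required in Subsection~\ref{subsec2.3} (which constrains the decay rate $\lambda$ against the growth of the period $T_k$), is the delicate bookkeeping; but since~\eqref{intro7} already records the $\exp(-\,\cdot/\sqrt H)$ scale, the exponential-in-$\sqrt H$ choice should close the estimate.
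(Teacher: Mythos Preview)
Your plan is sound, and in fact the paper does not supply its own proof here: Proposition~\ref{prop: semigroup on the weight} is quoted verbatim from \cite[Proposition~4.1]{CerraiFreidlin2019}, so there is nothing in the present paper to compare against. The Lyapunov approach you sketch---take $\gamma^\vee=h\circ H$ so that the $\epsilon^{-1}$ drift annihilates it, choose $h$ so that $\tfrac12\Delta(h\circ H)\le C\,h\circ H$, and push this through the forward equation---is exactly the natural argument and is almost certainly what \cite{CerraiFreidlin2019} does. Your choice $h(z)=e^{-\lambda\sqrt{z+1}}$ is consistent with the scale the paper itself adopts later in Theorem~\ref{thm: Kernels upper bound}, where the auxiliary function $\psi(x)=\alpha\sqrt{H(x)+1}$ plays the same role.

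Two small remarks. First, in your third step you do not need any ``symmetry of the Brownian part'': once you have \eqref{eq: kernels applied to the weight}, Jensen plus Fubini gives \eqref{eq: bounded semigroups} directly, since after swapping the order of integration the inner integral is literally $\int G_\epsilon(t,y,x)\gamma^\vee(y)\,dy$, which is \eqref{eq: kernels applied to the weight} with the dummy variables relabeled. Second, your ``equivalently run It\^o on $e^{-Ct}\gamma^\vee(X_\epsilon(t))$'' gives $S_\epsilon(t)\gamma^\vee\le e^{Ct}\gamma^\vee$, which is the bound with the kernel integrated in its \emph{second} variable, not the first as in \eqref{eq: kernels applied to the weight}; this is harmless because $L_\epsilon\gamma^\vee=L_\epsilon^*\gamma^\vee$ (the drift term vanishes on $h\circ H$ and the drift is divergence-free), so both directions hold, but it is worth saying explicitly rather than calling the two routes equivalent.
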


\begin{rem}
The constant $C$ in Proposition \ref{prop: semigroup on the weight}
is independent of $\epsilon$. Therefore, by (\ref{eq: semigroups weak convergence pointwise})
and (\ref{eq: semigroups weak convergence weighted space}), for the
limit semigroup $\bar{S}(t)$, we also have that 
\begin{equation}
\int_{\rtwo}\bar{G}(t,x,y)\gamma^{\vee}(x)dx\leq e^{C t}\gamma^{\vee}(y),\ \ \ \ y \in\,\rtwo,
\label{eq: limit kernel applied to the weight}
\end{equation}
and 
\begin{equation}
\vert\bar{S}(t)^{\vee}u\vert_{H_{\gamma}}^{2}\leq e^{C t}\vert u\vert_{H_{\gamma}}^{2}\label{eq: bounded limit semigroups}
\end{equation}
for the same constant $C$. Throughout the rest of the paper,
we will always assume $\gamma$ to be a weight that satisfies (\ref{eq: kernels applied to the weight})-(\ref{eq: bounded limit semigroups})
as proved in Proposition \ref{prop: semigroup on the weight}.
\end{rem}

In addition to the weak convergence of the kernels $G_{\epsilon}(t,x,y)$
to $\bar{G}(t,x,y)$, we are now proving  the following uniform upper bound
to the kernels $G_{\epsilon}(t,x,y)$.
\begin{thm}
\label{thm: Kernels upper bound}Suppose the Hamiltonian $H$ satisfies
Assumption \ref{Assumption 1}. Then,  there exists a constant $C>0$ independent
of $\epsilon$ such that

\begin{equation}
G_{\epsilon}(t,x,y)\leq\frac{C}{t}\exp\left(-\frac{(\sqrt{H(y)+1}-\sqrt{H(x)+1})^{2}}{4Ct}\right),\label{eq: kernels pointwise upper bound}
\end{equation}
for any $(t,x,y)\in(0,T]\times\rtwo\times\rtwo$. Due to the weak convergence
of $G_{\epsilon}(t,x,y)$ to $\bar{G}(t,x,y)$, as $\epsilon\rightarrow0$,
the   same point-wise upper bound
as in (\ref{eq: kernels pointwise upper bound}) is valid for $\bar{G}(t,x,y)$.
\end{thm}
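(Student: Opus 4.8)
The plan is to prove \eqref{eq: kernels pointwise upper bound} by Davies' exponential perturbation method, but with a weight adapted to the Hamiltonian rather than to the Euclidean metric. Set $\psi(x):=\sqrt{H(x)+1}$, so that the exponent on the right-hand side of \eqref{eq: kernels pointwise upper bound} is exactly $(\psi(y)-\psi(x))^2$. Two elementary facts drive the whole argument. First, since $\nabla\psi=\nabla H/(2\sqrt{H+1})$ and $\langle\nabla^{\perp}H,\nabla H\rangle=0$, the advection field is tangent to the level sets of $\psi$, i.e.
\[
\langle\nabla^{\perp}H(x),\nabla\psi(x)\rangle=0,\qquad x\in\rtwo.
\]
Second, Assumption \ref{Assumption 1} (the bound $H(x)\ge a|x|^{2}$ for large $|x|$, together with $|\nabla H(x)|\le C(1+|x|)$, which follows from the boundedness of the second derivatives) guarantees
\[
\Lambda:=\sup_{x\in\rtwo}|\nabla\psi(x)|^{2}=\sup_{x\in\rtwo}\frac{|\nabla H(x)|^{2}}{4(H(x)+1)}<\infty.
\]

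First I would establish the on-diagonal bound $\sup_{\epsilon>0}\sup_{x,y}G_{\epsilon}(t,x,y)\le C/t$ on $(0,T]$, uniformly in $\epsilon$. Because $\nabla^{\perp}H$ is divergence-free, the advection term is skew-symmetric in $L^{2}$, so for $u(t)=S_{\epsilon}(t)f$ the energy identity reads $\frac{d}{dt}|u(t)|_{2}^{2}=-|\nabla u(t)|_{2}^{2}$, the singular $1/\epsilon$ term contributing nothing. Combined with the Nash inequality in $\rtwo$ and the $L^{1}$-contractivity of the Markov semigroup $S_{\epsilon}(t)$, this yields $\|S_{\epsilon}(t)\|_{L^{1}\to L^{2}}^{2}\le Ct^{-1}$; the same holds for the adjoint semigroup (generated by $\frac12\Delta-\frac1\epsilon\langle\nabla^{\perp}H,\nabla\cdot\rangle$, again divergence-free), and the semigroup property $S_\epsilon(t)=S_\epsilon(t/2)S_\epsilon(t/2)$ gives $\|S_{\epsilon}(t)\|_{L^{1}\to L^{\infty}}\le Ct^{-1}$, with a constant independent of $\epsilon$.

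Next I would run the Davies perturbation. For $\alpha\in\mathbb{R}$ consider the twisted semigroup $S_{\epsilon}^{\alpha}(t):=e^{\alpha\psi}S_{\epsilon}(t)e^{-\alpha\psi}$, whose generator $L_{\epsilon}^{\alpha}u=e^{\alpha\psi}L_{\epsilon}(e^{-\alpha\psi}u)$ computes to
\[
L_{\epsilon}^{\alpha}u=\tfrac12\Delta u-\alpha\langle\nabla\psi,\nabla u\rangle+\tfrac12\bigl(\alpha^{2}|\nabla\psi|^{2}-\alpha\Delta\psi\bigr)u+\tfrac1\epsilon\langle\nabla^{\perp}H,\nabla u\rangle,
\]
where the zeroth-order contribution of the drift, $-\tfrac\alpha\epsilon\,u\,\langle\nabla^{\perp}H,\nabla\psi\rangle$, vanishes by the orthogonality above—this is what keeps all constants uniform in $\epsilon$. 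Using again the divergence-free property to kill the $1/\epsilon$ term and integration by parts to cancel the $\Delta\psi$ contributions, the energy estimate becomes $\frac{d}{dt}|u(t)|_{2}^{2}\le-|\nabla u(t)|_{2}^{2}+\alpha^{2}\Lambda|u(t)|_{2}^{2}$. Feeding this modified inequality into the Nash argument (for $S_{\epsilon}^{\alpha}(t)$ and for the analogous twist of the adjoint) gives $\|S_{\epsilon}^{\alpha}(t)\|_{L^{1}\to L^{\infty}}\le Ct^{-1}e^{\alpha^{2}\Lambda t}$. Since the kernel of $S_{\epsilon}^{\alpha}(t)$ is $e^{\alpha\psi(x)}G_{\epsilon}(t,x,y)e^{-\alpha\psi(y)}$ and this norm is the supremum of the positive kernel, we get $G_{\epsilon}(t,x,y)\le \frac{C}{t}e^{\alpha^{2}\Lambda t}e^{-\alpha(\psi(x)-\psi(y))}$ for every $\alpha\in\mathbb{R}$; optimizing (taking $\alpha=(\psi(x)-\psi(y))/(2\Lambda t)$, of either sign) yields the factor $\exp(-(\psi(x)-\psi(y))^{2}/(4\Lambda t))$, which is \eqref{eq: kernels pointwise upper bound} after enlarging $C$ to dominate both the prefactor and $\Lambda$. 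The bound transfers to $\bar G$ at once: for nonnegative $\phi\in C_{c}(\rtwo)$, the weak convergence of $G_{\epsilon}(t,x,\cdot)$ to $\bar G(t,x,\cdot)$ together with the uniform bound gives $\int_{\rtwo}\bar G(t,x,y)\phi(y)\,dy=\lim_{\epsilon\to0}\int_{\rtwo}G_{\epsilon}(t,x,y)\phi(y)\,dy$, bounded above by the integral of the right-hand side of \eqref{eq: kernels pointwise upper bound} against $\phi$, whence the same pointwise estimate holds for $\bar G$.

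The main obstacle is twofold, and both difficulties are resolved by the adapted weight. The computation of $L_\epsilon^\alpha$ is formal because $\psi$ is unbounded, so $e^{\pm\alpha\psi}$ are unbounded multiplication operators; I would make it rigorous by replacing $\psi$ with bounded Lipschitz truncations $\psi_{n}=\min(\psi,n)$ (or smoothings thereof), noting $|\nabla\psi_{n}|\le|\nabla\psi|$ so that $\Lambda$ is not worsened, and passing to the limit $n\to\infty$ only at the end. The genuinely delicate point, however, is the uniformity in $\epsilon$: a naive weight such as the Euclidean distance would leave a term of order $1/\epsilon$ in $L_{\epsilon}^{\alpha}$ and destroy the estimate as $\epsilon\downarrow0$. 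It is precisely the identity $\langle\nabla^{\perp}H,\nabla\psi\rangle=0$—the fact that the fast advection moves along level sets of $H$, hence of $\psi$—that eliminates every $\epsilon$-dependence from both the diagonal bound and the Davies perturbation, and this orthogonality is the crux of the proof.
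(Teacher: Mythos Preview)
Your proposal is correct and follows essentially the same route as the paper: both arguments are the Davies exponential perturbation method with the adapted weight $\psi=\sqrt{H+1}$, both hinge on the orthogonality $\langle\nabla^{\perp}H,\nabla\psi\rangle=0$ (this is exactly the paper's observation that $I_4=0$) and on the boundedness of $|\nabla\psi|$, and both conclude via Nash's inequality, duality, and optimization over $\alpha$. The only cosmetic difference is that the paper establishes the differential inequality $\frac{d}{dt}\|z^{T}\|_{2p}^{2p}\le p^{2}\alpha^{2}C^{2}\|z^{T}\|_{2p}^{2p}-\|\nabla(z^{T})^{p}\|_{2}^{2}$ for \emph{all} $p\ge1$ and then invokes the Fabes--Stroock Moser iteration (their Lemma~1.4) to pass to $L^{\infty}$, whereas you sketch only the $L^{2}$ energy estimate and appeal to the Nash argument more globally; the paper's version is slightly more self-contained at the $L^{1}\to L^{\infty}$ step (it bypasses the need for a separate $L^{1}$ bound on the twisted semigroup), but the substance is identical.
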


Before proving Theorem \ref{thm: Kernels upper bound}, we introduce some notation and prove a preliminary lemma.

To this purpose, we define $\psi(x)=\alpha\sqrt{H(x)+1}$, where the constant $\alpha\in\mathbb{R}$ is 
to be determined later. Since $\vert\nabla H(x)\vert\leq C\vert x\vert$
and $H(x)+1\geq C\vert x\vert^{2}$, we have that
\[
\left|\nabla\psi(x)\right|=\frac{\alpha\,\left|\nabla H(x)\right|}{2\sqrt{H(x)+1}}\leq\alpha\, C
\]
for some $C>0$. Now, for any $\e>0$ we consider the linear problem 
\begin{equation}
\begin{cases}
\partial_{t}z_{\epsilon}(t,x)=\frac{1}{2}\Delta z_{\epsilon}(t,x)+\frac{1}{\epsilon}\langle\nabla^{\perp}H(x),\nabla z_{\epsilon}(t,x)\rangle,\\
z_{\epsilon}(0,x)=z_{0}(x),
\end{cases}\label{eq: PDE}
\end{equation}
whose solution has representation 
\[z_{\epsilon}(t,x)=\int_{\rtwo}G_{\epsilon}(t,x,y)z_{0}(y)dy.\]

Now, we introduce  the transformed kernel 
\[
G_{\epsilon}^{T}(t,x,y):=e^{-\psi(x)}G_{\epsilon}(t,x,y)e^{\psi(y)},
\]
and we define
\[
z_{\epsilon}^{T}(t,x):=\int_{\rtwo}G_{\epsilon}^{T}(t,x,y)z_{0}(y)dy.
\]
The following result holds.
\begin{lem}
For any $p\geq1$, we have  
\begin{equation}
\frac{d}{dt}\Vert z_{\epsilon}^{T}(t,\cdot)\Vert_{L^{2p}}^{2p}\leq p^{2}\alpha^{2}C^{2}\Vert z_{\epsilon}^{T}(t,\cdot)\Vert_{L^{2p}}^{2p}-\Vert\nabla(z_{\epsilon}^{T}(t,\cdot))^{p}\Vert_{L^2}^{2}.\label{eq: kernel upper bound inequality}
\end{equation}
\end{lem}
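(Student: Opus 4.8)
The plan is to derive the linear PDE satisfied by $z_\epsilon^T$ and then run a direct $L^{2p}$ energy estimate, exploiting the fact that the fast advection is divergence free and therefore drops out. Since $z_\epsilon^T(t,x)=e^{-\psi(x)}z_\epsilon(t,x)$, where $z_\epsilon$ solves \eqref{eq: PDE} with datum $e^{\psi}z_0$, I would first conjugate $L_\epsilon$ by $e^{\psi}$. Writing $w:=z_\epsilon^T$ and using that $\nabla\psi$ is parallel to $\nabla H$, so that $\langle\nabla^{\perp}H,\nabla\psi\rangle=0$, one finds
\[\partial_t w=\tfrac12\Delta w+\langle\nabla\psi,\nabla w\rangle+\tfrac12\bigl(\Delta\psi+|\nabla\psi|^2\bigr)w+\tfrac1\epsilon\langle\nabla^{\perp}H,\nabla w\rangle.\]
Because the kernel $G_\epsilon$ is nonnegative, it suffices to treat $z_0\geq0$, so that $w\geq0$ and $w^{p}$ is well defined; the general case follows by the usual decomposition. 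I would also record the bound $|\nabla\psi|\leq\alpha C$ already established, together with the fact that $\Delta\psi=\alpha\,\Delta\sqrt{H+1}$ is bounded under Assumption \ref{Assumption 1}, which legitimizes the integrations by parts below for sufficiently regular, fast decaying data (the general case by density, using the decay coming from the weight $\gamma^\vee$).

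Next I would differentiate $\Vert w(t,\cdot)\Vert_{L^{2p}}^{2p}=\int_{\rtwo}w^{2p}\,dx$ in time, substitute the equation, and treat the four terms separately. The advection term vanishes: since $\divg\nabla^{\perp}H=0$, an integration by parts gives $\frac1\epsilon\int_{\rtwo}\langle\nabla^{\perp}H,\nabla(w^{2p})\rangle\,dx=0$. The Laplacian term, after one integration by parts and the identity $\nabla(w^p)=p\,w^{p-1}\nabla w$, produces the dissipation $-\frac{2p-1}{p}\Vert\nabla(w^{p})\Vert_{L^2}^2$. Setting $v:=w^{p}$, the drift term yields $2\int_{\rtwo} v\langle\nabla\psi,\nabla v\rangle\,dx=-\int_{\rtwo}(\Delta\psi)v^2\,dx$, while the zeroth order term contributes $p\int_{\rtwo}(\Delta\psi+|\nabla\psi|^2)v^2\,dx$. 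Collecting these,
\[\frac{d}{dt}\int_{\rtwo}w^{2p}\,dx=-\frac{2p-1}{p}\Vert\nabla v\Vert_{L^2}^2+(p-1)\int_{\rtwo}(\Delta\psi)v^2\,dx+p\int_{\rtwo}|\nabla\psi|^2v^2\,dx.\]

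Finally I would estimate the two remaining integrals. The last one is immediate: $|\nabla\psi|^2\leq\alpha^2C^2$ gives $p\int_{\rtwo}|\nabla\psi|^2v^2\,dx\leq p\alpha^2C^2\Vert v\Vert_{L^2}^2$. For the $\Delta\psi$ term I integrate by parts once more and apply Cauchy--Schwarz and Young's inequality with a carefully tuned parameter: $(p-1)\int_{\rtwo}(\Delta\psi)v^2\,dx=-2(p-1)\int_{\rtwo} v\langle\nabla\psi,\nabla v\rangle\,dx\leq 2(p-1)\alpha C\int_{\rtwo} v|\nabla v|\,dx$, and choosing the Young parameter $\eta=(p-1)/p$ yields the bound $\frac{p-1}{p}\Vert\nabla v\Vert_{L^2}^2+p(p-1)\alpha^2C^2\Vert v\Vert_{L^2}^2$. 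The fraction $\frac{p-1}{p}$ of the dissipation is precisely what the Laplacian term can spare, since $-\frac{2p-1}{p}+\frac{p-1}{p}=-1$, and the potential contributions combine as $[p(p-1)+p]\alpha^2C^2=p^2\alpha^2C^2$. This gives exactly \eqref{eq: kernel upper bound inequality}, recalling $\Vert v\Vert_{L^2}^2=\Vert w\Vert_{L^{2p}}^{2p}$ and $v=(z_\epsilon^T)^{p}$.

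The main obstacle I anticipate is the exact bookkeeping rather than any deep difficulty: one must keep the two potential contributions together so that the $\Delta\psi$ terms carry the factor $(p-1)$ (which vanishes at $p=1$, reflecting an exact cancellation there) and then tune the single Young parameter to $\eta=(p-1)/p$ so that the surviving dissipation coefficient is exactly $1$ and the zeroth order coefficient is exactly $p^2\alpha^2C^2$. A secondary technical point is the rigorous justification of the integrations by parts and of differentiation under the integral sign, which I would handle by first assuming smooth nonnegative Schwartz data and passing to the limit, relying on the boundedness of $\nabla\psi$ and $\Delta\psi$ and on the a priori decay of $z_\epsilon$.
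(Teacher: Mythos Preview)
Your proof is correct and follows essentially the same route as the paper: both exploit that the divergence-free advection disappears in the $L^{2p}$ energy identity, both obtain the dissipation term $-\frac{2p-1}{p}\Vert\nabla(z_\epsilon^T)^p\Vert_{L^2}^2$, and both absorb the cross term coming from $\nabla\psi$ using exactly $\frac{p-1}{p}$ of that dissipation, leaving the coefficient $p^2\alpha^2C^2$ on the zeroth-order term. The only cosmetic differences are that you first write down the conjugated PDE for $w=z_\epsilon^T$ and then integrate by parts (landing on a $(p-1)\int(\Delta\psi)v^2$ term which you convert back to the cross term), whereas the paper works directly from the kernel representation and arrives at the cross term $-(2p-2)\int\langle\nabla v,\nabla\psi\rangle v$ immediately; and you close with Young's inequality while the paper completes the square $-\frac{p-1}{p}\int|\nabla v+p(\nabla\psi)v|^2\leq 0$, which is the same estimate.
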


\begin{proof}
By the definition of $z_{\epsilon}^{T}$ and $G_{\epsilon}^{T}$
\begin{align*}
\frac{d}{dt}\Vert z_{\epsilon}^{T}(t,\cdot)\Vert_{L^{2p}}^{2p} & =2p\int_{\rtwo}z_{\epsilon}^{T}(t,x)^{2p-1}\left(\int_{\rtwo}\frac{d}{dt}G_{\epsilon}^{T}(t,x,y)z_{0}(y)dy\right)dx\\
 & =2p\int_{\rtwo}z_{\epsilon}^{T}(t,x)^{2p-1}\left(\int_{\rtwo}e^{-\psi(x)+\psi(y)}\frac{1}{2}\Delta_{x}G_{\epsilon}^{T}(t,x,y)z_{0}(y)dy\right)dx\\
 & \quad +2p\int_{\rtwo}z_{\epsilon}^{T}(t,x)^{2p-1}\left(\int_{\rtwo}e^{-\psi(x)+\psi(y)}\frac{1}{\epsilon}\langle\nabla^{\perp}H(x),\nabla_{x}G_{\epsilon}^{T}(t,x,y)\rangle z_{0}(y)dy\right)dx.
\end{align*}
If we integrate by part  
\begin{align*}
\frac{d}{dt}\Vert z_{\epsilon}^{T}(t,\cdot)\Vert_{L^{2p}}^{2p} & =-2p\int_{\rtwo}\frac{1}{2}\left\langle \nabla\left(z_{\epsilon}^{T}(t,x)^{2p-1}e^{-\psi(x)}\right),\nabla\left(z_{\epsilon}^{T}(t,x)e^{\psi(x)}\right)\right\rangle dx\\
 & \quad+2p\int_{\rtwo}z_{\epsilon}^{T}(t,x)^{2p-1}e^{-\psi(x)}\frac{1}{\epsilon}\left\langle \nabla^{\perp}H(x),\nabla\left(z_{\epsilon}^{T}(t,x)e^{\psi(x)}\right)\right\rangle dx\\
 & =p\int_{\rtwo}z_{\epsilon}^{T}(t,x)^{2p}\vert\nabla\psi(x)\vert^{2}dx-(2p-2)\int_{\rtwo}\langle\nabla(z_{\epsilon}^{T}(t,x))^{p},\nabla\psi(x)\rangle z_{\epsilon}^{T}(t,x)^{p}dx\\
 & \quad-\frac{2p-1}{p}\int_{\rtwo}\vert\nabla(z_{\epsilon}^{T}(t,x))^{p}\vert^{2}dx+2p\int_{\rtwo}z_{\epsilon}^{T}(t,x)^{2p}\frac{1}{\epsilon}\langle\nabla^{\perp}H(x),\nabla\psi(x)\rangle dx\\
 & \quad +2p\int_{\rtwo}z_{\epsilon}^{T}(t,x)^{2p-1}\frac{1}{\epsilon}\langle\nabla^{\perp}H(x),\nabla z_{\epsilon}^{T}(t,x)\rangle dx.\\
 & =:I_{1}+I_{2}+I_{3}+I_{4}+I_{5}.
\end{align*}
The definition of $\nabla^{\perp}H(x)$ and $\psi$ clearly implies that $I_{4}=0$. Moreover,  since $\text{div} \nabla^{\perp}H=0$, we have
\[I_5= \frac{1}{\epsilon} \int_{\rtwo}\langle\nabla^{\perp}H(x),\nabla \left(z_{\epsilon}^{T}(t,x)^{2p}\right)\rangle dx=0.\]
Since $\vert\nabla\psi(x)\vert\leq\alpha C$
\begin{align*}
I_{2}+I_{3} & =-\int_{\rtwo}\vert\nabla(z_{\epsilon}^{T}(t,x))^{p}\vert^{2}dx+p(p-1)\int_{\rtwo}\vert\nabla\psi(x)\vert^{2}z_{\epsilon}^{T}(t,x)^{2p}dx\\
 & \quad-\frac{p-1}{p}\int_{\rtwo}\vert\nabla(z_{\epsilon}^{T}(t,x))^{p}+p\nabla\psi(x)z_{\epsilon}^{T}(t,x)^{p}\vert^{2}dx\\
 & \leq p(p-1)\alpha^{2}C^{2}\Vert z_{\epsilon}^{T}(t,\cdot)\Vert_{L^{2p}}^{2p}-\Vert\nabla(z_{\epsilon}^{T}(t,\cdot))^{p}\Vert_{L^2}^{2}.
\end{align*}
Together with 
\[
I_{1}\leq p\alpha^{2}C^{2}\Vert z_{\epsilon}^{T}(t,\cdot)\Vert_{L^{2p}}^{2p},
\]
we complete the proof.
\end{proof}

{\em Proof of Theorem \ref{thm: Kernels upper bound}.}
If we apply Nash's inequality and inequality (\ref{eq: kernel upper bound inequality})
as in \cite[Lemma 1.4]{FabesStroock1986}, we can deduce that 
\begin{equation}
\Vert z_{\epsilon}^{T}(t,\cdot)\Vert_{L^{\infty}}\leq\frac{C}{t^{1/2}}\exp(C\alpha^{2}t)\Vert z_{0}\Vert_{L^2}.\label{eq: inifity leq 2}
\end{equation}
The dual equation to (\ref{eq: PDE}) only changes the sign of the
first order coefficient $\nabla^{\perp}H(x)$, which means (\ref{eq: inifity leq 2})
is also true for the dual equation. By duality, there is
\[
\Vert z_{\epsilon}^{T}(t,\cdot)\Vert_{L^2}\leq\frac{C}{t^{1/2}}\exp(C\alpha^{2}t)\Vert z_{0}\Vert_{L^1}.
\]
Together with \eqref{eq: inifity leq 2}, this implies that
\[
\Vert z_{\epsilon}^{T}(t,\cdot)\Vert_{L^{\infty}}\leq\frac{C}{t}\exp(C\alpha^{2}t)\Vert z_{0}\Vert_{L^1}.
\]
By the definition of $z_{\epsilon}^{T}(t,x)$, we obtain that
\[
G_{\epsilon}^{T}(t,x,y)\leq\frac{C}{t}\exp(C\alpha^{2}t),
\]
and hence 
\[
G_{\epsilon}(t,x,y)\leq\frac{C}{t}\exp\left(C\alpha^{2}t+\alpha\sqrt{H(x)+1}-\alpha\sqrt{H(y)+1}\right)
\]
for any $\alpha\in\mathbb{R}$, $t\in(0,\infty)$ and $x,y\in\rtwo$.
Here we can take $\alpha=\frac{\sqrt{H(y)+1}-\sqrt{H(x)+1}}{2Ct}$
to minimize the right-hand side to obtain
\[
G_{\epsilon}(t,x,y)\leq\frac{C}{t}\exp\left(-\frac{(\sqrt{H(y)+1}-\sqrt{H(x)+1})^{2}}{4Ct}\right).
\]
\qed
\begin{cor}
\label{cor1}
Given any compact subset $K\subset\rtwo$, there exist $C$ and $R$
depending on $K$ such that 
\begin{equation}
\label{cx1}
\sup_{x \in\,K}G_{\epsilon}(t,x,y)\leq\begin{cases}
\frac{C}{t} & \vert y\vert\leq R\\
\frac{C}{t}\exp\left(-\frac{\vert y\vert^{2}}{Ct}\right) & \vert y\vert>R
\end{cases}
\end{equation}
for any $t\in(0,\infty)$ and $y\in\rtwo$. Moreover, the limit $\bar{G}(t,x,y)$ satisfies the same upper bound as in \eqref{cx1}.
\end{cor}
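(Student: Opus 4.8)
The plan is to derive Corollary \ref{cor1} as a direct consequence of the pointwise Gaussian-type bound \eqref{eq: kernels pointwise upper bound} from Theorem \ref{thm: Kernels upper bound}, combined with the coercivity of $H$ from Assumption \ref{Assumption 1}. The starting point is to take supremum over $x$ in a compact set $K$ in the estimate
\[
G_{\epsilon}(t,x,y)\leq\frac{C}{t}\exp\left(-\frac{(\sqrt{H(y)+1}-\sqrt{H(x)+1})^{2}}{4Ct}\right).
\]
Since $K$ is compact and $H$ is continuous, $H$ is bounded on $K$, say $H(x)\leq M_K$ for all $x\in K$, so $\sqrt{H(x)+1}\leq\sqrt{M_K+1}=:b_K$. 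This immediately gives the trivial bound $\sup_{x\in K}G_{\epsilon}(t,x,y)\leq C/t$ by dropping the nonpositive exponent, which handles the first case $|y|\leq R$ for any choice of $R$.

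For the second case $|y|>R$, the point is to show that when $y$ is far away the exponent is genuinely negative and can be made to dominate $|y|^2$. First I would use part (2) of Assumption \ref{Assumption 1}: there is $a>0$ so that $H(y)\geq a|y|^2$ for $|y|$ large enough, hence $\sqrt{H(y)+1}\geq\sqrt{a}\,|y|$ for $|y|$ beyond some threshold $R_0$. Choosing $R\geq\max\{R_0,\,2b_K/\sqrt{a}\}$ guarantees that for $|y|>R$ we have $\sqrt{H(y)+1}\geq\sqrt{a}\,|y|\geq 2b_K\geq 2\sqrt{H(x)+1}$ uniformly in $x\in K$, so that
\[
\sqrt{H(y)+1}-\sqrt{H(x)+1}\geq\tfrac{1}{2}\sqrt{H(y)+1}\geq\tfrac{\sqrt{a}}{2}\,|y|.
\]
Squaring and inserting into the exponent yields $(\sqrt{H(y)+1}-\sqrt{H(x)+1})^2\geq\frac{a}{4}|y|^2$, and therefore, after relabeling the constant $C$ appropriately (absorbing the factor $a/16$ into a new constant, still called $C$),
\[
\sup_{x\in K}G_{\epsilon}(t,x,y)\leq\frac{C}{t}\exp\left(-\frac{|y|^2}{Ct}\right),\qquad|y|>R.
\]

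The final sentence of the corollary follows without extra work: since $\bar{G}(t,x,y)$ satisfies the same pointwise upper bound \eqref{eq: kernels pointwise upper bound} as asserted in Theorem \ref{thm: Kernels upper bound}, the identical argument applied to $\bar{G}$ produces the same two-regime estimate. I expect no serious obstacle here; the entire content is bookkeeping with constants. The only point requiring a little care is making the choice of $R$ depend only on $K$ (through $b_K$) and on the structural constants $a,R_0$ coming from $H$, but not on $\epsilon$—which is automatic because the bound in Theorem \ref{thm: Kernels upper bound} is uniform in $\epsilon$—and ensuring that the threshold $R_0$ from the asymptotic lower bound $H(y)\geq a|y|^2$ is compatible with the elementary inequality $\sqrt{H(y)+1}\geq\sqrt{a}\,|y|$, which may require enlarging $R_0$ slightly.
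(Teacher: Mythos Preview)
Your proposal is correct and follows essentially the same approach as the paper's proof: both use the trivial bound $G_\epsilon(t,x,y)\leq C/t$ for $|y|\leq R$, and for $|y|>R$ both combine the boundedness of $H$ on $K$ with the coercivity $H(y)\geq a|y|^2$ from Assumption~\ref{Assumption 1} to show the exponent in \eqref{eq: kernels pointwise upper bound} dominates $|y|^2$. The paper merely expands the square $(\sqrt{H(y)+1}-\sqrt{H(x)+1})^2$ and asserts the final bound without spelling out the choice of $R$, whereas you make the threshold explicit; the content is the same.
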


\begin{proof}
Actually we always have $G_{\epsilon}(t,x,y)\leq\frac{C}{t}$. Then
since $H(x)$ is bounded for $x\in K$, by Assumption \ref{Assumption 1} we have that 
\[
G_{\epsilon}(t,x,y)\leq\frac{C}{t}\exp\left(-\frac{H(y)+H(x)+2-2\sqrt{H(y)+1}\sqrt{H(x)+1}}{4Ct}\right)\leq\frac{C}{t}\exp\left(-\frac{\vert y\vert^{2}}{Ct}\right)
\]
for large enough $\vert y\vert$.
\end{proof}
Now we consider the stochastic convolutions
\begin{equation}
\label{cx2}
\int_{0}^{t}S_{\epsilon}(t-s)\Sigma(u_{\epsilon}(s))d\mathcal{W}(s)
\end{equation}
and 
\begin{equation}
\label{cx2ForLimit}
\int_{0}^{t}\bar{S}(t-s)\Sigma(\bar{u}(s))d\bar{\mathcal{W}}(s),
\end{equation}
as in the definition of mild solutions, and show that they are well-defined in $H_\gamma$ and $\bar{H}_\gamma$, respectively,
when the spectral measure $\mu$ of the spatially homogeneous Wiener
process $\mathcal{W}$ has density function $m$ in $L^{p}(\rtwo)$, with $p\in(1,\infty)$.

To be more precise, as stated in the following lemma, 
we show that the semigroup $S_{\epsilon}(t)$ improves the regularity
of \eqref{cx2} following the proof of \cite[Proposition 4.1]{PeszatZabczyk1997}.

\begin{lem}
\label{lem: Semigroup property 1} Under Assumption \ref{Assumption 3}, $S_\e(t) M(\psi)$ are Hilbert-Schmidt operators from $\mathscr{S}_{q}'$ to $H_{\gamma}$, for all
 $\psi\in H_{\gamma}$. Moreover,
for each $T>0$ there exists a constant $C_T>0$ such that  
\[
\Vert S_{\epsilon}(t)M(\psi)\Vert_{L_{(HS)}(\mathscr{S}_{q}',H_{\gamma})}^{2}\leq C_T \Vert m\Vert_{L^p}\,t^{-(p-1)/p}\vert\psi\vert_{H_{\gamma}}^{2},\ \ \ \ \ t \in\,[0,T].
\]
\end{lem}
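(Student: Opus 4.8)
The plan is to follow the scheme of \cite[Proposition 4.1]{PeszatZabczyk1997}, but to replace the convolution structure of the flat heat semigroup (which is unavailable here, since $S_\e(t)$ is not translation invariant) by the uniform kernel bound of Theorem \ref{thm: Kernels upper bound} together with the admissibility relation \eqref{eq: kernels applied to the weight}. First I would reduce the Hilbert--Schmidt norm to a weighted integral of the kernel. Since $\{e_j=\widehat{u_jm}\}$ is an orthonormal basis of $\mathscr{S}_q'$ and $S_\e(t)M(\psi)e_j=S_\e(t)(\psi e_j)$, we have
\[
\Vert S_{\epsilon}(t)M(\psi)\Vert_{L_{(HS)}(\mathscr{S}_{q}',H_{\gamma})}^{2}=\int_{\rtwo}\Big(\sum_{j=1}^{\infty}\big|S_{\epsilon}(t)(\psi e_{j})(x)\big|^{2}\Big)\gamma^{\vee}(x)\,dx.
\]
Fixing $(t,x)$ and writing $g_{t,x}(y):=G_{\epsilon}(t,x,y)\psi(y)$, Parseval's identity gives $S_{\epsilon}(t)(\psi e_{j})(x)=\int_{\rtwo}g_{t,x}(y)\widehat{u_{j}m}(y)\,dy=\langle\widehat{g_{t,x}},u_{j}\rangle_{L^{2}(\rtwo,d\mu)}$, so that Bessel's inequality (recall $\{u_{j}\}$ is orthonormal in $L^{2}_{(s)}(\rtwo,d\mu)$) yields the pointwise bound $\sum_{j}|S_{\epsilon}(t)(\psi e_{j})(x)|^{2}\le\int_{\rtwo}|\widehat{g_{t,x}}(\xi)|^{2}m(\xi)\,d\xi$.

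Next, under Assumption \ref{Assumption 3}, I would combine H\"older's inequality (with exponents $p,p'$) and the Hausdorff--Young inequality. Setting $r:=2p/(p+1)\in(1,2)$, whose conjugate is $2p/(p-1)$, we get
\[
\int_{\rtwo}|\widehat{g_{t,x}}|^{2}m\,d\xi\le\Vert m\Vert_{L^{p}}\Vert\widehat{g_{t,x}}\Vert_{L^{2p/(p-1)}}^{2}\le C_{p}\Vert m\Vert_{L^{p}}\Vert g_{t,x}\Vert_{L^{r}}^{2},
\]
and therefore $\Vert S_{\epsilon}(t)M(\psi)\Vert_{HS}^{2}\le C_{p}\Vert m\Vert_{L^{p}}\int_{\rtwo}\big(\int_{\rtwo}G_{\epsilon}(t,x,y)^{r}|\psi(y)|^{r}\,dy\big)^{2/r}\gamma^{\vee}(x)\,dx$. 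The whole point of keeping the sub-$L^{2}$ exponent $r<2$ is that the kernel will then be integrated as $\int G_\epsilon^{r}\sim t^{1-r}$ rather than $\int G_\epsilon^{2}\sim t^{-1}$, which is what ultimately produces the correct, time-integrable singularity.

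The crux is to estimate this last weighted double integral so as to reproduce exactly $\vert\psi\vert_{H_{\gamma}}^{2}$ while generating the power $t^{-(p-1)/p}$. I would first apply Jensen's inequality (with the concave map $s\mapsto s^{r/2}$ against the probability measure $G_{\epsilon}^{r}\,dy/\!\int G_{\epsilon}^{r}\,dy$) to split off $|\psi|^{2}$:
\[
\Big(\int_{\rtwo}G_{\epsilon}^{r}|\psi|^{r}\,dy\Big)^{2/r}\le\Big(\int_{\rtwo}G_{\epsilon}^{r}\,dy\Big)^{2/r-1}\int_{\rtwo}G_{\epsilon}^{r}|\psi|^{2}\,dy.
\]
Then I would peel one factor of the kernel using $G_{\epsilon}\le C/t$ from Theorem \ref{thm: Kernels upper bound}, reserving the remaining single factor for the two exact, $\epsilon$-uniform identities available: the conservativeness $\int_{\rtwo}G_{\epsilon}(t,x,y)\,dy=S_{\epsilon}(t)1(x)=1$, which gives $\int_{\rtwo}G_{\epsilon}^{r}\,dy\le(C/t)^{r-1}$ uniformly in $x$; and, after exchanging the order of integration, the admissibility \eqref{eq: kernels applied to the weight}, which gives $\int_{\rtwo}G_{\epsilon}^{r}\gamma^{\vee}(x)\,dx\le(C/t)^{r-1}\int_{\rtwo}G_{\epsilon}\gamma^{\vee}(x)\,dx\le(C/t)^{r-1}e^{Ct}\gamma^{\vee}(y)$. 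Inserting both, the powers of $t$ add up to $(2/r-1)(1-r)+(1-r)=2/r-2=-(p-1)/p$, and the $y$-integral collapses to $\int_{\rtwo}|\psi|^{2}\gamma^{\vee}\,dy=\vert\psi\vert_{H_{\gamma}}^{2}$, which is the claimed bound with $C_{T}=Ce^{CT}$.

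The main obstacle is exactly this last step: obtaining the sharp, integrable singularity $t^{-(p-1)/p}$ (strictly milder than the naive $t^{-1}$) while recovering precisely the weighted norm and keeping every constant independent of $\epsilon$. Since the bound of Theorem \ref{thm: Kernels upper bound} is of radial (energy-difference) type and does not see the angular Gaussian decay, a direct pointwise estimate of $\int_{\rtwo}G_{\epsilon}^{r}\,dy$ would grow in $x$; the device that circumvents this is to transfer all excess powers of $G_{\epsilon}$ into the harmless scalar factor $(C/t)^{r-1}$ and to apply the two genuine mass-type relations (conservativeness and weight admissibility) to the single leftover factor, both of which hold uniformly in $\epsilon$. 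This makes the argument robust with respect to $\epsilon$ and delivers the stated bound on $[0,T]$.
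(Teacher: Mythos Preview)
Your proof is correct and follows essentially the same route as the paper: Parseval/Bessel to reduce the Hilbert--Schmidt sum to $\int|\widehat{G_{\epsilon}(t,x,\cdot)\psi}|^{2}m\,d\xi$, then H\"older plus Hausdorff--Young to land on $\Vert G_{\epsilon}(t,x,\cdot)\psi\Vert_{L^{2p/(p+1)}}^{2}$, and finally the combination of the pointwise kernel bound $G_{\epsilon}\le C/t$, Jensen, conservativeness, and the weight admissibility \eqref{eq: kernels applied to the weight} to recover $t^{-(p-1)/p}\vert\psi\vert_{H_{\gamma}}^{2}$. The only cosmetic differences are that the paper uses the basis $e_{j}=\widehat{v_{j}m^{1/2}}$ (with $\{v_{j}\}$ orthonormal in $L^{2}(dx)$) and applies the kernel bound \emph{before} Jensen (using the probability measure $G_{\epsilon}\,dy$ rather than $G_{\epsilon}^{r}\,dy/\!\int G_{\epsilon}^{r}\,dy$), but the resulting estimates and constants coincide.
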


\begin{proof}
Let $\{v_{j}\}$ be an orthonormal basis of $L_{(s)}^{2}(\rtwo,dx)$.
Thanks  to \eqref{prop:spatially homogeneous WP property}, if we define $e_{j}=\widehat{v_{j}m^{1/2}}$, we have that $\{e_j\}_{j \in\,\mathbb{N}}$ is an orthonormal complete system
in $\mathscr{S}_{q}'$.
Then, for any $\psi \in\,H_\gamma$
\begin{align*}
I & :=\sum_{j=1}^{\infty}\vert S_{\epsilon}(t)\psi e_{j}\vert_{H_{\gamma}}^{2}=\sum_{j=1}^\infty\left|S_{\epsilon}(t)\left[\psi(\widehat{m^{1/2}}\ast\widehat{v_{j}})\right]\right|_{H_{\gamma}}^{2}\\
 & =\sum_{j=1}^{\infty}\int_{\rtwo}\left[\int_{\rtwo}G_{\epsilon}(t,x,y)\psi(y)(\widehat{m^{1/2}}\ast\widehat{v_{j}})(y)dy\right]^{2}\gamma^{\vee}(x)dx\\
 & \leq\int_{\rtwo}\int_{\rtwo}\left|\widehat{m^{1/2}}\ast(G_{\epsilon}(t,x,\cdot)\psi)(y)\right|^{2}dy\,\gamma^{\vee}(x)dx\\
 & =\int_{\rtwo}\int_{\rtwo}\vert m^{1/2}(y)\vert^{2}\vert\widehat{(G_{\epsilon}(t,x,\cdot)\psi)}(y)\vert^{2}dy\,\gamma^{\vee}(x)dx\\
 & \leq\Vert m\Vert_{L^p}\int_{\rtwo}\Vert\widehat{(G_{\epsilon}(t,x,\cdot)\psi)}\Vert_{L^{2p^{\ast}}}^{2}\,\gamma^{\vee}(x)dx,
\end{align*}
where $p^{\ast}$ is the H\"older conjugate of $p$. The Hausdorff-Young
inequality implies that $\Vert\widehat{(G_{\epsilon}(t,x,\cdot)\psi)}\Vert_{L^{2p^{\ast}}}\leq\Vert(G_{\epsilon}(t,x,\cdot)\psi)\Vert_{L^{2p/(p+1)}}$
and we obtain 
\begin{align*}
I & \leq\Vert m\Vert_{L^p}\int_{\rtwo}\Vert(G_{\epsilon}(t,x,\cdot)\psi)\Vert_{L^{2p/(p+1)}}^{2}\gamma^{\vee}(x)dx\\
 & =\Vert m\Vert_{L^p}\int_{\rtwo}\left[\int_{\rtwo}\vert G_{\epsilon}(t,x,y)\psi(y)\vert^{2p/(p+1)}dy\right]^{(p+1)/p}\gamma^{\vee}(x)dx.
\end{align*}
By Theorem \ref{thm: Kernels upper bound}, we have that 
\begin{align*}
I & \leq C\Vert m\Vert_{L^p}t^{-(p-1)/p}\int_{\rtwo}\left[\int_{\rtwo}G_{\epsilon}(t,x,y)\vert\psi(y)\vert^{2p/(p+1)}dy\right]^{(p+1)/p}\gamma^{\vee}(x)dx.
\end{align*}
Since $2p/(p+1)\leq2$ and $G_{\epsilon}(t,x,y)dy$ is a probability
measure, 
\[
\left[\int_{\rtwo}G_{\epsilon}(t,x,y)\vert\psi(y)\vert^{2p/(p+1)}dy\right]^{(p+1)/p}\leq\left[\int_{\rtwo}G_{\epsilon}(t,x,y)\vert\psi(y)\vert^{2}dy\right],
\]
and then, using Proposition \ref{prop: semigroup on the weight}, we conclude
\begin{align*}
I & \leq C\Vert m\Vert_{L^p}t^{-(p-1)/p}\int_{\rtwo}\left[\int_{\rtwo}G_{\epsilon}(t,x,y)\vert\psi(y)\vert^{2}dy\right]\gamma^{\vee}(x)dx\\
 & =C\Vert m\Vert_{L^p}t^{-(p-1)/p}\int_{\rtwo}\int_{\rtwo}G_{\epsilon}(t,x,y)\gamma^{\vee}(x)dx\vert\psi(y)\vert^{2}dy\\
 & \leq C\Vert m\Vert_{L^p}t^{-(p-1)/p}e^{CT}\int_{\rtwo}\vert\psi(y)\vert^{2}\gamma^{\vee}(y)dy\\
 & =Ce^{CT}\Vert m\Vert_{L^p}t^{-(p-1)/p}\vert\psi\vert_{H_{\gamma}}^{2}.
\end{align*}
\end{proof}
Now we consider the limit semigroup $\bar{S}(t)$ and show that an analogous result holds.
\begin{lem}
\label{lem: limit semigroup property 1}Under Assumption \ref{Assumption 3}, $\bar{S}(t) M(\psi)$ are Hilbert–Schmidt operators from $\bar{\mathscr{S}}_{q}'$ to $\bar{H}_{\gamma}$. For each $T>0$ there exists a constant $C_T>0$ such that for all
 $\psi\in\bar{H}_{\gamma}$  
\[
\Vert\bar{S}(t)M(\psi)\Vert_{L_{(HS)}(\bar{\mathscr{S}_{q}'},\bar{H}_{\gamma})}^{2}\leq C_T\Vert m\Vert_{L^p}\,t^{-(p-1)/p}\vert\psi\vert_{\bar{H}_{\gamma}}^{2},\ \ \ \ \ t \in\,[0,T].
\]
\end{lem}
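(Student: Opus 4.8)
The plan is to reduce this statement to the already established Lemma~\ref{lem: Semigroup property 1} for the lifted semigroup $\bar S(t)^\vee$, exploiting that, by Theorem~\ref{thm: Kernels upper bound}, the kernel $\bar G(t,x,y)$ of $\bar S(t)^\vee$ satisfies exactly the same Gaussian-type upper bound \eqref{eq: kernels pointwise upper bound} as $G_\epsilon(t,x,y)$, and that it also obeys the weight inequality \eqref{eq: limit kernel applied to the weight}. The bridge between the graph and the plane will be the intertwining relations of Proposition~\ref{prop:weightedSpaces}.

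First I would fix $\psi\in\bar H_\gamma$ and rewrite the Hilbert--Schmidt norm as a series. By \eqref{cx6} the projected noise is $\bar{\mathcal W}(t)=\sum_j e_j^\wedge\beta_j(t)$, where I take $e_j=\widehat{v_jm^{1/2}}$, the orthonormal basis of $\mathscr S'_q$ used in Lemma~\ref{lem: Semigroup property 1} (any orthonormal basis of $\mathscr S'_q$ produces the same process $\bar{\mathcal W}$, up to an orthogonal change of the driving Brownian motions). Since $\bar{\mathscr S}'_q$ is the image of $\mathscr S'_q$ under the projection $\wedge$ endowed with the quotient norm, the family $\{e_j^\wedge\}$ is a Parseval frame of $\bar{\mathscr S}'_q$, equivalently the Hilbert--Schmidt norm of an operator out of $\bar{\mathscr S}'_q$ factors through the synthesis operator $K\colon\ell^2\to\bar{\mathscr S}'_q$, $K\delta_j:=e_j^\wedge$. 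Hence
\[
\Vert\bar S(t)M(\psi)\Vert_{L_{(HS)}(\bar{\mathscr S}'_q,\bar H_\gamma)}^2=\sum_{j=1}^\infty\bigl|\bar S(t)\bigl(\psi\,e_j^\wedge\bigr)\bigr|_{\bar H_\gamma}^2 ,
\]
regardless of whether $\{e_j^\wedge\}$ is orthonormal in $\bar{\mathscr S}'_q$. I expect this bookkeeping of the projected reproducing kernel $\bar{\mathscr S}'_q$ to be the one genuinely delicate point, and I would justify it either through the quotient-norm/Parseval-frame description just given or, equivalently, through the It\^o isometry for the series-represented Wiener process $\bar{\mathcal W}$.

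Next I would transfer each summand to the plane using Proposition~\ref{prop:weightedSpaces}. By \eqref{2} we have $\psi\,e_j^\wedge=(\psi^\vee e_j)^\wedge$, and from the definition $\bar S(t)^\vee u=(\bar S(t)u^\wedge)^\vee$ together with $(f^\vee)^\wedge=f$ one gets $\bar S(t)u^\wedge=(\bar S(t)^\vee u)^\wedge$ for every $u\in H_\gamma$. Applying this with $u=\psi^\vee e_j$ yields $\bar S(t)(\psi\,e_j^\wedge)=\bigl(\bar S(t)^\vee(\psi^\vee e_j)\bigr)^\wedge$, whence by the contraction estimate in \eqref{1},
\[
\bigl|\bar S(t)\bigl(\psi\,e_j^\wedge\bigr)\bigr|_{\bar H_\gamma}\le\bigl|\bar S(t)^\vee\bigl(\psi^\vee e_j\bigr)\bigr|_{H_\gamma}.
\]
Summing over $j$ reduces the problem to estimating $\sum_j|\bar S(t)^\vee(\psi^\vee e_j)|_{H_\gamma}^2$.

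Finally I would run the computation in the proof of Lemma~\ref{lem: Semigroup property 1} verbatim, with $\bar S(t)^\vee$ (kernel $\bar G$) in place of $S_\epsilon(t)$ and $\psi^\vee\in H_\gamma$ in place of $\psi$. The only properties of the kernel used there are the pointwise bound \eqref{eq: kernels pointwise upper bound}, the fact that $\bar G(t,x,\cdot)\,dy$ is a probability measure (true since $\bar S(t)1=1$), and the weight inequality, all of which hold for $\bar G$ by Theorem~\ref{thm: Kernels upper bound} and \eqref{eq: limit kernel applied to the weight}. This gives
\[
\sum_j\bigl|\bar S(t)^\vee\bigl(\psi^\vee e_j\bigr)\bigr|_{H_\gamma}^2\le C_T\Vert m\Vert_{L^p}\,t^{-(p-1)/p}\,|\psi^\vee|_{H_\gamma}^2 ,
\]
and, since the left-hand side is finite, $\bar S(t)M(\psi)$ is Hilbert--Schmidt. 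The proof then closes upon invoking the isometry $|\psi^\vee|_{H_\gamma}=|\psi|_{\bar H_\gamma}$ from \eqref{1}, which turns the right-hand side into $C_T\Vert m\Vert_{L^p}\,t^{-(p-1)/p}\,|\psi|_{\bar H_\gamma}^2$.
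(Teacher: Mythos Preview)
Your proof is correct and follows essentially the same route as the paper: lift to $\bar S(t)^\vee$ via Proposition~\ref{prop:weightedSpaces}, then rerun the Lemma~\ref{lem: Semigroup property 1} argument using the kernel bound from Theorem~\ref{thm: Kernels upper bound} and the weight inequality~\eqref{eq: limit kernel applied to the weight}, and close with $|\psi^\vee|_{H_\gamma}=|\psi|_{\bar H_\gamma}$. Two small remarks: (i) your careful treatment of $\{e_j^\wedge\}$ as a Parseval frame for $\bar{\mathscr S}'_q$ is a point the paper leaves implicit; (ii) at the step $|\bar S(t)(\psi e_j^\wedge)|_{\bar H_\gamma}\le|\bar S(t)^\vee(\psi^\vee e_j)|_{H_\gamma}$ you actually have equality, since $\bar S(t)^\vee(\psi^\vee e_j)$ is of the form $f^\vee$ and $|f^\vee|_{H_\gamma}=|f|_{\bar H_\gamma}$---this is what the paper uses, though your inequality suffices.
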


\begin{proof}
We have
\[I :=\sum_{j=1}^{\infty}\vert\bar{S}(t)\psi e_{j}^{\wedge}\vert_{\bar{H}_{\gamma}}^{2}=\sum_{j=1}^\infty\left|\bar{S}(t)\left[\psi(\widehat{m^{1/2}}\ast\widehat{v_{j}})^{\wedge}\right]\right|_{\bar{H}_{\gamma}}^{2}.\]
Then by Proposition \ref{prop:weightedSpaces} and the definition of $\bar{S}(t)^\vee$ and $\bar{G}(t,x,y)$,
\begin{align*}
I &  =\sum_{j=1}^{\infty}\left|\bar{S}(t)^{\vee}\left[\psi^{\vee}(\widehat{m^{1/2}}\ast\widehat{v_{j}})\right]\right|_{H_{\gamma}}^{2}\\
 & =\sum_{j=1}^\infty\int_{\rtwo}\left[\int_{\rtwo}\bar{G}(t,x,y)\psi^{\vee}(y)(\widehat{m^{1/2}}\ast\widehat{v_{j}})(y)dy\right]^{2}\gamma^{\vee}(x)dx\\
 & \leq\int_{\rtwo}\int_{\rtwo}\left|\widehat{m^{1/2}}\ast(\bar{G}(t,x,\cdot)\psi^{\vee})(y)dy\right|^2\gamma^{\vee}(x)dx\\
 & =\int_{\rtwo}\int_{\rtwo}\vert m^{1/2}(y)\vert^{2}\vert\widehat{(\bar{G}(t,x,\cdot)\psi^{\vee})}(y)\vert^{2}dy\gamma^{\vee}(x)dx\\
 & \leq\Vert m\Vert_{L^p}\int_{\rtwo}\Vert\widehat{(\bar{G}(t,x,\cdot)\psi^{\vee})}\Vert_{L^{2p^{\ast}}}^{2}\gamma^{\vee}(x)dx.
\end{align*}
Now, with the same arguments used in   the proof of Lemma \ref{lem: Semigroup property 1},
using (\ref{eq: limit kernel applied to the weight}) and the bound $\bar{G}(t,x,y)\leq C\,t^{-1}$,
we have that 
\[
I\leq Ce^{CT}\Vert m\Vert_{L^p}t^{-(p-1)/p}\vert\psi^\vee\vert_{H_{\gamma}}^{2}=Ce^{CT}\Vert m\Vert_{L^p}t^{-(p-1)/p}\vert\psi\vert_{\bar{H}_{\gamma}}^{2},
\]
where the last equality follows from Proposition \ref{prop:weightedSpaces}.
\end{proof}

Using classical arguments, in section \ref{ExistenceAndUniqueness} we will show that Lemma \ref{lem: Semigroup property 1} and Lemma \ref{lem: limit semigroup property 1} imply that SPDEs \eqref{SPDE} and \eqref{eq: SPDE on graph} admit a unique mild solution. 

Next, to prove the convergence of mild solutions $u_{\epsilon}$ of equations
(\ref{SPDE}) to the mild solution $\bar{u}$ of equation (\ref{eq: SPDE on graph}), we show that the three terms in the definition of mild solutions \eqref{eq: mild solution} converge to that of \eqref{eq: mild solution to the limit equation}. Among these three terms, the most difficult one is the convergence of the stochastic integrals \eqref{cx2} to \eqref{cx2ForLimit}, for which we will need the following approximation result.
\begin{lem}
\label{lem: convergenc of the infinite sum}Given any $\psi\in H_{\gamma}$,
for any fixed $0<\tau < T$
\begin{equation}
\label{cX2}
\lim_{\epsilon\rightarrow0}\sup_{t \in\,[\tau,T]}\,\sum_{j=1}^{\infty}\left|(S_{\epsilon}(t)-\bar{S}(t)^{\vee})\left(\psi e_{j}\right)\right|_{H_{\gamma}}^{2}=0.
\end{equation}
\end{lem}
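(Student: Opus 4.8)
The plan is to prove \eqref{cX2} by exploiting the already-established pointwise convergence of the semigroups together with a uniform domination that lets me pass the limit inside the infinite sum. The key quantitative tool is the bound proved in Lemma \ref{lem: Semigroup property 1} (and its limit version, Lemma \ref{lem: limit semigroup property 1}), which gives
\[
\sum_{j=1}^{\infty}\left|S_{\epsilon}(t)(\psi e_{j})\right|_{H_{\gamma}}^{2}\leq C_T\Vert m\Vert_{L^p}\,t^{-(p-1)/p}\vert\psi\vert_{H_{\gamma}}^{2},
\]
uniformly in $\epsilon$, and the analogous bound for $\bar{S}(t)^{\vee}$. Since the tail of the series is uniformly small, it will suffice to control each term $\bigl|(S_{\epsilon}(t)-\bar{S}(t)^{\vee})(\psi e_{j})\bigr|_{H_{\gamma}}^{2}$ and handle the interchange of limit and sum carefully.

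First I would fix $\psi\in H_{\gamma}$ and the orthonormal system $\{e_j\}$ with $e_j=\widehat{u_j m}$ as in Subsection \ref{SHWPs}. For each fixed $j$, the function $\psi e_j$ belongs to $H_{\gamma}$ (this is exactly what Lemmas \ref{lem: Semigroup property 1}--\ref{lem: limit semigroup property 1} verify, since the individual summands are finite), so limit \eqref{eq: semigroups weak convergence weighted space} applies and gives
\[
\lim_{\epsilon\rightarrow0}\sup_{t\in[\tau,T]}\left|(S_{\epsilon}(t)-\bar{S}(t)^{\vee})(\psi e_{j})\right|_{H_{\gamma}}^{2}=0
\]
for each individual $j$. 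The difficulty is that this is only a termwise statement; I need it to hold after summing over all $j$ and taking the supremum over $t$.

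The main obstacle is therefore the uniform-in-$j$ control needed to justify the interchange of the limit with the infinite sum. The plan is a standard split into a finite head and an infinite tail. For the tail $\sum_{j>N}$, I would estimate
\[
\sum_{j>N}\left|(S_{\epsilon}(t)-\bar{S}(t)^{\vee})(\psi e_{j})\right|_{H_{\gamma}}^{2}\leq 2\sum_{j>N}\left|S_{\epsilon}(t)(\psi e_{j})\right|_{H_{\gamma}}^{2}+2\sum_{j>N}\left|\bar{S}(t)^{\vee}(\psi e_{j})\right|_{H_{\gamma}}^{2},
\]
and I expect the right-hand side to be made uniformly small in $\epsilon$ and $t\in[\tau,T]$ by choosing $N$ large. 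This requires more than the total-sum bounds above: I need the \emph{tail} to be small, which follows because both bounds are derived from a common Hilbert--Schmidt estimate whose convergence is controlled by $\sum_j |\psi e_j|^2$-type quantities. Concretely, retracing the proof of Lemma \ref{lem: Semigroup property 1}, the partial-sum estimates $\sum_{j>N}|S_\epsilon(t)(\psi e_j)|_{H_\gamma}^2$ are dominated, uniformly in $\epsilon$ and $t\in[\tau,T]$, by $C_{\tau,T}\Vert m\Vert_{L^p}\sum_{j>N}\Vert u_j m^{1/2}\psi^{\vee}\Vert_{\cdots}^2$, a convergent series whose tail vanishes as $N\to\infty$; the same argument applied via Lemma \ref{lem: limit semigroup property 1} handles the $\bar S(t)^\vee$ tail.

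Once the tail is controlled, I would fix $N$ large enough that both tails are below a prescribed $\eta>0$ uniformly in $\epsilon$ and $t$. For the finite head $\sum_{j\leq N}$, I apply the termwise convergence \eqref{eq: semigroups weak convergence weighted space} to each of the finitely many terms; since a finite sum of suprema of quantities each tending to $0$ tends to $0$, I obtain
\[
\limsup_{\epsilon\rightarrow0}\sup_{t\in[\tau,T]}\sum_{j=1}^{\infty}\left|(S_{\epsilon}(t)-\bar{S}(t)^{\vee})(\psi e_{j})\right|_{H_{\gamma}}^{2}\leq 2\eta.
\]
Letting $\eta\downarrow0$ yields \eqref{cX2}. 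The delicate point worth emphasizing is that the tail estimate must be genuinely uniform over the whole interval $[\tau,T]$; this is where the factor $t^{-(p-1)/p}$ in Lemmas \ref{lem: Semigroup property 1} and \ref{lem: limit semigroup property 1} is essential, since on $[\tau,T]$ it is bounded by $\tau^{-(p-1)/p}$, giving a constant independent of $t$ and $\epsilon$.
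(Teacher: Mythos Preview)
Your head/tail strategy and the appeal to the termwise convergence \eqref{eq: semigroups weak convergence weighted space} are both correct, but the crucial uniform tail estimate is not justified, and in fact cannot be obtained by ``retracing the proof of Lemma \ref{lem: Semigroup property 1}'' as you suggest. In that proof the sum over $j$ is collapsed in one stroke via Parseval: one passes from $\sum_j\bigl|\langle G_\epsilon(t,x,\cdot)\psi,\,\widehat{v_j m^{1/2}}\rangle\bigr|^2$ to $\int m(y)\,|\widehat{G_\epsilon(t,x,\cdot)\psi}(y)|^2\,dy$, and from that point on there is no $j$-by-$j$ structure left. What you would need is an $\epsilon$-independent majorant $a_j$ with $\sum_j a_j<\infty$ and $|S_\epsilon(t)(\psi e_j)|_{H_\gamma}^2\le a_j$ for all $t\in[\tau,T]$. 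The natural candidate coming from the uniform bound \eqref{eq: bounded semigroups} is $a_j=e^{CT}|\psi e_j|_{H_\gamma}^2$, but $\sum_j|\psi e_j|_{H_\gamma}^2=\|m\|_{L^1}|\psi|_{H_\gamma}^2$, which is infinite when $m\in L^p\setminus L^1$. So the tail is not controlled uniformly in $\epsilon$ by this route, and your argument has a genuine gap here.

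The paper closes exactly this gap by decomposing the spectral density rather than the index set: write $m=m_1+m_2$ with $m_1=m\mathbf{1}_{\{m<\eta\}}$ and $m_2=m\mathbf{1}_{\{m\ge\eta\}}$. The $m_1$ piece is bounded by $\eta$, and a direct kernel estimate (using $G_\epsilon,\bar G\le C/t$ together with \eqref{eq: kernels applied to the weight}, \eqref{eq: limit kernel applied to the weight}) gives $\sum_j|(S_\epsilon(t)-\bar S(t)^\vee)(\psi\,\widehat{v_j m_1^{1/2}})|_{H_\gamma}^2\le C\eta\,t^{-1}|\psi|_{H_\gamma}^2$, small uniformly in $\epsilon$ and $t\in[\tau,T]$ once $\eta$ is chosen. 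The $m_2$ piece has $\|m_2\|_{L^1}\le\eta^{-(p-1)}\|m\|_{L^p}<\infty$, so now $\sum_j|\psi\,\widehat{v_j m_2^{1/2}}|_{H_\gamma}^2<\infty$; \emph{this} series is $\epsilon$-free, and the uniform semigroup bound \eqref{eq: bounded semigroups} transports its tail smallness to the tail of $\sum_{j>N}|(S_\epsilon(t)-\bar S(t)^\vee)(\psi\,\widehat{v_j m_2^{1/2}})|_{H_\gamma}^2$, after which the finite head is handled by \eqref{eq: semigroups weak convergence weighted space} exactly as you propose. In short, your head/tail argument is the right idea, but it only works after first trading a piece of $m$ for an $L^1$ density so that an $\epsilon$-independent summable majorant exists.
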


\begin{proof}
We show that for any given $\delta>0$, there exists $\epsilon_{\delta}>0$
such that for any $0<\epsilon\leq\epsilon_{\delta}$, 
\begin{equation}
\label{cX2equivalent}
\sum_{j=1}^{\infty}\left|(S_{\epsilon}(t)-\bar{S}(t)^{\vee})\left(\psi e_{j}\right)\right|_{H_{\gamma}}^{2}\leq\delta,\ \ \ \ t \in\,[\tau,T].
\end{equation}
The spectral measure $m$ belongs to $L^{p}(\mathbb{R}^2)$, for $p\in[1,\infty)$, which
means that $m^{1/2}\in L^{2p}(\mathbb{R}^2)$. Given any $\eta>0$, we write $m=m_{1}+m_{2}$,
where 
\[m_{1}=m1_{\{m<\eta\}},\ \ \ \ m_{2}=m1_{\{m\geq\eta\}}.\]
Then $m^{1/2}=m_{1}^{1/2}+m_{2}^{1/2}$ and 
\begin{align*}
I & =\sum_{j=1}^{\infty}\left|(S_{\epsilon}(t)-\bar{S}(t)^{\vee})\left(\psi\widehat{v_{j}m^{1/2}}\right)\right|_{H_{\gamma}}^{2}\\
 & =\sum_{j=1}^{\infty}\left|(S_{\epsilon}(t)-\bar{S}(t)^{\vee})\left(\psi\widehat{v_{j}m_{1}^{1/2}}\right)+(S_{\epsilon}(t)-\bar{S}(t)^{\vee})\left(\psi\widehat{v_{j}m_{2}^{1/2}}\right)\right|_{H_{\gamma}}^{2}\\
 & \leq 2\sum_{j=1}^{\infty}\left|(S_{\epsilon}(t)-\bar{S}(t)^{\vee})\left(\psi\widehat{v_{j}m_{1}^{1/2}}\right)\right|_{H_{\gamma}}^{2}+2\sum_{j=1}^{\infty}\left|(S_{\epsilon}(t)-\bar{S}(t)^{\vee})\left(\psi\widehat{v_{j}m_{2}^{1/2}}\right)\right|_{H_{\gamma}}^{2}\\
 & =:I_{1}(\e,t,\eta)+I_{2}(\e,t,\eta).
\end{align*}

For the first term, due to \eqref{cx1}, we have  
\begin{align*}
I_{1}(\e,t,\eta) & \leq\sum_{j=1}^\infty \int_{\rtwo}\left[\int_{\rtwo}(G_{\epsilon}(t,x,y)-\bar{G}(t,x,y))\psi(y)(\widehat{m_{1}^{1/2}}\ast\widehat{v_{j}})(y)dy\right]^{2}\gamma^{\vee}(x)dx\\
 & \leq\int_{\rtwo}\int_{\rtwo}\vert m_{1}^{1/2}(y)\vert^{2}\vert\widehat{((G_{\epsilon}(t,x,\cdot)-\bar{G}(t,x,\cdot))\psi)}(y)\vert^{2}dy\,\gamma^{\vee}(x)dx\\
 & \leq\eta\int_{\mathbb{R}^2}\int_{\rtwo}\vert(G_{\epsilon}(t,x,y)-\bar{G}(t,x,y))\psi(y)\vert^{2}dy\,\gamma^{\vee}(x)dx\\
 & \leq C\eta t^{-1}\int_{\mathbb{R}^2}\int_{\rtwo}G_{\epsilon}(t,x,y)\vert\psi(y)\vert^{2}+\bar{G}(t,x,y)\vert\psi(y)\vert^{2}dy\,\gamma^{\vee}(x)dx\\
 & = C\eta t^{-1}\int_{\mathbb{R}^2}\int_{\rtwo}\left[G_{\epsilon}(t,x,y)\gamma^{\vee}(x)+\bar{G}(t,x,y)\gamma^{\vee}(x)\right]dx\vert\psi(y)\vert^{2}dy.
 \end{align*}
Then, thanks to \eqref{eq: kernels applied to the weight} and \eqref{eq: limit kernel applied to the weight}, we get
\[I_{1}(\e,t,\eta)   \leq C\eta t^{-1}2e^{Ct}\int_{\rtwo}\vert\psi(y)\vert^{2}\gamma^{\vee}(y)dy =C\eta t^{-1}2e^{Ct}\vert\psi\vert_{H_{\gamma}}^{2}.\]
This means that we can fix $\eta_\delta=\eta(\delta,\tau,T,\psi)>0$ such that 
\begin{equation}
\label{cx4}
\sup_{\e>0}\,\sup_{ t \in\,[\tau,T]}\,I_{1}(\e,t,\eta_\delta)\leq \frac{\delta}2.
\end{equation}

Now, concerning the second term $I_2(\e,t,\eta)$, we have
\begin{align*}
\sum_{j=1}^{\infty}\left|\psi\widehat{v_{j}m_{2}^{1/2}}\right|_{H_{\gamma}}^{2} & =\sum_{j=1}^{\infty}\int_{\rtwo}\left|\psi(x)\widehat{v_{j}m_{2}^{1/2}}(x)\right|^2\gamma^{\vee}(x)dx\\
 & =(2\pi)^{-2}\int_{\rtwo}\sum_{j=1}^{\infty}\left|\int_{\mathbb{R}^2}\exp(i\xi\cdot x)\psi(x)v_{j}(\xi)m_{2}^{1/2}(\xi)d\xi\right|^{2}\gamma^{\vee}(x)dx\\
 & \leq (2\pi)^{-2}\int_{\rtwo}\int_{\rtwo}\vert\exp(i\xi\cdot x)\psi(x)\vert^{2}m_{2}(\xi)d\xi\,\gamma^{\vee}(x)dx\\
 & =(2\pi)^{-2}\Vert m_{2}\Vert_{L^{1}}\vert\psi\vert_{H_{\gamma}}^{2}.
\end{align*}
Then, since
$\Vert m_{2}\Vert_{L^{1}}\leq\Vert m\Vert_{L^{p}}/\eta^{p-1}$, if we take $\eta=\eta_\delta$ we get
\[\sum_{j=1}^{\infty}\left|\psi\widehat{v_{j}m_{2}^{1/2}}\right|_{H_{\gamma}}^{2}\leq (2\pi)^{-2} \eta_\delta^{-(p-1)}\Vert m\Vert_{L^p}|\psi|^2_{H_\gamma}.\]
Due to \eqref{eq: bounded semigroups} and \eqref{eq: bounded limit semigroups}, this implies that
 we can  choose $N_\delta$ large enough such that 
 \begin{equation}
 \label{cX2dot1}
 \sup_{\e>0}\,\sup_{t \in\,[\tau,T]}\,\sum_{j=N_\delta+1}^{\infty}\left|(S_{\epsilon}(t)-\bar{S}(t)^{\vee})\left(\psi\widehat{v_{j}m_{2}^{1/2}}\right)\right|_{H_{\gamma}}^{2}\leq \frac \delta 4.
 \end{equation}
Moreover, by (\ref{eq: semigroups weak convergence weighted space}), we
can choose $0<\epsilon_{\delta}$ small enough such that 
\begin{equation}
\label{cX2dot2}
\sup_{t \in\,[\tau,T]}
\sum_{j=1}^{N_\delta}\left|(S_{\epsilon}(t)-\bar{S}(t)^{\vee})\left(\psi\widehat{v_{j}m_{1}^{1/2}}\right)\right|_{H_{\gamma}}^{2}\leq\frac \delta 4
\end{equation}
for any $\e\leq \e_\delta$. These two inequalities \eqref{cX2dot1} and \eqref{cX2dot2}, together with \eqref{cx4}, imply \eqref{cX2equivalent}. 
\end{proof}

\subsection{Existence and uniqueness }\label{ExistenceAndUniqueness}
\label{section3.2}
Here we state the existence and uniqueness of mild solutions to SPDEs \eqref{SPDE} and \eqref{eq: SPDE on graph} using Lemma \ref{lem: Semigroup property 1} and Lemma \ref{lem: limit semigroup property 1}. We state it in the following theorem.

\begin{thm}
\label{thm: ExistenceUniqueness}Suppose the Hamiltonian $H$ satisfies
Assumption \ref{Assumption 1}, coefficients $b$ and $\sigma$ satisfy Assumption \ref{Assumption 4}. We assume that the spectral
measure of the spatially homogeneous Wiener process $\mathcal{W}(t)$
satisfies Assumption \ref{Assumption 3}, i.e. there is a density function $m(x)=d\mu /dx\in L^p(\rtwo)$ for some $p\in(1,\infty)$. Given $q\geq 1$, $\mathscr{H}_{q}:=L^{q}(\Omega;C([0,T];H_{\gamma}))$ and $\bar{\mathscr{H}}_{q}:=L^{q}(\Omega;C([0,T];\bar{H}_{\gamma}))$ are Banach spaces with norms 
\[
\Vert u\Vert_{\mathscr{H}_{q}}=\left(\mathbb{E}\sup_{t\in[0,T]}\vert u(t)\vert_{H_{\gamma}}^{q}\right)^{1/q},\qquad \Vert \bar{u}\Vert_{\bar{\mathscr{H}}_{q}}=\left(\mathbb{E}\sup_{t\in[0,T]}\vert \bar{u}(t)\vert_{\bar{H}_{\gamma}}^{q}\right)^{1/q},
\]
respectively. Then for any $\e>0$ and $q>2p$, there is a unique mild solution $u_\e$ to \eqref{SPDE} satisfying that
\begin{equation}
\label{bound1}
\sup_{\e \in\,(0,1)}\Vert u_\e \Vert_{\mathscr{H}_{q}}^q \leq C_{T, q}\left(1+|\varphi|_{H_\gamma}^q\right).
\end{equation}
Moreover, there is also a unique mild solution $\bar{u}$ to  \eqref{eq: SPDE on graph} satisfying 
\begin{equation}
\label{bound2}
\Vert \bar{u}\Vert_{\bar{\mathscr{H}}_{q}}^q \leq C_T\left(1+|\varphi^\wedge|^q_{\bar{H}_\gamma}\right).
\end{equation}
\end{thm}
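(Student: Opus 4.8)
The plan is to obtain both $u_\e$ and $\bar u$ as the unique fixed points of the integral maps appearing in \eqref{eq: mild solution} and \eqref{eq: mild solution to the limit equation}, via the Banach fixed point theorem in the spaces $\mathscr{H}_q$ and $\bar{\mathscr{H}}_q$. Concentrating on \eqref{SPDE}, I would set
\[
\mathcal{K}_\e(u)(t):=S_\e(t)\varphi+\int_0^t S_\e(t-s)B(u(s))\,ds+\int_0^t S_\e(t-s)\Sigma(u(s))\,d\mathcal{W}(s),
\]
and show that $\mathcal{K}_\e$ maps $\mathscr{H}_q$ into itself and is a contraction after endowing $\mathscr{H}_q$ with an equivalent Bielecki-type norm $\|u\|_\lambda=(\mathbb{E}\sup_{t\in[0,T]}e^{-\lambda q t}|u(t)|_{H_\gamma}^q)^{1/q}$ with $\lambda$ large (equivalently, on short time intervals which are then concatenated). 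Since the Lipschitz constants of $b,\sigma$ and the constants in Proposition \ref{prop: semigroup on the weight} and Lemma \ref{lem: Semigroup property 1} are all independent of $\e$, every bound below is uniform in $\e\in(0,1)$, which is exactly what \eqref{bound1} requires. The first two terms are routine: strong continuity of $S_\e$ on $H_\gamma$ gives $S_\e(\cdot)\varphi\in C([0,T];H_\gamma)$ with norm bounded by $e^{CT/2}|\varphi|_{H_\gamma}$ via \eqref{eq: bounded semigroups}; and the Lipschitz continuity of $b$ together with $\gamma^\vee\in L^1(\rtwo)$ yields $|B(u)|_{H_\gamma}^2\le C(1+|u|_{H_\gamma}^2)$ and $|B(u)-B(v)|_{H_\gamma}\le L_b|u-v|_{H_\gamma}$, so that \eqref{eq: bounded semigroups} and Minkowski's inequality control the deterministic convolution and its increments, with a factor of $T$ in the contraction estimate.

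The main obstacle is the stochastic convolution $\mathcal{Z}_\e(u)(t):=\int_0^t S_\e(t-s)\Sigma(u(s))\,d\mathcal{W}(s)$, since one must place $\sup_t$ inside $L^q(\Omega)$ while the Hilbert--Schmidt bound of Lemma \ref{lem: Semigroup property 1} is singular at $s=t$. I would handle this by the factorization method: for $\alpha\in(0,1)$ one writes $\mathcal{Z}_\e(u)(t)=\tfrac{\sin\pi\alpha}{\pi}\int_0^t(t-r)^{\alpha-1}S_\e(t-r)Y_\alpha(r)\,dr$, where $Y_\alpha(r)=\int_0^r(r-s)^{-\alpha}S_\e(r-s)\Sigma(u(s))\,d\mathcal{W}(s)$. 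The Burkholder--Davis--Gundy inequality in $H_\gamma$, followed by Lemma \ref{lem: Semigroup property 1} with $\Sigma(u(s))=M(\sigma(u(s)))$ and $|\sigma(u(s))|_{H_\gamma}^2\le C(1+|u(s)|_{H_\gamma}^2)$, gives
\[
\mathbb{E}\,|Y_\alpha(r)|_{H_\gamma}^q\le C_q\,\mathbb{E}\Big(\int_0^r(r-s)^{-2\alpha-(p-1)/p}|\sigma(u(s))|_{H_\gamma}^2\,ds\Big)^{q/2},
\]
which is finite and bounded by $C(1+\|u\|_{\mathscr{H}_q}^q)$ provided the exponent is integrable, i.e. $2\alpha+(p-1)/p<1$, that is $\alpha<1/(2p)$. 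On the other hand, the operator $f\mapsto\int_0^\cdot(\cdot-r)^{\alpha-1}S_\e(\cdot-r)f(r)\,dr$ maps $L^q(0,T;H_\gamma)$ into $C([0,T];H_\gamma)$, by H\"older's inequality, precisely when $\alpha>1/q$. These two requirements are compatible exactly because $q>2p$, which permits the choice $\alpha\in(1/q,1/(2p))$; this is where the hypothesis $q>2p$ enters in an essential way (and it also forces $q>2$, as needed for BDG). The very same chain of estimates, with $\sigma(u)-\sigma(v)$ in place of $\sigma(u)$ and the Lipschitz bound $|\sigma(u(s))-\sigma(v(s))|_{H_\gamma}\le L_\sigma|u(s)-v(s)|_{H_\gamma}$, controls the increment $\|\mathcal{Z}_\e(u)-\mathcal{Z}_\e(v)\|_{\mathscr{H}_q}$.

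Combining the three contributions yields $\mathcal{K}_\e:\mathscr{H}_q\to\mathscr{H}_q$ together with $\|\mathcal{K}_\e(u)-\mathcal{K}_\e(v)\|_{\mathscr{H}_q}\le C(T)\|u-v\|_{\mathscr{H}_q}$, where $C(T)\to 0$ as $T\downarrow 0$ because each term carries a strictly positive power of the time horizon; hence $\mathcal{K}_\e$ is a contraction on a short interval and the unique mild solution $u_\e$ follows by concatenation. Applying the identical estimates to the fixed point itself and closing a Gronwall-type inequality produces the a priori bound \eqref{bound1}, with all constants $\e$-independent. Finally, since Lemma \ref{lem: limit semigroup property 1} furnishes for $\bar{S}(t)$ on $\bar{H}_\gamma$ exactly the same Hilbert--Schmidt bound that Lemma \ref{lem: Semigroup property 1} furnishes for $S_\e(t)$ on $H_\gamma$, and \eqref{eq: bounded limit semigroups} replaces \eqref{eq: bounded semigroups}, the very same argument applied to the map $\mathcal{K}$ on $\bar{\mathscr{H}}_q$ gives existence, uniqueness and the bound \eqref{bound2} for $\bar u$.
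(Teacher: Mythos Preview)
Your proposal is correct and follows exactly the approach the paper indicates: the paper omits the detailed proof, stating only that it follows the standard contraction-mapping argument of Peszat--Zabczyk in the spaces $\mathscr{H}_q$ and $\bar{\mathscr{H}}_q$, with the condition $q>2p$ arising precisely from the construction of the contraction. Your use of the factorization method with $\alpha\in(1/q,1/(2p))$ to control the stochastic convolution is consistent with how the paper itself handles analogous terms later (e.g.\ in the proof of Lemma~\ref{lem: SPDE convergence third term}), and all constants being $\e$-independent follows, as you note, from Proposition~\ref{prop: semigroup on the weight} and Lemma~\ref{lem: Semigroup property 1}.
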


\begin{rem*}
As discussed in \cite{CerraiFreidlin2019}, the existence and uniqueness of the mild solutions stated in Theorem \ref{ExistenceAndUniqueness} is also true if the spectral measure $\mu$ is finite. Together, Theorem \ref{ExistenceAndUniqueness} is actually true when the spectral measure can be written as $\mu=\mu_1+\mu_2$, where $\mu_1$ is a finite measure and $\mu_2$ has density function $m\in L^p(\rtwo)$ for some $p\in(1,\infty)$. 
\end{rem*}
The proof of Theorem \ref{ExistenceAndUniqueness} follows the arguments in \cite{PeszatZabczyk1997}, which is essentially to show that all terms in the definition of the mild solutions \eqref{eq: mild solution} and \eqref{eq: mild solution to the limit equation} are contraction mappings on Banach spaces $\mathscr{H}_{q}$ and $\bar{\mathscr{H}}_{q}$, respectively. The condition that $q>2p$ is required for the construction of contraction mappings. By H\"older's inequality, actually the mild solutions are in $\mathscr{H}_{q}$ and $\bar{\mathscr{H}}_{q}$ for any $q\geq1$. Here we omit the detailed proof of Theorem \ref{thm: ExistenceUniqueness}, since it is standard. 

\subsection{Convergence of the mild solutions}
\label{section3.3}
In this section, we study the convergence of $u_{\epsilon}$ to $\bar{u}$.
The main result of this section is stated in the following theorem.
\begin{thm}
\label{thm: Convergence of SPDE}Suppose the Hamiltonian $H$ satisfies
Assumption \ref{Assumption 1} and \ref{Assumption 2}, coefficients $b$ and $\sigma$ satisfy Assumption \ref{Assumption 4} and the spectral
measure of the spatially homogeneous Wiener process $\mathcal{W}(t)$
satisfies Assumption \ref{Assumption 3}. Let $u_{\epsilon}$
be the unique mild solution to (\ref{SPDE}) and $\bar{u}$
be the unique mild solution to (\ref{eq: SPDE on graph}), with the
same initial conditions $\varphi$ and $\varphi^\wedge$, respectively. Then, for any fixed $q \geq 1$ and $0<\tau<T$, we have that
\begin{equation}\label{ConvergSPDE}
\lim_{\epsilon\rightarrow0}\mathbb{E}\sup_{t\in[\tau,T]}\vert u_{\epsilon}(t)-\bar{u}(t)^{\vee}\vert_{H_{\gamma}}^{q}=\lim_{\epsilon\rightarrow0}\mathbb{E}\sup_{t\in[\tau,T]}\vert u_{\epsilon}(t)^{\wedge}-\bar{u}(t)\vert_{\bar{H}_{\gamma}}^{q}=0.
\end{equation}
\end{thm}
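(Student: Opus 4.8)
The plan is to work directly with the mild formulations \eqref{eq: mild solution} and \eqref{eq: mild solution to the limit equation} and to estimate the difference $w_\epsilon(t):=u_\epsilon(t)-\bar u(t)^\vee$ in $H_\gamma$. Applying the operation $f\mapsto f^\vee$ to \eqref{eq: mild solution to the limit equation} and using Proposition \ref{prop:weightedSpaces}, together with the elementary identity $(fg)^\vee=f^\vee g^\vee$ and the definition $\bar S(r)^\vee g=(\bar S(r)g^\wedge)^\vee$, one rewrites $\bar u(t)^\vee$ as the sum of $\bar S(t)^\vee\varphi$, the drift $\int_0^t\bar S(t-s)^\vee B(\bar u(s)^\vee)\,ds$, and a stochastic convolution driven by the \emph{same} Brownian motions $\{\beta_j\}$ as $\mathcal W$. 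Subtracting from \eqref{eq: mild solution}, I split $w_\epsilon$ into five pieces: the initial term $A_\epsilon(t)=S_\epsilon(t)\varphi-\bar S(t)^\vee\varphi$; the drift feedback $\int_0^tS_\epsilon(t-s)[B(u_\epsilon(s))-B(\bar u(s)^\vee)]\,ds$; the drift error $B_\epsilon(t)=\int_0^t[S_\epsilon(t-s)-\bar S(t-s)^\vee]B(\bar u(s)^\vee)\,ds$; the diffusion feedback $\int_0^tS_\epsilon(t-s)[\Sigma(u_\epsilon(s))-\Sigma(\bar u(s)^\vee)]\,d\mathcal W(s)$; and the diffusion error $C_\epsilon(t)$, the difference of the two stochastic convolutions built on $\sigma(\bar u(s)^\vee)$.

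For the error terms I would proceed as follows. The term $A_\epsilon$ is handled directly by \eqref{eq: semigroups weak convergence weighted space}, giving $\sup_{t\in[\tau,T]}|A_\epsilon(t)|_{H_\gamma}\to0$. For $B_\epsilon$ and, above all, for the stochastic error $C_\epsilon$, I would expand the stochastic integrals along the orthonormal system $e_j=\widehat{v_jm^{1/2}}$. A convenient observation is that the contribution of the mismatch between the reproducing kernels of $\mathcal W$ and $\bar{\mathcal W}$, namely the term carrying $e_j-(e_j^\wedge)^\vee$, vanishes identically: since $\bar S(r)^\vee$ sees only the $\,{}^\wedge$-average and, by \eqref{2}, $\bigl(\sigma(\bar u(s)^\vee)(e_j-(e_j^\wedge)^\vee)\bigr)^\wedge=\sigma(\bar u(s))\bigl(e_j^\wedge-e_j^\wedge\bigr)=0$, one is left with $C_\epsilon(t)=\sum_j\int_0^t(S_\epsilon(t-s)-\bar S(t-s)^\vee)(\sigma(\bar u(s)^\vee)e_j)\,d\beta_j(s)$. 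By the It\^o isometry its second moment is governed by $\sum_j|(S_\epsilon(r)-\bar S(r)^\vee)(\sigma(\bar u(s)^\vee)e_j)|^2_{H_\gamma}$, precisely the object controlled by Lemma \ref{lem: convergenc of the infinite sum}. Crucially the multiplier $\sigma(\bar u(s)^\vee)$ is $\epsilon$-independent, so for each fixed $(s,\omega)$ that lemma applies, while Lemmas \ref{lem: Semigroup property 1}--\ref{lem: limit semigroup property 1} furnish the integrable domination $C(t-s)^{-(p-1)/p}|\sigma(\bar u(s)^\vee)|^2_{H_\gamma}$; dominated convergence then gives $\mathbb E\sup_{t\in[\tau,T]}|C_\epsilon(t)|^q\to0$, and $B_\epsilon\to0$ analogously with \eqref{eq: semigroups weak convergence weighted space} replacing the lemma. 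In each error term the region where $t-s$ is small is isolated by the splitting $\int_0^t=\int_0^{t-\delta}+\int_{t-\delta}^t$, the short interval being bounded uniformly in $\epsilon$ by the Hilbert--Schmidt estimates; to pass from fixed-time bounds to $\mathbb E\sup_t(\cdot)^q$ I would use the factorization method (valid for $q>2p$ as in Theorem \ref{thm: ExistenceUniqueness}, the case $q\ge1$ following by H\"older).

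The two feedback terms are controlled by the Lipschitz continuity of $b,\sigma$: via \eqref{eq: bounded semigroups} for the drift and the It\^o isometry with Lemma \ref{lem: Semigroup property 1} for the diffusion, both are dominated by expressions of the form $\int_0^t\kappa(t-s)\,\mathbb E\sup_{[\,\cdot\,,r]}|w_\epsilon|^q_{H_\gamma}\,dr$ with a weakly singular integrable kernel $\kappa(r)\sim r^{-(p-1)/p}$, so a singular Gronwall inequality (again through factorization, to keep the supremum in time) closes the estimate. The main obstacle is the initial layer: since $w_\epsilon(0)=\varphi-(\varphi^\wedge)^\vee\neq0$, the difference $w_\epsilon(s)$ is only $O(1)$, not small, for small $s$, and the feedback over $[0,\tau]$ does not vanish as $\epsilon\to0$; this is exactly why \eqref{ConvergSPDE} can only hold on $[\tau,T]$. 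I would resolve this by running the whole comparison on an auxiliary interval $[\tau',T]$ with $0<\tau'<\tau$: the genuine error terms still tend to $0$ on $[\tau',T]$ by \eqref{eq: semigroups weak convergence weighted space}, whereas the contribution of the feedback coming from $[0,\tau']$ is bounded, uniformly in $\epsilon$, by $C(\tau')^{\theta}\sup_\epsilon\|u_\epsilon\|^q_{\mathscr H_q}$ for some $\theta>0$ — using the a priori bounds \eqref{bound1}--\eqref{bound2} and the fact that $t-s\ge\tau-\tau'>0$ there. Hence $\limsup_{\epsilon\to0}\mathbb E\sup_{t\in[\tau,T]}|w_\epsilon(t)|^q_{H_\gamma}\le C(\tau')^{\theta}$, and letting $\tau'\downarrow0$ gives the first limit in \eqref{ConvergSPDE}; the second equality then follows at once from $u_\epsilon^\wedge-\bar u=(u_\epsilon-\bar u^\vee)^\wedge$ and the contraction $|\cdot^\wedge|_{\bar H_\gamma}\le|\cdot|_{H_\gamma}$ of \eqref{1}.
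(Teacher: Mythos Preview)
Your proposal is correct and follows essentially the same route as the paper: the same five-piece decomposition of $u_\epsilon-\bar u^\vee$ (initial term, drift feedback/error, diffusion feedback/error), Lemma~\ref{lem: convergenc of the infinite sum} for the Hilbert--Schmidt convergence in the stochastic error, the factorization method to pass to $\sup_t$ for $q>2p$, and a small-time cutoff to absorb the initial layer before closing with Gronwall. The paper organizes things slightly differently---grouping into three terms $I_{\epsilon,1},I_{\epsilon,2},I_{\epsilon,3}$ and then splitting each in separate lemmas, and reducing to small $\tau$ rather than introducing an auxiliary $\tau'<\tau$---but the ideas and the key estimates are the same; your explicit verification that the $e_j-(e_j^\wedge)^\vee$ contribution vanishes is precisely the step the paper uses implicitly when rewriting $\bigl(\int_0^s\bar S(s-r)\Sigma(\bar u(r))\,d\bar{\mathcal W}(r)\bigr)^\vee$ as a stochastic integral against $\mathcal W$.
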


\begin{proof}
Without loss of generality, it is enough to prove \eqref{ConvergSPDE} for large enough $q>2p$.  For any fixed $0<\tau< T$ and $q>2p$, we denote by
\[
\Delta_{\epsilon,q}(\tau,t):=\mathbb{E}\sup_{s\in[\tau,t]}\vert u_{\epsilon}(s)-\bar{u}(s)^{\vee}\vert_{H_{\gamma}}^{q},\ \ \ \ \ t \in\,[\tau,T],\ \ \ \e>0.
\]
Then there is
\begin{align*}
u_{\epsilon}(s)-\bar{u}(s)^{\vee} & =\left[S_{\epsilon}(s)\varphi-\bar{S}(s)^{\vee}\varphi\right]\\
 & \quad+\left[\int_{0}^{s}S_{\epsilon}(s-r)B(u_{\epsilon}(r))dr-\left(\int_{0}^{s}\bar{S}(s-r)B(\bar{u}(r))dr\right)^\vee\right]\\
 & \quad+\left[\int_{0}^{s}S_{\epsilon}(s-r)\Sigma(u_{\epsilon}(r))d\mathcal{W}(r)-\left(\int_{0}^{s}\bar{S}(s-r)\Sigma(\bar{u}(r))d\bar{\mathcal{W}}(r)\right)^\vee\right]\\
 & =\left[S_{\epsilon}(s)\varphi-\bar{S}(s)^{\vee}\varphi\right]\\
 & \quad+\left[\int_{0}^{s}S_{\epsilon}(s-r)B(u_{\epsilon}(r))dr-\int_{0}^{s}\bar{S}(s-r)^{\vee}B(\bar{u}(r)^{\vee})dr\right]\\
 & \quad+\left[\int_{0}^{s}S_{\epsilon}(s-r)\Sigma(u_{\epsilon}(r))d\mathcal{W}(r)-\int_{0}^{s}\bar{S}(s-r)^{\vee}\Sigma(\bar{u}(r)^{\vee})d\mathcal{W}(r)\right]\\
 & =:I_{\epsilon,1}(s)+I_{\epsilon,2}(s)+I_{\epsilon,3}(s).
\end{align*}
Therefore, due to  Lemma \ref{lem: SPDE convergence second term}
and Lemma \ref{lem: SPDE convergence third term} below, 
we have
\begin{align*}
\Delta_{\epsilon,q}(\tau,t) & \leq\sum_{i=1}^{3}\mathbb{E}\sup_{s\in[\tau,t]}\left|I_{\epsilon,i}(s)\right|_{H_{\gamma}}^{q}\\
 & \leq C_{q,T}\int_{\tau}^{t}\Delta_{\epsilon,q}(\tau,s)ds+C_{q, T}\tau+\mathbb{E}\sup_{s\in[\tau,T]}\left|I_{\epsilon,1}(s)\right|_{H_{\gamma}}^{q}+H_{\epsilon,1}(\tau, T)+H_{\epsilon,2}(T),
\end{align*}
and, thanks to the Gr\"onwall lemma, this implies
\[\Delta_{\epsilon,q}(\tau,t)\leq C_{q, T}\left[\tau+\mathbb{E}\sup_{s\in[\tau,T]}\left|I_{\epsilon,1}(s)\right|_{H_{\gamma}}^{q}+H_{\epsilon,1}(\tau, T)+H_{\epsilon,2}(T)\right].\]
Firstly, it is enough to prove \eqref{ConvergSPDE} for small enough $\tau$. Hence, for any $\delta>0$ fixed, we can choose $\tau_\delta$ small enough so that $C_{q, T}\tau<\delta/2$ for every $\tau\leq \tau_\delta$.
Next, we notice that
by (\ref{eq: semigroups weak convergence weighted space}),
we have 
\[
\lim_{\epsilon\rightarrow0}\mathbb{E}\sup_{s\in[\tau,t]}\left|I_{\epsilon,1}(s)\right|_{H_{\gamma}}^{q}=\lim_{\epsilon\rightarrow0}\mathbb{E}\sup_{s\in[\tau,T]}\left|I_{\epsilon,1}(s)^\wedge\right|_{\bar{H}_{\gamma}}^{q}=0.
\]
Thus, thanks to \eqref{cx12} and \eqref{cx13}, we can find $\e_\delta>0$ such that 
\[C_{T, q}\left[\mathbb{E}\sup_{s\in[\tau,T]}\left|I_{\epsilon,1}(s)\right|_{H_{\gamma}}^{q}+H_{\epsilon,1}(\tau, T)+H_{\epsilon,2}(T)\right]<\delta/2,\]
for every $\e\leq \e_\delta$ and $0\leq \tau<T$. This clearly implies our theorem.
\end{proof}

\begin{lem}
\label{lem: SPDE convergence second term}For every $q\geq 1$ and for every $0<\tau<T$ there exists $C_{q, T}>0$ such that for every $0<\tau\leq t \leq T$
\begin{equation}
\mathbb{E}\sup_{s\in[0,t]}\vert I_{\epsilon,2}(s)\vert_{H_{\gamma}}^{q}\leq C_{q,T}\left(\int_{\tau}^{t}\mathbb{E}\sup_{r\in[\tau,s]}\vert u_{\epsilon}(r)-\bar{u}(r)^{\vee}\vert_{H_{\gamma}}^{q}ds+\tau\right)+H_{\e, 1}(\tau,T),\label{eq: convergence theorme second term}
\end{equation}
where $H_{\e, 1}(\tau,T)$ satisfies that
\begin{equation}
\label{cx12}
\lim_{\e\to 0} H_{\e, 1}(\tau,T)=0.
\end{equation}
\end{lem}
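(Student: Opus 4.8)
The plan is to split $I_{\epsilon,2}$ into a term that keeps the same semigroup $S_\epsilon$ but compares the nonlinearities, and a term that keeps the same argument $B(\bar u(r)^\vee)$ but compares the two semigroups. Precisely, I would write
\[
I_{\epsilon,2}(s)=\underbrace{\int_0^s S_\epsilon(s-r)\bigl[B(u_\epsilon(r))-B(\bar u(r)^\vee)\bigr]\,dr}_{=:\,J_{\epsilon,1}(s)}+\underbrace{\int_0^s\bigl[S_\epsilon(s-r)-\bar S(s-r)^\vee\bigr]B(\bar u(r)^\vee)\,dr}_{=:\,J_{\epsilon,2}(s)}.
\]
The first term will produce the Gr\"onwall-ready integral together with the $\tau$ term, while the second will be absorbed into $H_{\epsilon,1}(\tau,T)$.

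For $J_{\epsilon,1}$, since $b$ is Lipschitz the operator $B$ is Lipschitz on $H_\gamma$, and by \eqref{eq: bounded semigroups} each $S_\epsilon(\sigma)$ has norm at most $e^{C\sigma/2}\le e^{CT/2}$ on $H_\gamma$; hence $\vert J_{\epsilon,1}(s)\vert_{H_\gamma}\le L\,e^{CT/2}\int_0^s\vert u_\epsilon(r)-\bar u(r)^\vee\vert_{H_\gamma}\,dr$. Splitting $\int_0^s=\int_0^\tau+\int_\tau^s$ for $s\ge\tau$, I would bound the first piece by the a priori estimates \eqref{bound1}--\eqref{bound2}, which after taking the $q$-th moment give a contribution of order $\tau^q\le T^{q-1}\tau$, and apply Jensen in time together with Fubini to the second piece to obtain exactly $C_{q,T}\bigl(\int_\tau^t\mathbb{E}\sup_{r\in[\tau,s]}\vert u_\epsilon(r)-\bar u(r)^\vee\vert_{H_\gamma}^q\,ds+\tau\bigr)$. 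The range $s\in[0,\tau]$ of the outer supremum in $\mathbb{E}\sup_{s\in[0,t]}$ is handled directly by the a priori bounds on $u_\epsilon$ and $\bar u$, contributing another $C_{q,T}\tau$.

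I would then define $H_{\epsilon,1}(\tau,T):=C_q\,\mathbb{E}\sup_{s\in[0,T]}\vert J_{\epsilon,2}(s)\vert_{H_\gamma}^q$ (so that it bounds the $J_{\epsilon,2}$ contribution for every $t\le T$) and prove \eqref{cx12}. Fix a small $\rho>0$ and split the inner integral at $s-\rho$. On the near-diagonal part $s-\rho\le r\le s$ I use only the uniform bounds \eqref{eq: bounded semigroups} and \eqref{eq: bounded limit semigroups} together with the linear growth estimate $\vert B(\bar u(r)^\vee)\vert_{H_\gamma}\le C(1+\vert\bar u(r)\vert_{\bar H_\gamma})$; this yields a contribution bounded by $C_{q,T}\rho^q$ uniformly in $\epsilon$. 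The far part $0\le r\le s-\rho$ is the crux: there $s-r\ge\rho$, so \eqref{eq: semigroups weak convergence weighted space} gives $\sup_{\sigma\in[\rho,T]}\vert(S_\epsilon(\sigma)-\bar S(\sigma)^\vee)w\vert_{H_\gamma}\to0$ for each fixed $w$, but this is only strong, not operator-norm, convergence and so cannot be applied directly to the $r$- and $\omega$-dependent integrand $B(\bar u(r)^\vee)$.

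The main obstacle will therefore be upgrading this strong convergence to convergence uniform over the integrand. I would resolve it by observing that, almost surely, the map $r\mapsto B(\bar u(r)^\vee)$ is continuous from $[0,T]$ into $H_\gamma$ — since $\bar u\in C([0,T];\bar H_\gamma)$, $f\mapsto f^\vee$ is an isometry into $H_\gamma$ by Proposition \ref{prop:weightedSpaces}, and $B$ is Lipschitz — so its range $\mathcal K(\omega)$ is a compact subset of $H_\gamma$. Because the operators $S_\epsilon(\sigma)-\bar S(\sigma)^\vee$ are uniformly bounded in $\mathcal L(H_\gamma)$ (by $2e^{CT/2}$) and converge strongly to $0$ uniformly in $\sigma\in[\rho,T]$, a standard finite-net argument upgrades this to $\sup_{\sigma\in[\rho,T]}\sup_{w\in\mathcal K(\omega)}\vert(S_\epsilon(\sigma)-\bar S(\sigma)^\vee)w\vert_{H_\gamma}\to0$ almost surely. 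The far part of $\vert J_{\epsilon,2}(s)\vert_{H_\gamma}$ is then at most $T$ times this quantity, hence tends to $0$ a.s., and it is dominated by an integrable random variable built from \eqref{bound2}, so dominated convergence gives that its $q$-th moment vanishes as $\epsilon\to0$ for each fixed $\rho$. Letting first $\epsilon\to0$ and then $\rho\to0$ shows $\limsup_{\epsilon\to0}H_{\epsilon,1}(\tau,T)\le C_{q,T}\rho^q$ for every $\rho>0$, which is \eqref{cx12}.
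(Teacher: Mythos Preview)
Your proposal is correct and uses the same decomposition $I_{\epsilon,2}=J_{\epsilon,1}+J_{\epsilon,2}$ and the same treatment of $J_{\epsilon,1}$ as the paper. The only difference is in the far-diagonal part of $J_{\epsilon,2}$. The paper splits at $t-\tau$ (with the same $\tau$, so the near-diagonal piece is absorbed into the $C_{q,T}\tau$ term of the main estimate rather than into $H_{\epsilon,1}$) and then bounds the far part by
\[
\int_0^{T}\sup_{r\in[\tau,T]}\bigl|[S_\epsilon(r)-\bar S(r)^\vee]B(\bar u(s)^\vee)\bigr|_{H_\gamma}\,ds,
\]
which is $t$-independent; taking $H_{\epsilon,1}(\tau,T):=C_{q,T}\int_0^T\mathbb{E}\sup_{r\in[\tau,T]}\bigl|[S_\epsilon(r)-\bar S(r)^\vee]B(\bar u(s)^\vee)\bigr|_{H_\gamma}^q\,ds$, it proves \eqref{cx12} by dominated convergence in $(s,\omega)$, applying \eqref{eq: semigroups weak convergence weighted space} directly to the fixed element $w=B(\bar u(s,\omega)^\vee)\in H_\gamma$. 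Your compactness/finite-net upgrade is valid but unnecessary: the obstacle you flag (strong vs.\ operator-norm convergence) disappears once you note that the dominated convergence theorem only needs convergence for each fixed $(s,\omega)$, which \eqref{eq: semigroups weak convergence weighted space} already provides.
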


\begin{proof}
We have 
\begin{align*}
I_{\epsilon,2}(t) & =\int_{0}^{t}S_{\epsilon}(t-s)\left[B(u_{\epsilon}(s))-B(\bar{u}(s)^{\vee})\right]ds\\
 & \quad+\int_{0}^{t}\left[S_{\epsilon}(t-s)-\bar{S}(t-s)^{\vee}\right]B(\bar{u}(s)^{\vee})ds\\
 & =:J_{\epsilon,1}(t)+J_{\epsilon,2}(t).
\end{align*}
Then, since $|B(u)|_{H_\gamma}\leq c\left(1+|u|_{H_\gamma}\right)$,  for any $t, \tau >0$ we have that 
\begin{align}
\label{cx41}
\vert J_{\epsilon,1}(t)\vert_{H_{\gamma}}^{q} & \leq C_{q}\left|\int_{0}^{\tau}S_{\epsilon}(t-s)\left[B(u_{\epsilon}(s))-B(\bar{u}(s)^{\vee})\right]ds\right|_{H_{\gamma}}^{q}\nonumber\\
 & \quad+C_{q}\left|\int_{\tau}^{t}S_{\epsilon}(t-s)\left[B(u_{\epsilon}(s))-B(\bar{u}(s)^{\vee})\right]ds\right|_{H_{\gamma}}^{q}\nonumber\\
 & \leq C_{q,T}\int_{0}^{\tau}\left(1+\vert u_{\epsilon}(s)\vert_{H_{\gamma}}^{q}+\vert\bar{u}(s)^{\vee}\vert_{H_{\gamma}}^{q}\right)ds\\
 & \quad+C_{q,T}\int_{\tau}^{t}\vert u_{\epsilon}(s)-\bar{u}(s)^{\vee}\vert_{H_{\gamma}}^{q}ds\nonumber\\
 & \leq C_{q,T}\tau\sup_{s\in[0,T]}\left(1+\vert u_{\epsilon}(s)\vert_{H_{\gamma}}^{q}+\vert\bar{u}(s)^{\vee}\vert_{H_{\gamma}}^{q}\right)\nonumber\\
 & \quad+C_{q,T}\int_{\tau}^{t}\sup_{r\in[\tau,s]}\vert u_{\epsilon}(r)-\bar{u}(r)^{\vee}\vert_{H_{\gamma}}^{q}ds.\nonumber
\end{align}
As shown in \eqref{bound1} and \eqref{bound2}, we have 
\[\sup_{\e>0}\,\mathbb{E}\sup_{s\in[0,T]}\left(1+\vert u_{\epsilon}(s)\vert_{H_{\gamma}}^{q}+\vert\bar{u}(s)^{\vee}\vert_{H_{\gamma}}^{q}\right)\leq C\]
Thus, after taking supremum over time
and expectation in \eqref{cx41}, we obtain
\begin{equation}
\label{cx15}
\mathbb{E}\sup_{s\in[0,t]}\vert J_{\epsilon,1}(s)\vert_{H_{\gamma}}^{q}\leq C_{q,T}\tau+C_{q,T}\int_{\tau}^{t}\mathbb{E}\sup_{r\in[\tau,s]}\vert u_{\epsilon}(r)-\bar{u}(r)^{\vee}\vert_{H_{\gamma}}^{q}ds.
\end{equation}
For the second term $J_{\epsilon,2}(t)$, using again the linear growth of $B$ in $H_\gamma$, we have
\begin{align*}
\vert J_{\epsilon,2}(t)\vert_{H_{\gamma}}^{q} & \leq C_{q}\left|\int_{t-\tau}^{t}\left[S_{\epsilon}(t-s)-\bar{S}(t-s)^{\vee}\right]B(\bar{u}(s)^{\vee})ds\right|_{H_{\gamma}}^{q}\\
 & \quad+C_{q}\left|\int_{0}^{t-\tau}\left[S_{\epsilon}(t-s)-\bar{S}(t-s)^{\vee}\right]B(\bar{u}(s)^{\vee})ds\right|_{H_{\gamma}}^{q}\\
 & \leq C_{q,T}\tau\sup_{s\in[0,T]}\left(1+\vert\bar{u}(s)^{\vee}\vert_{H_{\gamma}}^{q}\right)\\
 & \quad+C_{q,T}\int_{0}^{T}\sup_{r\in[\tau,T]}\left|\left[S_{\epsilon}(r)-\bar{S}(r)^{\vee}\right]B(\bar{u}(s)^{\vee})\right|_{H_{\gamma}}^{q}ds.
\end{align*}
This implies
\[
\mathbb{E}\sup_{s\in[0,t]}\vert J_{\epsilon,2}(s)\vert_{H_{\gamma}}^{q}\leq C_{q,T}\tau+C_{q,T}\int_{0}^{T}\mathbb{E}\sup_{r\in[\tau,T]}\left|\left[S_{\epsilon}(r)-\bar{S}(r)^{\vee}\right]B(\bar{u}(s)^{\vee})\right|_{H_{\gamma}}^{q}ds.
\]
Together with \eqref{cx15}, we proved (\ref{eq: convergence theorme second term}) with
\[
H_{\e, 1}(\tau,T):=C_{q,T}\int_{0}^{T}\mathbb{E}\sup_{r\in[\tau,T]}\left|\left[S_{\epsilon}(r)-\bar{S}(r)^{\vee}\right]B(\bar{u}(s)^{\vee})\right|_{H_{\gamma}}^{q}ds.
\]
By (\ref{eq: semigroups weak convergence weighted space}) and (\ref{eq: bounded semigroups}),
using the dominated convergence theorem we have that 
\[
\lim_{\epsilon\rightarrow0}\int_{0}^{T}\mathbb{E}\sup_{r\in[\tau,T]}\left|\left[S_{\epsilon}(r)-\bar{S}(r)^{\vee}\right]B(\bar{u}(s)^{\vee})\right|_{H_{\gamma}}^{q}ds=0
\]
for any $0<\tau<T$ and this implies \eqref{cx12}.
\end{proof}
\begin{lem}
\label{lem: SPDE convergence third term} For every $q>2p$ and for every $0<\tau<T$, we have that
\begin{equation}
\mathbb{E}\sup_{s\in[0,t]}\vert I_{\epsilon,3}(s)\vert_{H_{\gamma}}^{q}\leq C_{q,T}\int_{\tau}^{t}\mathbb{E}\sup_{r\in[\tau,s]}\vert u_{\epsilon}(r)-\bar{u}(r)^{\vee}\vert_{H_{\gamma}}^{q}ds+H_{\e, 2}(T),\label{eq: convergence theorme third term}
\end{equation}
where $H_{\e, 2}(T)$ satisfies that 
\begin{equation}
\label{cx13}
\lim_{\e\to 0} H_{\e, 2}(T)=0.
\end{equation}
\end{lem}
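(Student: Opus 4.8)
\emph{Proof strategy.} The plan is to split $I_{\epsilon,3}$ exactly as $I_{\epsilon,2}$ was split in Lemma \ref{lem: SPDE convergence second term}, separating the part governed by the Lipschitz continuity of $\sigma$ from the part that vanishes because the semigroups converge. Adding and subtracting $\int_{0}^{s}S_{\epsilon}(s-r)\Sigma(\bar u(r)^{\vee})d\mathcal{W}(r)$, I would write $I_{\epsilon,3}(s)=J_{\epsilon,3}(s)+J_{\epsilon,4}(s)$, with
\[
J_{\epsilon,3}(s)=\int_{0}^{s}S_{\epsilon}(s-r)\left[\Sigma(u_{\epsilon}(r))-\Sigma(\bar u(r)^{\vee})\right]d\mathcal{W}(r),
\]
\[
J_{\epsilon,4}(s)=\int_{0}^{s}\left[S_{\epsilon}(s-r)-\bar S(s-r)^{\vee}\right]\Sigma(\bar u(r)^{\vee})d\mathcal{W}(r).
\]
Since $\Sigma(v)=M(\sigma(v))$ and $\sigma$ is Lipschitz (Assumption \ref{Assumption 4}), Lemma \ref{lem: Semigroup property 1} controls the Hilbert--Schmidt norm of the $J_{\epsilon,3}$-integrand by
\[
\left\|S_{\epsilon}(s-r)M\!\left(\sigma(u_{\epsilon}(r))-\sigma(\bar u(r)^{\vee})\right)\right\|_{L_{(HS)}(\mathscr{S}_{q}',H_{\gamma})}^{2}\leq C_{T}(s-r)^{-(p-1)/p}\,\vert u_{\epsilon}(r)-\bar u(r)^{\vee}\vert_{H_{\gamma}}^{2}.
\]

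For $J_{\epsilon,3}$ the main device is the factorization method: for a constant $c_{\alpha}>0$ and a suitable exponent $\alpha$,
\[
J_{\epsilon,3}(s)=c_{\alpha}\int_{0}^{s}(s-r)^{\alpha-1}S_{\epsilon}(s-r)Y_{\epsilon}(r)\,dr,
\]
\[
Y_{\epsilon}(r)=\int_{0}^{r}(r-\rho)^{-\alpha}S_{\epsilon}(r-\rho)M\!\left(\sigma(u_{\epsilon}(\rho))-\sigma(\bar u(\rho)^{\vee})\right)d\mathcal{W}(\rho).
\]
I would bound $\mathbb{E}\sup_{s}\vert J_{\epsilon,3}(s)\vert_{H_{\gamma}}^{q}$ by H\"older's inequality in the outer integral, using the uniform bound \eqref{eq: bounded semigroups}, and estimate $\mathbb{E}\vert Y_{\epsilon}(r)\vert_{H_{\gamma}}^{q}$ by the Burkholder--Davis--Gundy inequality together with the Hilbert--Schmidt estimate above. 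Choosing $\alpha\in(1/q,1/(2p))$, which is possible precisely because $q>2p$, renders both the outer kernel $(s-r)^{(\alpha-1)q/(q-1)}$ and the inner kernel $(r-\rho)^{-2\alpha-(p-1)/p}$ integrable, and a Fubini/Young rearrangement yields
\[
\mathbb{E}\sup_{s\in[0,t]}\vert J_{\epsilon,3}(s)\vert_{H_{\gamma}}^{q}\leq C_{q,T}\int_{0}^{t}\mathbb{E}\vert u_{\epsilon}(r)-\bar u(r)^{\vee}\vert_{H_{\gamma}}^{q}\,dr.
\]
Splitting $\int_{0}^{t}=\int_{0}^{\tau}+\int_{\tau}^{t}$, the tail gives the desired $C_{q,T}\int_{\tau}^{t}\mathbb{E}\sup_{r\in[\tau,s]}\vert u_{\epsilon}(r)-\bar u(r)^{\vee}\vert_{H_{\gamma}}^{q}\,ds$, while the contribution over $[0,\tau]$ is $O(\tau)$ uniformly in $\epsilon$ by the a priori bounds \eqref{bound1}--\eqref{bound2}, consistent with the $C_{q,T}\tau$ term in the proof of Theorem \ref{thm: Convergence of SPDE}.

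It then remains to set $H_{\e,2}(T):=C_{q,T}\,\mathbb{E}\sup_{s\in[0,T]}\vert J_{\epsilon,4}(s)\vert_{H_{\gamma}}^{q}$ and to prove \eqref{cx13}; this is where I expect the real difficulty. Running the same factorization/BDG scheme on $J_{\epsilon,4}$, the whole matter reduces to the behaviour of
\[
\left\|\left(S_{\epsilon}(s-r)-\bar S(s-r)^{\vee}\right)M(\sigma(\bar u(r)^{\vee}))\right\|_{L_{(HS)}(\mathscr{S}_{q}',H_{\gamma})}^{2}=\sum_{j=1}^{\infty}\left|\left(S_{\epsilon}(s-r)-\bar S(s-r)^{\vee}\right)\!\left(\sigma(\bar u(r)^{\vee})e_{j}\right)\right|_{H_{\gamma}}^{2}.
\]
By Lemma \ref{lem: convergenc of the infinite sum} this vanishes as $\epsilon\to0$, uniformly for $s-r$ in any interval $[\delta,T]$, whereas Lemmas \ref{lem: Semigroup property 1} and \ref{lem: limit semigroup property 1} supply the $\epsilon$-uniform domination $C_{T}(s-r)^{-(p-1)/p}\bigl(1+\vert\bar u(r)^{\vee}\vert_{H_{\gamma}}^{2}\bigr)$, which is integrable against the factorization kernels and has finite expectation by \eqref{bound2}. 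The obstruction is that the convergence of Lemma \ref{lem: convergenc of the infinite sum} (equivalently \eqref{eq: semigroups weak convergence weighted space}) degenerates as $s-r\to0$, precisely where the kernel $(s-r)^{-(p-1)/p}$ is singular, so it cannot be used near the diagonal. The remedy is to cut the time integral at the diagonal: on $\{s-r<\delta\}$ I would invoke the uniform Hilbert--Schmidt domination to bound the contribution by a quantity small with $\delta$, uniformly in $\epsilon$, and on $\{s-r\ge\delta\}$ I would apply Lemma \ref{lem: convergenc of the infinite sum} together with the dominated convergence theorem to send the contribution to $0$ as $\epsilon\to0$. Letting first $\epsilon\to0$ and then $\delta\to0$ yields \eqref{cx13}, the constraint $q>2p$ that makes $\alpha\in(1/q,1/(2p))$ admissible being the quantitative core that reconciles the factorization with the time singularity of the non-smooth noise.
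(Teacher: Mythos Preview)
Your treatment of $J_{\epsilon,3}$ is correct and coincides with the paper's argument for its $J_{\epsilon,1}$: factorization with $\alpha\in(1/q,1/(2p))$, H\"older in the outer integral, Burkholder--Davis--Gundy together with Lemma~\ref{lem: Semigroup property 1} on the inner $Y_\epsilon$, then split $\int_0^t=\int_0^\tau+\int_\tau^t$ and absorb the first piece via \eqref{bound1}--\eqref{bound2}. You also correctly anticipate the residual $C_{q,T}\tau$ contribution.

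The gap is in the $J_{\epsilon,4}$ part. The factorization identity
\[
\int_0^s T(s-r)\Phi(r)\,d\mathcal W(r)=c_\alpha\int_0^s(s-r)^{\alpha-1}T(s-r)\,Y_\alpha(r)\,dr,\qquad Y_\alpha(r)=\int_0^r(r-\rho)^{-\alpha}T(r-\rho)\Phi(\rho)\,d\mathcal W(\rho),
\]
rests on the semigroup relation $T(s-r)=T(s-\rho)T(\rho-r)$. The operator family $T(t)=S_\epsilon(t)-\bar S(t)^\vee$ does \emph{not} satisfy this, so your sentence ``running the same factorization/BDG scheme on $J_{\epsilon,4}$, the whole matter reduces to the behaviour of $\|(S_\epsilon-\bar S^\vee)M(\sigma(\bar u^\vee))\|_{HS}^2$'' is not justified. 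Without a valid factorization you cannot pass from pointwise-in-$s$ estimates to $\mathbb E\sup_{s\in[0,T]}|J_{\epsilon,4}(s)|_{H_\gamma}^q$, and your diagonal-cutting argument, while correct for estimating $\mathbb E|J_{\epsilon,4}(s)|^q$ at a fixed time, does not by itself deliver the supremum.

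The paper resolves this by factorizing each stochastic convolution separately and then \emph{telescoping} the difference of the two factorized representations:
\[
J_{\epsilon,2}(t)=c_\alpha\int_0^t(t-s)^{\alpha-1}S_\epsilon(t-s)\,Y_{\alpha,1}(s)\,ds+c_\alpha\int_0^t(t-s)^{\alpha-1}\bigl[S_\epsilon(t-s)-\bar S(t-s)^\vee\bigr]\,Y_{\alpha,2}(s)\,ds,
\]
where $Y_{\alpha,1}(s)=\int_0^s(s-r)^{-\alpha}\bigl[S_\epsilon(s-r)-\bar S(s-r)^\vee\bigr]\Sigma(\bar u(r)^\vee)\,d\mathcal W(r)$ and $Y_{\alpha,2}(s)=\int_0^s(s-r)^{-\alpha}\bar S(s-r)^\vee\Sigma(\bar u(r)^\vee)\,d\mathcal W(r)$. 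This places the difference $S_\epsilon-\bar S^\vee$ either in the inner stochastic integral (handled by Lemma~\ref{lem: convergenc of the infinite sum} and dominated convergence, exactly along the lines you sketch) or in the outer deterministic integral acting on the $\epsilon$-independent $Y_{\alpha,2}$ (handled by \eqref{eq: semigroups weak convergence weighted space} and dominated convergence). Once you insert this telescoping step, your plan goes through; without it the supremum estimate is missing a key ingredient.
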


\begin{proof}
We have
\begin{align*}
I_{\epsilon,3}(t) & =\int_{0}^{t}S_{\epsilon}(t-s)\left[\Sigma(u_{\epsilon}(s))-\Sigma(\bar{u}(s)^{\vee})\right]d\mathcal{W}(s)\\
 & \quad+\int_{0}^{t}\left[S_{\epsilon}(t-s)-\bar{S}(t-s)^{\vee}\right]\Sigma(\bar{u}(s)^{\vee})d\mathcal{W}(s)\\
 & =:J_{\epsilon,1}(t)+J_{\epsilon,2}(t).
\end{align*}
By the factorization formula, for every $\alpha \in\,(0,1)$ we have  
\begin{align*}
J_{\epsilon,1}(t) & =\frac{\sin\pi\alpha}{\pi}\int_{0}^{t}(t-s)^{\alpha-1}S_{\epsilon}(t-s)Y_{\alpha}(s)ds,
\end{align*}
where 
\begin{align*}
Y_{\alpha}(s) & =\int_{0}^{s}(s-r)^{-\alpha}S_{\epsilon}(s-r)\left[\Sigma(u_{\epsilon}(r))-\Sigma(\bar{u}(r)^{\vee})\right]d\mathcal{W}(r).
\end{align*}
Now, since $m \in\,L^p(\rtwo)$, for some $p \in\,(1,\infty)$, we can find $\alpha \in\,(0,1)$ and $q>1$ such that $2p<1/\alpha<q$. Then, using
Proposition \ref{prop: semigroup on the weight} and H\"older's inequality,
we have
\begin{align*}
\mathbb{E}\sup_{s\in[0,t]}\vert J_{\epsilon,1}(s)\vert_{H_{\gamma}}^{q} & \leq C_{q,T}\int_{0}^{t}\mathbb{E}\vert Y_{\alpha}(s)\vert_{H_{\gamma}}^{q}ds.
\end{align*}
By  Lemma \ref{lem: Semigroup property 1}, due to the Lipschitz continuity of $\sigma$,
we have that
\begin{align*}
\mathbb{E}\vert Y_{\alpha}(s)\vert_{H_{\gamma}}^{q} & \leq C_{q,T,\alpha}\,C^{q}\,\mathbb{E}\left(\int_{0}^{s}(s-r)^{-2\alpha}(s-r)^{-(p-1)/p}\vert u_{\epsilon}(r)-\bar{u}(r)^{\vee}\vert_{H_{\gamma}}^{2}dr\right)^{q/2}.
\end{align*}
Then, from Young's inequality and estimates \eqref{bound1} and \eqref{bound2}, we obtain
\begin{align*}
\int_{0}^{t}\mathbb{E}\vert Y_{\alpha}(s)\vert_{H_{\gamma}}^{q}ds & \leq C_{q,T}\left(\int_{0}^{t}s^{-2\alpha-(p-1)/p}ds\right)^{q/2} \mathbb{E} \int_{0}^{t}\vert u_{\epsilon}(s)-\bar{u}(s)^{\vee}\vert_{H_{\gamma}}^{q}ds\\
 & \leq C_{q,T}\mathbb{E}\int_{0}^{t}\vert u_{\epsilon}(s)-\bar{u}(s)^{\vee}\vert_{H_{\gamma}}^{q}ds\\
 & \leq C_{q,T}\tau\left(\mathbb{E}\sup_{s\in[0,T]}\vert u_{\epsilon}(s)\vert_{H_{\gamma}}^{q}+\mathbb{E}\sup_{s\in[0,T]}\vert\bar{u}(s)^{\vee}\vert_{H_{\gamma}}^{q}\right)\\
 & \quad+C_{q,T}\left(\int_{\tau}^{t}\mathbb{E}\sup_{r\in[\tau,s]}\vert u_{\epsilon}(r)-\bar{u}(r)^{\vee}\vert_{H_{\gamma}}^{q}ds\right)\\
 & \leq C_{q,T}\left(\tau+\int_{\tau}^{t}\mathbb{E}\sup_{r\in[\tau,s]}\vert u_{\epsilon}(r)-\bar{u}(r)^{\vee}\vert_{H_{\gamma}}^{q}ds\right).
\end{align*}
This implies 
\[
\mathbb{E}\sup_{s\in[0,t]}\vert J_{\epsilon,1}(s)\vert_{H_{\gamma}}^{q}\leq C_{q,T}\left(\tau+\int_{\tau}^{t}\mathbb{E}\sup_{r\in[\tau,s]}\vert u_{\epsilon}(r)-\bar{u}(r)^{\vee}\vert_{H_{\gamma}}^{q}ds\right).
\]
Again, using the factorization formula
\begin{align*}
J_{\epsilon,2}(t) & =\frac{\sin\pi\alpha}{\pi}\int_{0}^{t}(t-s)^{\alpha-1}S_{\epsilon}(t-s)Y_{\alpha,1}(s)ds\\
 & \quad+\frac{\sin\pi\alpha}{\pi}\int_{0}^{t}(t-s)^{\alpha-1}\left[S_{\epsilon}(t-s)-\bar{S}(t-s)^{\vee}\right]Y_{\alpha,2}(s)ds,
\end{align*}
where 
\[
Y_{\alpha,1}(s)=\int_{0}^{s}(s-r)^{-\alpha}\left[S_{\epsilon}(s-r)-\bar{S}(s-r)^{\vee}\right]G(\bar{u}(r)^{\vee})d\mathcal{W}(r),
\]
and
\[
Y_{\alpha,2}(s)=\int_{0}^{s}(s-r)^{-\alpha}\bar{S}(s-r)^{\vee}G(\bar{u}(r)^{\vee})d\mathcal{W}(r).
\]
Then 
\begin{align*}
\mathbb{E}\sup_{s\in[0,t]}\vert J_{\epsilon,2}(s)\vert_{H_{\gamma}}^{q} & \leq C_{T}\int_{0}^{t}\mathbb{E}\vert Y_{\alpha,1}(s)\vert_{H_{\gamma}}^{q}ds\\
 & \quad+C_{T}\mathbb{E}\sup_{s\in[0,t]}\int_{0}^{s}\left|\left[S_{\epsilon}(s-r)-\bar{S}(s-r)^{\vee}\right]Y_{\alpha,2}(r)\right|_{H_{\gamma}}^{q}dr\\
 & =:K_{\epsilon,1}(t)+K_{\epsilon,2}(t).
\end{align*}
Here 
\begin{align*}
\mathbb{E}\vert Y_{\alpha,1}(s)\vert_{H_{\gamma}}^{q} & \leq C^{q}\mathbb{E}\left(\int_{0}^{s}(s-r)^{-2\alpha}\sum_{j=1}^{\infty}\left|\left[S_{\epsilon}(s-r)-\bar{S}(s-r)^{\vee}\right]G(\bar{u}(r)^{\vee})e_{j}\right|_{H_{\gamma}}^{2}dr\right)^{q/2}.
\end{align*}
By Lemma \ref{lem: convergenc of the infinite sum} and (\ref{eq: bounded semigroups}),
using the dominated convergence theorem, we have that 
\[\lim_{\epsilon\rightarrow0}\mathbb{E}\vert Y_{\alpha,1}(s)\vert_{H_{\gamma}}^{q}=0.\]
This implies that 
\[
\lim_{\epsilon\rightarrow0}\sup_{t\in[0,T]}K_{\epsilon,1}(t)=0.
\]
For $K_{\epsilon,2}$, by (\ref{eq: bounded semigroups}) and 
the dominated convergence theorem, we again have 
\[
\lim_{\epsilon\rightarrow0}\sup_{t\in[0,T]}K_{\epsilon,2}(t)=0.
\]
Therefore, if we define 
\[H_{\e, 2}(T):=\sup_{t \in\,[0,T]} K_{\e, 1}(t)+\sup_{t \in\,[0,T]} K_{\e, 2}(t),\]
our proof is complete.
\end{proof}

\section{A weaker type convergence if dT/dz=0}
\label{section4}
In \cite{CerraiFreidlin2019}, it has been shown that if  Assumption \ref{Assumption 2} is verified, that is
\[
\frac{dT_{k}(z)}{dz}\neq0,\qquad(z,k)\in\Gamma,
\]
then for any $u \in\,H_\gamma$ and $0<\tau<T$
\begin{equation}
\label{cx20}
\lim_{\epsilon\rightarrow0}\sup_{t\in[\tau,T]}\left|\mathbf{E}_{x}u(X_{\epsilon}(t))-\bar{\mathbf{E}}_{\Pi(x)}u^{\wedge}(\bar{Y}(t))\right|=0.
\end{equation}

In \cite{CerraiFreidlin2019}, Assumption \ref{Assumption 2} is actually used  to say that, as shown in  \cite[Lemma 4.3]{Freidlin2002},  if $\alpha \in\,(4/7,2/3)$ then
for every $u \in\,C^2_b(\rtwo)$ and for every compact set $K \in\,\rtwo$
\begin{equation}
\label{cx21}
\lim_{\e\to 0} \sup_{x \in\,K}\left|\mathbf{E}_x u(X_\e(\e^\alpha))-(u^\wedge)^{\vee}(x)\right|=0.\end{equation}
When Assumption  \ref{Assumption 2} is not satisfied, we don't have a way to prove \eqref{cx21}, which is a key ingredient in the proof of \eqref{cx20}.
In this section,  we will
show that when Assumption \ref{Assumption 2} is not verified and hence we cannot prove  \eqref{cx21}, then limit \eqref{cx20} can be replaced by  the following weaker type of convergence.

\begin{thm}
\label{teo1}
Under Assumptions  \ref{Assumption 1}, \ref{Assumption 3} and \ref{Assumption 4}, for any $0\leq \tau<T$ and any
 compact set $K\subset\rtwo$, we have 
\begin{equation}
\lim_{\epsilon\rightarrow0}\sup_{x\in K}\left|\int_{\tau}^{T}\left[\mathbf{E}_{x}u(X_{\epsilon}(t))-\bar{\mathbf{E}}_{\Pi(x)}u^{\wedge}(\bar{Y}(t))\right]\theta(t)dt\right|=0\label{eq: weak convergence main result}
\end{equation}
for any $u\in C_{b}(\rtwo)$ and $\theta\in C_{b}([\tau,T])$.
\end{thm}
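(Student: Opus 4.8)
The plan is to write the integrand as an \emph{averaging} term plus a \emph{weak\-/convergence} term and treat them separately. Setting $g:=u-(u^{\wedge})^{\vee}$, which by construction satisfies $g^{\wedge}\equiv0$, and using $\mathbf{E}_{x}u^{\wedge}(\Pi(X_{\epsilon}(t)))=S_{\epsilon}(t)(u^{\wedge})^{\vee}(x)$ together with $\bar{\mathbf{E}}_{\Pi(x)}u^{\wedge}(\bar{Y}(t))=\bar{S}(t)^{\vee}u(x)$, I would decompose, for $x\in K$,
\[
\int_{\tau}^{T}\!\left[\mathbf{E}_{x}u(X_{\epsilon}(t))-\bar{\mathbf{E}}_{\Pi(x)}u^{\wedge}(\bar{Y}(t))\right]\theta(t)\,dt=\int_{\tau}^{T}\!\mathbf{E}_{x}g(X_{\epsilon}(t))\theta(t)\,dt+\int_{\tau}^{T}\!\left[\mathbf{E}_{x}u^{\wedge}(\Pi(X_{\epsilon}(t)))-\bar{\mathbf{E}}_{\Pi(x)}u^{\wedge}(\bar{Y}(t))\right]\theta(t)\,dt.
\]
Before this, I would reduce to convenient data. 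The uniform Gaussian tail of Corollary \ref{cor1} (valid for both $G_{\epsilon}$ and $\bar{G}$) shows that only the values of $u$ on a large ball $B_{R}$ matter, up to an error uniform in $\epsilon$ and $x\in K$; since $u$ is uniformly continuous on $B_{R}$ this lets me replace $u\in C_{b}(\rtwo)$ by a function in $C_{c}^{\infty}(\rtwo)$. Bounding the whole integrand by $2\Vert u\Vert_{\infty}\Vert\theta-\theta_{n}\Vert_{\infty}$ likewise reduces $\theta\in C_{b}([\tau,T])$ to $\theta\in C^{1}$. Note that the argument uses only Assumption \ref{Assumption 1} and the estimates of Section \ref{F-WAveragingResult} and Corollary \ref{cor1}; crucially, Assumption \ref{Assumption 2} is never invoked.

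For the weak\-/convergence term I would observe that, for smooth $u$, $u^{\wedge}\in C_{b}(\Gamma)$ (continuity at the vertices being standard), so $\omega\mapsto\int_{\tau}^{T}u^{\wedge}(\omega(t))\theta(t)\,dt$ is a bounded continuous functional on $C([0,T];\Gamma)$. For a fixed starting point this term tends to $0$ by the Freidlin--Wentzell convergence $\Pi(X_{\epsilon})\Rightarrow\bar{Y}$. To upgrade to uniformity over $x\in K$ I would argue by contradiction: if it failed, there would be $\epsilon_{n}\to0$ and $x_{n}\to x_{*}$ in $K$ along which the term stays bounded away from $0$. The laws of $\Pi(X_{\epsilon_{n}}^{x_{n}})$ are tight (uniformly in $x\in K$ and $\epsilon$, by the modulus estimates underlying \cite[Theorem 8.2.2]{FreidlinWentzell1998} and the tails of Corollary \ref{cor1}); every limit point solves the martingale problem for $\bar{L}$ with initial datum $\Pi(x_{*})=\lim\Pi(x_{n})$, and well\-/posedness of that martingale problem identifies it as the law of $\bar{Y}^{\Pi(x_{*})}$. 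Together with the Feller continuity of $\bar{Y}$ in its starting point, this forces convergence to $0$ along the sequence, a contradiction.

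The heart of the matter, and the main obstacle, is the averaging term. Since $g^{\wedge}\equiv0$, on every non\-/critical level set the transport equation $\langle\nabla^{\perp}H,\nabla\chi\rangle=g$ is solvable with $\oint\chi\,d\mu_{z,k}=0$, the solvability condition being exactly $g^{\wedge}=0$; this construction needs only that $T_{k}(z)$ be finite and smooth, \emph{not} that $dT_{k}/dz\neq0$. The obstruction is that $T_{k}(z)\to\infty$ at the saddle levels, so $\chi$ and $\Delta\chi$ blow up near the separatrices. I would therefore fix a smooth cut\-/off $\rho_{\delta}=\zeta_{\delta}(H)$ vanishing in a $\delta$\-/neighbourhood of the critical values and split $g=\rho_{\delta}g+(1-\rho_{\delta})g$. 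As $\rho_{\delta}$ is constant on level sets, $\rho_{\delta}g$ still has zero level\-/set average and $\tilde{\chi}_{\delta}:=\rho_{\delta}\chi\in C_{c}^{2}(\rtwo)$ solves $\langle\nabla^{\perp}H,\nabla\tilde{\chi}_{\delta}\rangle=\rho_{\delta}g$ with $C^{2}$\-/norm bounded by some $C_{\delta}$. Dynkin's formula applied to $\epsilon\tilde{\chi}_{\delta}$ gives
\[
\mathbf{E}_{x}\int_{\tau}^{t}\rho_{\delta}g(X_{\epsilon}(s))\,ds=\epsilon\,\mathbf{E}_{x}\tilde{\chi}_{\delta}(X_{\epsilon}(t))-\epsilon\,\mathbf{E}_{x}\tilde{\chi}_{\delta}(X_{\epsilon}(\tau))-\frac{\epsilon}{2}\mathbf{E}_{x}\int_{\tau}^{t}\Delta\tilde{\chi}_{\delta}(X_{\epsilon}(s))\,ds,
\]
which is $O(\epsilon C_{\delta})$ uniformly in $x$ and $t$; integrating by parts against $\theta\in C^{1}$ makes the $\rho_{\delta}g$ contribution $O(\epsilon C_{\delta})\to0$ for fixed $\delta$. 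For the remainder I would use $\vert(1-\rho_{\delta})g\vert\leq\Vert g\Vert_{\infty}\mathbf{1}_{A_{\delta}}$, with $A_{\delta}$ a neighbourhood of the critical levels shrinking to a null set, and bound the occupation time $\int_{\tau}^{T}\int_{A_{\delta}}G_{\epsilon}(t,x,y)\,dy\,dt$ through Corollary \ref{cor1}: the tail $\vert y\vert>R$ is uniformly small, while on $B_{R}$ one gets $C\,\vert A_{\delta}\cap B_{R}\vert\log(T/\tau)\to0$ as $\delta\to0$, uniformly in $\epsilon$ and $x\in K$ (the case $\tau=0$ handled by discarding a short initial interval $[0,t_{0}]$ on which the integrand is trivially $\leq1$). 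Choosing first $\delta$ small and then letting $\epsilon\to0$ forces the averaging term to $0$ uniformly on $K$; assembling the two terms and undoing the two reductions completes the proof.
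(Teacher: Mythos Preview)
Your proof is correct, and for the averaging term it follows a genuinely different route from the paper. Both arguments split
\[
\mathbf{E}_{x}u(X_{\epsilon}(t))-\bar{\mathbf{E}}_{\Pi(x)}u^{\wedge}(\bar{Y}(t))
=\mathbf{E}_{x}g(X_{\epsilon}(t))+\bigl[\mathbf{E}_{x}(u^{\wedge})^{\vee}(X_{\epsilon}(t))-\bar{\mathbf{E}}_{\Pi(x)}u^{\wedge}(\bar{Y}(t))\bigr],
\]
with $g=u-(u^{\wedge})^{\vee}$, and both handle the second piece via the Freidlin--Wentzell weak convergence (the paper quotes the pointwise-in-$t$ statement \cite[Lemma A.3]{CerraiFreidlin2019}; your compactness/martingale-problem argument for uniformity over $K$ is an equivalent way to package it). The substantive difference is in the treatment of $\int \mathbf{E}_{x}g(X_{\epsilon}(t))\theta(t)\,dt$. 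The paper localizes via the stopping times $\tau_i,\sigma_i$ separating sojourns near and away from the vertices; near the vertices it controls the occupation time by the Freidlin--Wentzell estimate $\sup_{y\in D(\pm\delta/2)}\mathbf{E}_{y}\sigma_0\le C\delta^{2}|\log\delta|$, while away from them (Lemma \ref{lem: estimate on each edge}) it discretizes the time axis into intervals of length $\epsilon T(X_{\epsilon}(s_k))$ and compares $X_{\epsilon}$ to the deterministic orbit $x_{\epsilon}$ over one period, exploiting $\int_{0}^{\epsilon T(x)}[u(x_{\epsilon})-(u^{\wedge})^{\vee}(x_{\epsilon})]=0$; this yields an $O(\sqrt{\epsilon})$ bound per excursion. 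You instead use the corrector method: solve $\langle\nabla^{\perp}H,\nabla\chi\rangle=g$ on the non-critical level sets, cut off by $\rho_{\delta}=\zeta_{\delta}(H)$ near all critical values (so $\tilde{\chi}_{\delta}=\rho_{\delta}\chi\in C_{c}^{2}$ globally), and apply Dynkin's formula to obtain an $O(\epsilon C_{\delta})$ bound directly; the remainder $(1-\rho_{\delta})g$ is then controlled by the occupation time of a shrinking neighbourhood of the critical levels via the kernel bound of Corollary \ref{cor1} rather than by stopping-time estimates. Your argument is shorter and more PDE-flavored, gives a cleaner $O(\epsilon)$ rate on the main term, and replaces the pathwise machinery of \cite[(8.3.14), (8.5.17)]{FreidlinWentzell1998} by the uniform Gaussian estimate already established in Section \ref{section3.1}; the paper's approach, on the other hand, stays entirely within the probabilistic toolkit of \cite{FreidlinWentzell1998} and does not require constructing a global corrector.
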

  
To prove Theorem \ref{teo1}, we need the following notations. For $\Pi(x)=(z,k)$ in the interior of edge $I_{k}$, we set
$T(x)=T_{k}(z)$. Given a compact set $K \subset \rtwo$ and  $\delta>0$,
we denote 
\[
T_{M, \delta}(K):=\sup_{x\in K\backslash G(\pm\delta)}T(x),\qquad T_{m, \delta}(K):=\inf_{x\in K\backslash G(\pm\delta)}T(x).
\]
Here we remove a small neighborhood of all the vertices,  $G(\pm\delta)$, when taking the supremum and infimum. Therefore we always have that $T_{M, \delta}(K)<\infty$ and $T_{m, \delta}(K)>0$ (see \cite[Chapter 8]{FreidlinWentzell1998}).

Now, suppose $(z,0)\in\Gamma$ is  such that
\begin{equation}
z\geq\max_{i=1,\cdots,m}H(x_{i})+1,\label{eq: cutoff point local condition}
\end{equation}
where $x_{1,}\cdots,x_{m}$ are the critical points of the Hamiltonian
$H$. We define the stopping time 
\[
\rho_{\epsilon,z}:=\inf\left\{ t\geq0:H(X_{\epsilon}(t))\geq z\right\} ,
\]
which is finite almost surely by Theorem \ref{thm: Kernels upper bound}. 
It is proved in \cite[Lemma 8.3.2]{FreidlinWentzell1998} that for any compact set $K\in \mathbb{R}^2$, there exists a $\e_0>0$ such that the family of distributions corresponding to the processes $\{\Pi(X_{\epsilon}(\cdot)) : \epsilon\in(0,\epsilon_{0}), X_\epsilon (0)\in K \}$
is tight in $C([0,T];\Gamma)$ for every $T>0$. This implies that for any given $\eta>0$ and $T>0$, there exists  $(z_{\eta},0)\in\Gamma$
satisfying (\ref{eq: cutoff point local condition}) such that
\[
\sup_{x\in K,0<\epsilon\leq\epsilon_{0}}\mathbf{P}_{x}\left\{ \sup_{t\leq T} H(X_\e(t))\geq z_\eta \right\} \leq\eta,
\]
which is equivalent to 
\begin{equation}\label{eq: cutoff out of a large set}
\sup_{x\in K,0<\epsilon\leq\epsilon_{0}}\mathbf{P}_{x}\left\{ \rho_{\epsilon,z_{\eta}}\leq T\right\} \leq\eta,
\end{equation}
i.e., the probability of  processes $X_\epsilon (t)$ hitting the level curve $C(z_\eta)$ before time $T$ are uniformly less than $\eta$ for any initial data $x\in K$ and $1<\epsilon<\epsilon_0$.
Given $\eta>0$ and  $z_{\eta}$ as  in \eqref{eq: cutoff out of a large set},
for any $0<\delta'<\delta$, define 
\begin{equation}
\sigma_{n}^{\epsilon,\eta,\delta,\delta'}:=\inf\left\{ t\geq\tau_{n}^{\epsilon,\eta,\delta,\delta'}:X_{\epsilon}(t)\in G(\pm\delta)^{c}\right\} ,\label{eq: time left neighborhood of vertices}
\end{equation}
and
\begin{equation}
\tau_{n}^{\epsilon,\eta,\delta,\delta'}:=\inf\left\{ t\geq\sigma_{n-1}^{\epsilon,\eta,\delta,\delta'}:X_{\epsilon}(t)\in D(\pm\delta')\cup C(z_{\eta})\right\} .\label{eq: time enter a smaller neighborhood of vertices}
\end{equation}
We set $\tau_{0}^{\epsilon,\eta,\delta,\delta'}=0$. After the process
$X_{\epsilon}(t)$ reaches $C(z_{\eta})$, all $\tau_{n}^{\epsilon,\eta,\delta,\delta'}$
and $\sigma_{n}^{\epsilon,\eta,\delta,\delta'}$ are taken equal $\rho_{\epsilon,z_{\eta}}$.

\subsection{A weaker type of convergence for the semigroup}
\label{WeakConvergSemigroup}
In order to prove Theorem \ref{teo1},
it is sufficient  to prove 
\begin{equation}\label{eq: teo equivalent}
\lim_{\epsilon\rightarrow0}\sup_{x\in K}\left|\int_{0}^{T}\left[\mathbf{E}_{x}u(X_{\epsilon}(t))-\bar{\mathbf{E}}_{\Pi(x)}u^{\wedge}(\bar{Y}(t))\right]\theta(t)dt\right|=0.
\end{equation}
Actually, if this is the case we can use $\int_{\tau}^{T}=\int_{0}^{T}-\int_{0}^{\tau}$ to
obtain \eqref{eq: weak convergence main result}.

Thanks to \cite[Lemma A.3]{CerraiFreidlin2019}, for any $0<\tau<T$ and
$x\in\rtwo$, we have 
\[
\lim_{\epsilon\rightarrow0}\sup_{t\in[\tau,T]}\left|\mathbf{E}_{x}(u^{\wedge})^{\vee}(X_{\epsilon}(t))-\bar{\mathbf{E}}_{\Pi(x)}u^{\wedge}(\bar{Y}(t))\right|=0.
\]
In fact, given a compact subset $K\subset \rtwo$, we further have that
\begin{equation}
\lim_{\epsilon\rightarrow0}\sup_{x\in K,t\in[\tau,T]}\left|\mathbf{E}_{x}(u^{\wedge})^{\vee}(X_{\epsilon}(t))-\bar{\mathbf{E}}_{\Pi(x)}u^{\wedge}(\bar{Y}(t))\right|=0.\label{eq: uniform convergence local equation}
\end{equation}
Notice that we have the decomposition below
\begin{align*}
\int_{0}^{T}\left[\mathbf{E}_{x}u(X_{\epsilon}(t))-\bar{\mathbf{E}}_{\Pi(x)}u^{\wedge}(\bar{Y}(t))\right]\theta(t)dt & =\int_{0}^{T}\left[\mathbf{E}_{x}u(X_{\epsilon}(t))-\mathbf{E}_{x}(u^{\wedge})^{\vee}(X_{\epsilon}(t))\right]\theta(t)dt\\
 & \quad+\int_{0}^{\tau}\left[\mathbf{E}_{x}(u^{\wedge})^{\vee}(X_{\epsilon}(t))-\bar{\mathbf{E}}_{\Pi(x)}u^{\wedge}(\bar{Y}(t))\right]\theta(t)dt\\
 & \quad+\int_{\tau}^{T}\left[\mathbf{E}_{x}(u^{\wedge})^{\vee}(X_{\epsilon}(t))-\bar{\mathbf{E}}_{\Pi(x)}u^{\wedge}(\bar{Y}(t))\right]\theta(t)dt\\
 & =:I_{1}+I_{2}+I_{3}.
\end{align*}
Since $u$, $(u^{\wedge})^{\vee}$ and $\varphi$ are bounded, we
can choose $\tau$ small enough to control $I_{2}$. Then using (\ref{eq: uniform convergence local equation})
we can control $I_{3}$. Hence, in order to obtain \eqref{eq: weak convergence main result},  it is enough to prove the following result.

\begin{lem}
Suppose $K$ is a compact subset of $\rtwo$. Then, for every  $T>0$ and $\theta\in C([0,T])$ it holds that
\begin{equation}
\label{cx43}
\lim_{\e\to 0} \sup_{x\in K}\left|\int_{0}^{T}\left[\mathbf{E}_{x}u(X_{\epsilon}(t))-\mathbf{E}_{x}(u^{\wedge})^{\vee}(X_{\epsilon}(t))\right]\theta(t)dt\right|=0.\end{equation}
\end{lem}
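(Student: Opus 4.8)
The plan is to reduce \eqref{cx43} to an averaging estimate for $g:=u-(u^{\wedge})^{\vee}$. The key structural fact is that $g$ has vanishing average on every level-set component: $g^{\wedge}=u^{\wedge}-((u^{\wedge})^{\vee})^{\wedge}=u^{\wedge}-u^{\wedge}=0$, since $(f^{\vee})^{\wedge}=f$ (take $u\equiv1$ in the identity $(f^{\vee}u)^{\wedge}=fu^{\wedge}$ of Proposition \ref{prop:weightedSpaces}). I would first excise neighborhoods of the vertices and of infinity, where no bounded corrector is available, and then treat the remaining ``bulk'' part by a corrector/It\^o argument.

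Fix $\eta>0$ and the level $z_{\eta}$ from \eqref{eq: cutoff out of a large set}. For $\delta>0$ choose a cutoff $\chi:\Gamma\to[0,1]$, smooth on each edge, equal to $1$ on $\{H\le z_{\eta}-1\}\setminus G(\pm2\delta)$ and vanishing on $G(\pm\delta)\cup\{H\ge z_{\eta}\}$. Since $\chi^{\vee}$ is constant on each $C_{k}(z)$, the bulk part $g_{1}:=\chi^{\vee}g$ still satisfies $g_{1}^{\wedge}=\chi g^{\wedge}=0$, while $g_{2}:=(1-\chi^{\vee})g$ is supported in $A_{\delta}\cup\{H>z_{\eta}-1\}$, with $A_{\delta}:=G(\pm2\delta)\cap\{H\le z_{\eta}\}$. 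The $g_{2}$-contribution needs only boundedness: bounding $|g_{2}|\le|g|_{\infty}$ and using that $G_{\e}(t,x,y)$ is a probability density with $G_{\e}(t,x,y)\le C/t$ (Theorem \ref{thm: Kernels upper bound} and Corollary \ref{cor1}), for any Borel $B$ one has
\[
\mathbf{E}_{x}\!\int_{0}^{T}\!\mathbf{1}_{B}(X_{\e}(t))\,dt=\int_{0}^{T}\!\!\int_{B}G_{\e}(t,x,y)\,dy\,dt\le\int_{0}^{T}\min\{1,C|B|/t\}\,dt\le C|B|\Big(1+\log^{+}\tfrac{T}{C|B|}\Big),
\]
uniformly in $x$ and $\e$. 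By the co-area identity $|A_{\delta}|=\int\big(\sum_{k}T_{k}(z)\big)\mathbf{1}_{A_{\delta}}\,dz\to0$ as $\delta\to0$ (the periods being integrable near the critical values), while the occupation time of $\{H>z_{\eta}-1\}$ is at most $T\eta$ by \eqref{eq: cutoff out of a large set}. Hence, for a prescribed accuracy, I would first fix $\eta$ and then $\delta$ so that the $g_{2}$-term in \eqref{cx43} is uniformly small over $x\in K$ and $\e>0$.

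With $\delta$ fixed, $g_{1}$ is supported on the compact set $\mathrm{supp}\,\chi^{\vee}$, which lies away from the critical points of $H$. There $u$ is uniformly continuous, so I may mollify it to a $C^{2}$ function $\tilde u$ and, by $|\mathbf{E}_{x}v(X_{\e}(\cdot))|\le|v|_{\infty}$, reduce to $g_{1}\in C^{2}_{c}(\rtwo)$ at the cost of an error $\le 2T|\theta|_{\infty}|u-\tilde u|_{\infty}$ on the support, which is negligible. I would then solve the corrector problem $\langle\nabla^{\perp}H,\nabla w\rangle=g_{1}$ by integrating $g_{1}$ along the Hamiltonian flow from a smooth section of each edge; the condition $\oint g_{1}\,d\mu_{z,k}=0$ makes $w$ single-valued and periodic on each orbit, and, away from the critical set, smooth dependence of the flow and of $T_{k}(z)$ on $x$ yields $w\in C^{2}_{b}(\rtwo)$ with $|w|_{\infty}\le|g_{1}|_{\infty}\sup T_{k}=:C_{\delta}<\infty$ and $\Delta w$ bounded. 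Applying It\^o's formula to $w(X_{\e}(t))$ and using $L_{\e}w=\tfrac12\Delta w+\tfrac1\e g_{1}$, the martingale part having zero mean, gives
\[
\mathbf{E}_{x}\!\int_{0}^{t}g_{1}(X_{\e}(s))\,ds=\e\Big[\mathbf{E}_{x}w(X_{\e}(t))-w(x)-\tfrac12\mathbf{E}_{x}\!\int_{0}^{t}\Delta w(X_{\e}(s))\,ds\Big],
\]
so $F_{\e}(t):=\mathbf{E}_{x}\int_{0}^{t}g_{1}(X_{\e}(s))\,ds$ obeys $\sup_{t\in[0,T]}|F_{\e}(t)|\le C_{\delta}\,\e\,(1+T)$, uniformly in $x\in K$. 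Integrating by parts in time,
\[
\int_{0}^{T}\mathbf{E}_{x}g_{1}(X_{\e}(t))\,\theta(t)\,dt=F_{\e}(T)\theta(T)-\int_{0}^{T}F_{\e}(t)\,\theta'(t)\,dt,
\]
which is $O_{\delta}(\e)$ for $\theta\in C^{1}$; a general $\theta\in C([0,T])$ is handled by uniform approximation by $C^{1}$ functions, the error being controlled by $|\mathbf{E}_{x}g_{1}(X_{\e}(t))|\le|g_{1}|_{\infty}$. Letting $\e\to0$ with $\delta,\eta$ fixed then yields \eqref{cx43}.

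The main obstacle is the construction and $C^{2}$-regularity of the corrector $w$ near the singular orbits, together with the fact that $|w|_{\infty}\lesssim\sup T_{k}(z)$ blows up as $z$ approaches the critical values: no globally bounded corrector exists, which is exactly why the vertex neighborhoods must be removed by the occupation-time estimate. This degeneracy is also the conceptual reason that, in the absence of Assumption \ref{Assumption 2}, only the time-integrated limit \eqref{cx43} is attainable: the corrector identity bounds $\int_{0}^{t}g(X_{\e})\,ds$ but provides no pointwise-in-$t$ control of $\mathbf{E}_{x}g(X_{\e}(t))$.
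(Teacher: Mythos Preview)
Your argument is correct and follows a genuinely different route from the paper's. The paper decomposes the \emph{time} axis via the stopping times $\sigma_i,\tau_i$ that track excursions of $X_\e$ into and out of the vertex neighborhoods $G(\pm\delta)$: the near-vertex contribution is controlled by the expected-duration bound $\sup_{y\in D(\pm\delta/2)}\mathbf{E}_y\sigma_0\le C\delta^2|\log\delta|$ combined with $\sum_i\mathbf{E}_xe^{-\tau_i}\le C/\delta$, while on each away-from-vertex excursion (Lemma~\ref{lem: estimate on each edge}) the process is compared to the deterministic fast motion $x_\e$ over successive periods, yielding $O(\sqrt\e)$ per excursion. You decompose in \emph{space} instead, via the cutoff $\chi^\vee$, and treat the bulk by the classical corrector method: solve $\langle\nabla^\perp H,\nabla w\rangle=g_1$ along the flow and apply It\^o's formula. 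This is arguably cleaner---it gives rate $O(\e)$ rather than $O(\sqrt\e)$ on the bulk, avoids the stopping-time bookkeeping, and makes explicit why only the time-integrated limit is accessible (you control $F_\e(t)$, not its derivative). The paper's excursion argument, on the other hand, reuses the Freidlin--Wentzell estimates already available from \cite{FreidlinWentzell1998} and \cite{CerraiFreidlin2019} and does not require building a $C^2$ corrector, whose regularity (though routine here since $H\in C^4$ and $\mathrm{supp}\,\chi^\vee$ avoids critical points) must be verified. One minor slip: \eqref{eq: cutoff out of a large set} bounds the probability of reaching level $z_\eta$, not $z_\eta-1$, so to control the contribution from $\{H>z_\eta-1\}$ you should either let $\chi\equiv1$ on $\{H\le z_\eta\}$ and vanish on $\{H\ge z_\eta+1\}$, or invoke tightness at the lower level; either fix is immediate.
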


\begin{proof}
Actually we can assume that $u\in C_{b}^{1}(\rtwo)$ (and $\theta\in C^{1}([0,T])$),
because for any $u\in C_{b}(\rtwo)$ (and $\theta\in C([0,T])$)
we can find an approximation sequence $\left\{ u_{n}\right\} \subset C_{b}^{1}(\rtwo)$
(and $\left\{ \theta_{n}\right\} \subset C^{1}([0,T])$) such
that $u_{n}\rightarrow u$ in $L^{\infty}(\rtwo)$ (and $\theta_{n}\rightarrow\theta$
in $L^{\infty}([0,T]$). 

Since $u\in C_{b}^{1}(\rtwo)$ and $\theta\in C^{1}([0,T])$,
we can define
\[M_1:=\Vert u-(u^{\wedge})^{\vee}\Vert_{L^{\infty}},\ \ \ 
M_2:=\max\left\{ \Vert\theta\Vert_{L^{\infty}},\Vert\theta'\Vert_{L^{\infty}}\right\},\ \ \ M_3:=\Vert\nabla u\Vert_{L^{\infty}}.\]
 By \eqref{eq: cutoff out of a large set}, for every $\eta>0$
we can choose $z_{\eta}$ large enough such that $\sup_{x\in K}\mathbf{P}_{x}(\rho_{\epsilon,z_{\eta}}\leq T)\leq\eta$. Hence,
\begin{align*}
\mathbf{E}_{x}\int_{0}^{T}\left[u(X_{\epsilon}(t))-(u^{\wedge})^{\vee}(X_{\epsilon}(t))\right]\theta(t)dt & =\mathbf{E}_{x}\int_{0}^{T\wedge\rho_{\epsilon,z_{\eta}}}\left[u(X_{\epsilon}(t))-(u^{\wedge})^{\vee}(X_{\epsilon}(t))\right]\theta(t)dt\\
 & \quad+\mathbf{E}_{x}\int_{T\wedge\rho_{\epsilon,z_{\eta}}}^{T}\left[u(X_{\epsilon}(t))-(u^{\wedge})^{\vee}(X_{\epsilon}(t))\right]\theta(t)dt.
\end{align*}
For the second term on the right hand side, 
\[
\sup_{x\in K}\left|\mathbf{E}_{x}\int_{T\wedge\rho_{\epsilon,z_{\eta}}}^{T}\left[u(X_{\epsilon}(t))-(u^{\wedge})^{\vee}(X_{\epsilon}(t))\right]\theta(t)dt\right|\leq TM_{1}M_{2}\mathbf{P}_{x}(\rho_{\epsilon,z_{\eta}}\leq T)\leq TM_{1}M_{2}\eta.
\]
Now we fixed $z_{\eta}$ and the stoping time $\rho_{\epsilon,z_{\eta}}$.
For the stopping times $\tau_{i}^{\epsilon,\eta,\delta,\delta/2}$
and $\sigma_{i}^{\epsilon,\eta,\delta,\delta/2}$ defined in (\ref{eq: time left neighborhood of vertices})
and (\ref{eq: time enter a smaller neighborhood of vertices}) with
$\delta'=\frac{\delta}{2}$, we set $\tau_{i}=\tau_{i}^{\epsilon,\eta,\delta,\delta/2}\wedge T\wedge\rho_{\epsilon,z_{\eta}}$
and $\sigma_{i}=\sigma_{i}^{\epsilon,\eta,\delta,\delta/2}\wedge T\wedge\rho_{\epsilon,z_{\eta}}$.
Then, recalling that $\tau_0=0$, we have  
\begin{align*}\mathbf
\mathbf{E}_{x}\int_{0}^{T\wedge\rho_{\epsilon,z_{\eta}}}\left[u(X_{\epsilon}(t))-(u^{\wedge})^{\vee}(X_{\epsilon}(t))\right]\theta(t)dt & =\sum_{i=0}^{\infty}\mathbf{E}_{x}\int_{\tau_{i}}^{\sigma_{i}}\left[u(X_{\epsilon}(t))-(u^{\wedge})^{\vee}(X_{\epsilon}(t))\right]\theta(t)dt\\
 & \quad+\sum_{i=0}^{\infty}\mathbf{E}_{x}\int_{\sigma_{i}}^{\tau_{i+1}}\left[u(X_{\epsilon}(t))-(u^{\wedge})^{\vee}(X_{\epsilon}(t))\right]\theta(t)dt.
\end{align*}
For the first term on the right hand side, we have
\begin{align*}
 & \quad\left|\sum_{i=0}^{\infty}\mathbf{E}_{x}\int_{\tau_{i}}^{\sigma_{i}}\left[u(X_{\epsilon}(t))-(u^{\wedge})^{\vee}(X_{\epsilon}(t))\right]\theta(t)dt\right|\\
 & \leq M_{1}M_{2}\sum_{i=0}^{\infty}\mathbf{E}_{x}\left[\sigma_{i}-\tau_{i}\right]\\
 & \leq M_{1}M_{2}\sum_{i=0}^{\infty}\mathbf{P}_x(\tau_{i}^{\epsilon,\eta,\delta,\delta/2}<T)\left[\sup_{y\in D(\pm\delta/2)}\mathbf{E}_{y}\sigma_{0}^{\epsilon,\eta,\delta,\delta/2}\right]\\
 & \leq M_{1}M_{2}\left[\sup_{y\in D(\pm\delta/2)}\mathbf{E}_{y}\sigma_{0}^{\epsilon,\eta,\delta,\delta/2}\right]e^{T}\sum_{i=0}^{\infty}\mathbf{E}_{x}e^{-\tau_{i}^{\epsilon,\eta,\delta,\delta/2}}.
\end{align*}
Recall that from \cite[(8.3.14)]{FreidlinWentzell1998} and \cite[A.19]{CerraiFreidlin2019}
we have
\[
\sup_{x\in K}\sum_{i=0}^{\infty}\mathbf{E}_{x}e^{-\tau_{i}^{\epsilon,\eta,\delta,\delta/2}}\leq\frac{C}{\delta},
\]
and from \cite[(8.5.17)]{FreidlinWentzell1998} and \cite[A.21]{CerraiFreidlin2019}
we have
\[
\sup_{y\in D(\pm\delta/2)}\mathbf{E}_{y}\sigma_{0}^{\epsilon,\eta,\delta,\delta/2}\leq C\delta^{2}\vert\log\delta\vert.
\]
Since there exists  $\delta_{0}>0$ such that $C\delta\vert\log\delta\vert<\eta$
for all $0<\delta<\delta_{0}$, from the inequality above we get
\begin{equation}
\label{cx45}
\sup_{x\in K}\left|\sum_{i=0}^{\infty}\mathbf{E}_{x}\int_{\tau_{i}}^{\sigma_{i}}\left[u(X_{\epsilon}(t))-(u^{\wedge})^{\vee}(X_{\epsilon}(t))\right]\theta(t)dt\right|\leq CM_{1}M_{2}e^{T}\delta\vert\log\delta\vert<M_{1}M_{2}e^{T}\eta.
\end{equation}
Using Lemma \ref{lem: estimate on each edge} below we have 
\begin{align*}
 & \quad\sup_{x\in K}\left|\sum_{i=0}^{\infty}\mathbf{E}_{x}\int_{\sigma_{i}}^{\tau_{i+1}}\left[u(X_{\epsilon}(t))-(u^{\wedge})^{\vee}(X_{\epsilon}(t))\right]\theta(t)dt\right|\\
 & \leq\sum_{i=0}^{\infty}\mathbf{P}_x(\sigma_{i}^{\epsilon,\eta,\delta,\delta/2}\leq T)\sup_{x\in K}\left|\mathbf{E}_{x}\int_{\sigma_{i}}^{\tau_{i+1}}\left[u(X_{\epsilon}(t))-(u^{\wedge})^{\vee}(X_{\epsilon}(t))\right]\theta(t)dt\right|\\
 & \leq C\,\sum_{i=0}^{\infty}\mathbf{P}_x(\sigma_{i}^{\epsilon,\eta,\delta,\delta/2}\leq T)\sqrt{\epsilon}\\
 & \leq C\left( 1+\sum_{i=0}^{\infty}\mathbf{P}_x(\tau_{i}^{\epsilon,\eta,\delta,\delta/2}\leq T)\right) \sqrt{\epsilon}\\
 & \leq C\,e^{T}\left(\frac{C}{\delta}+1\right)\sqrt{\epsilon}.
\end{align*}
This implies that we can find $\epsilon_{0}>0$ small enough such that for all $0<\epsilon\leq\epsilon_{0}$
\[
\sup_{x\in K}\left|\sum_{i=0}^{\infty}\mathbf{E}_{x}\int_{\sigma_{i}}^{\tau_{i+1}}\left[u(X_{\epsilon}(t))-(u^{\wedge})^{\vee}(X_{\epsilon}(t))\right]\theta(t)dt\right|\leq\eta.
\]
This, together with \eqref{cx45} gives \eqref{cx43}.
\end{proof}

\begin{lem}
\label{lem: estimate on each edge}
For every given $\delta>0$ and each $i\in\mathbf{N}$, we
have  
\[
\sup_{x\in K}\left|\mathbf{E}_{x}\int_{\sigma_{i}}^{\tau_{i+1}}\left[u(X_{\epsilon}(t))-(u^{\wedge})^{\vee}(X_{\epsilon}(t))\right]\theta(t)dt\right|\leq C\,\sqrt{\epsilon}.
\]
where the constant $C$ depends on $M_{j}$ with $j=1,2,3$,  $T$, $T_{M,
\delta/2}:=T_{M,
\delta/2}(\overline{G(0,z_{\eta})})$,
and $T_{m, \delta/2}:=T_{m, \delta/2}(\overline{G(0,z_{\eta})})$ and 
\[M_{4,\delta}:=\sup_{x \in\,G(0,z_{\eta})\backslash G(\pm\delta/2)}|\nabla((u^{\wedge})^{\vee})(x)|<\infty.\]

\end{lem}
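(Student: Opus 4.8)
The plan is to kill the oscillation of $u$ about its orbit--average by a corrector function and then to pay for the low regularity of $u$ through a mollification that produces the gain $\sqrt\e$.

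\emph{Confinement.} First I would record that, by the very definitions \eqref{eq: time left neighborhood of vertices}--\eqref{eq: time enter a smaller neighborhood of vertices}, on the stochastic interval $[\sigma_i,\tau_{i+1}]$ the trajectory $X_\e(t)$ stays in a single edge--region and inside $\overline{G(0,z_\eta)}\setminus G(\pm\delta/2)$: between $\sigma_i$ and $\tau_{i+1}$ one has $d(\Pi(X_\e(t)),O_j)>\delta/2$ for every vertex $O_j$ and $H(X_\e(t))<z_\eta$, so $X_\e$ can neither pass through a vertex nor reach the cut level. On this set the level sets are smooth periodic orbits of \eqref{eq: Hamilton's equation}, $\vert\nabla H\vert$ is bounded away from $0$ and $\infty$, $T_{m,\delta/2}\le T(x)\le T_{M,\delta/2}$, and, because $H\in C^{4}$, both $z\mapsto T_k(z)$ and the action--angle change of variables $x\mapsto(z,\phi)$ (with $z=H(x)$ and $\phi$ the flow time along the orbit) are of class $C^{2}$ with bounds depending only on the geometry of $H$ there.

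\emph{Corrector.} For $w\in C_b^{2}(\rtwo)$ set $f_w:=w-(w^{\wedge})^{\vee}$. By the definition of $w^{\wedge}$ this has zero time--average on every orbit, so the transport equation $\langle\nabla^{\perp}H,\nabla v_w\rangle=f_w$ can be solved on each edge--region by integrating $f_w$ along the flow, $v_w(z,\phi)=\int_{0}^{\phi}f_w(z,\phi')\,d\phi'+c(z)$, with $c(z)$ fixed so that $v_w$ has zero mean on each orbit; periodicity in $\phi$ then makes $v_w$ a genuine $C^{2}$ function, which I extend to $C_b^{2}(\rtwo)$. Writing $\nabla v_w=\partial_z v_w\,\nabla H+f_w\,\nabla\phi$ and using that the orbit average is an $L^{\infty}$--contraction, I would obtain $\Vert v_w\Vert_{\infty}\le T_{M,\delta/2}\Vert f_w\Vert_{\infty}$, $\Vert\nabla v_w\Vert_{\infty}\le C(\Vert\nabla w\Vert_{\infty}+\Vert\nabla(w^{\wedge})^{\vee}\Vert_{\infty})$ and $\Vert\nabla^{2} v_w\Vert_{\infty}\le C\Vert\nabla^{2} w\Vert_{\infty}$, where $C$ depends only on $T_{m,\delta/2}$, $T_{M,\delta/2}$ and the bounded $C^{2}$ data of $H$ on the region.

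\emph{It\^o and regularization.} Next I would apply It\^o's formula to $\e\,v_w(X_\e(t))\theta(t)$ on $[\sigma_i,\tau_{i+1}]$. The singular drift cancels because $\langle\nabla^{\perp}H,\nabla v_w\rangle=f_w$, and since $\nabla v_w$ is bounded the stochastic term is a martingale up to the bounded stopping time $\tau_{i+1}$, so taking expectations yields
\[
\mathbf{E}_x\int_{\sigma_i}^{\tau_{i+1}}\theta f_w(X_\e)\,dt=\mathbf{E}_x\big[\e v_w\theta\big]_{\sigma_i}^{\tau_{i+1}}-\frac\e2\mathbf{E}_x\int_{\sigma_i}^{\tau_{i+1}}\theta\,\Delta v_w\,dt-\e\,\mathbf{E}_x\int_{\sigma_i}^{\tau_{i+1}}v_w\,\theta'\,dt.
\]
With $\tau_{i+1}-\sigma_i\le T$ this is $O(\e)+O(\e\Vert\Delta v_w\Vert_{\infty})$. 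Since only $u\in C_b^{1}$ is available (after the reduction in the previous lemma), I would take $w=u^{\rho}$, a mollification at scale $\rho$, so that $\Vert\nabla^{2}u^{\rho}\Vert_{\infty}\le CM_3/\rho$ while $\Vert u-u^{\rho}\Vert_{\infty}\le CM_3\rho$; the $L^{\infty}$--contraction property gives $\Vert f_u-f_{u^{\rho}}\Vert_{\infty}\le CM_3\rho$, hence an $O(M_2M_3\rho T)$ error when replacing $u$ by $u^{\rho}$, the first derivatives of $u^{\rho}$ and $(u^{\rho}{}^{\wedge})^{\vee}$ being bounded uniformly in $\rho$ by $M_3$ and $M_{4,\delta}$. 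Combining,
\[
\sup_{x\in K}\Big\vert\mathbf{E}_x\int_{\sigma_i}^{\tau_{i+1}}\theta\big(u-(u^{\wedge})^{\vee}\big)(X_\e)\,dt\Big\vert\le C\Big(\e+\frac\e\rho+\rho\Big),
\]
with $C=C(M_1,M_2,M_3,M_{4,\delta},T_{M,\delta/2},T_{m,\delta/2},T)$; the choice $\rho=\sqrt\e$ gives the claimed $C\sqrt\e$.

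\emph{Main obstacle.} The hard part is the corrector estimates: controlling the Euclidean first and second derivatives of $v_w$ uniformly on $\overline{G(0,z_\eta)}\setminus G(\pm\delta/2)$ in terms of first derivatives of $w$ and $(w^{\wedge})^{\vee}$ and of $\nabla^{2}w$ alone. This is exactly where $H\in C^{4}$, the two--sided period bounds and the quantity $M_{4,\delta}$ enter, through the smoothness and boundedness of the action--angle map and of $z\mapsto T_k(z)$ away from the separatrices; the removal of $G(\pm\delta/2)$ is what prevents these objects from degenerating. Once $\Vert\Delta v_{u^{\rho}}\Vert_{\infty}=O(1/\rho)$ is in hand, balancing it against the $O(\rho)$ mollification error is what forces the rate $\sqrt\e$ rather than the naive $\e$.
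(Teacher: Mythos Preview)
Your corrector/Poisson-equation argument is correct and genuinely different from the paper's proof. The paper proceeds by an elementary period-by-period comparison: it partitions $[\sigma_i,\tau_{i+1}]$ into random subintervals $[s_k,s_{k+1}]$ of length $\epsilon T(X_\epsilon(s_k))$, couples $X_\epsilon$ on each such interval with the deterministic orbit $x_\epsilon$ of \eqref{eq: deterministic fast motion} started at $X_\epsilon(s_k)$, and uses $\mathbf{E}|X_\epsilon(t)-x_\epsilon(t)|\le C\sqrt{\epsilon s}$ together with the exact vanishing of $\int_0^{\epsilon T(x)}[u(x_\epsilon)-(u^\wedge)^\vee(x_\epsilon)]\,dt$ over a full period; the $\sqrt\epsilon$ then arises from summing $N\sim\epsilon^{-1}$ contributions of size $\epsilon^{3/2}$. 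This route needs only $u\in C_b^1$ and no mollification or auxiliary PDE. Your route is the perturbed-test-function method: the transport equation $\langle\nabla^\perp H,\nabla v_w\rangle=f_w$ absorbs the singular drift, and the $\sqrt\epsilon$ comes from balancing the $\epsilon\|\Delta v_{u^\rho}\|_\infty=O(\epsilon/\rho)$ remainder against the $O(\rho)$ mollification error. One small imprecision: your bound $\|\nabla^2 v_w\|_\infty\le C\|\nabla^2 w\|_\infty$ should in principle also carry $\|\nabla^2(w^\wedge)^\vee\|_\infty$, but for $w=u^\rho$ that term is itself $O(M_3/\rho)$ on the confined region, so the conclusion $\|\Delta v_{u^\rho}\|_\infty=O(1/\rho)$ stands. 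What each approach buys: the paper's argument is shorter and avoids the action--angle regularity bookkeeping you flag as the ``main obstacle''; your argument is more structural, generalizes readily (e.g.\ to non-constant diffusion coefficients), and makes transparent why the rate cannot beat $\sqrt\epsilon$ for merely $C^1$ data.
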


\begin{proof}
We introduce the following sequence of stoping times 
$\sigma_{i}=s_{0}\leq s_{1}\leq s_{2}\leq\cdots\leq s_{\nu}=\tau_{i+1},$
by setting 
\[s_{k+1}=\left[s_{k}+\epsilon T(X_{\epsilon}(s_{k}))\right]\wedge\tau_{i+1},\ \ \ \ k=1,\ldots,\nu-1.\]
Notice that we must have $\nu\leq N:=[\e^{-1} T/T_{m, \delta/2}]+1$, since $\vert\tau_{i+1}-\sigma_{i}\vert\leq T$.
Then we have
\[
\left|\mathbf{E}_{x}\int_{\sigma_{i}}^{\tau_{i+1}}\left[u(X_{\epsilon}(t))-(u^{\wedge})^{\vee}(X_{\epsilon}(t))\right]\theta(t)dt\right|\leq\sum_{k=0}^{N-1}\left|\mathbf{E}_{x}\int_{s_{k}}^{s_{k+1}}\left[u(X_{\epsilon}(t))-(u^{\wedge})^{\vee}(X_{\epsilon}(t))\right]\theta(t)dt\right|.
\]
For each $k$, we have that
\begin{align*}
 & \quad\left|\mathbf{E}_{x}\int_{s_{k}}^{s_{k+1}}\left[u(X_{\epsilon}(t))-(u^{\wedge})^{\vee}(X_{\epsilon}(t))\right]\theta(t)dt\right|\\
 & \leq\left|\mathbf{E}_{x}\int_{s_{k}}^{s_{k}+\epsilon T(X_{\epsilon}(s_{k}))}\left[u(X_{\epsilon}(t))-(u^{\wedge})^{\vee}(X_{\epsilon}(t))\right]\theta(t)dt\cdot1_{\{s_{k}+\epsilon T(X_{\epsilon}(s_{k}))<\tau_{i+1}\}}\right|\\
 & \quad+\left|\mathbf{E}_{x}\int_{s_{k}}^{\tau_{i+1}}\left[u(X_{\epsilon}(t))-(u^{\wedge})^{\vee}(X_{\epsilon}(t))\right]\theta(t)dt\cdot1_{\{s_{k}+\epsilon T(X_{\epsilon}(s_{k}))\geq\tau_{i+1},s_{k}<\tau_{i+1}\}}\right|\\
 & =:I_{1}+I_{2}.
\end{align*}
By the definition of $\sigma_{i}$ and $\tau_{i+1}$, $X_{\epsilon}(s_{k})\in G(0,z_{\eta})\backslash G(\pm\delta/2)$.
For $I_{2}$, we have 
\[\vert\tau_{i+1}-s_{k}\vert\leq\epsilon T(X_{\epsilon}(s_{k}))\leq\epsilon T_{M, \delta/2}(G(0,z_{\eta})),\]
which implies that
\[
I_{2}\leq\mathbf{P}_x\{s_{k}+\epsilon T(X_{\epsilon}(s_{k}))\geq\tau_{i+1},s_{k}<\tau_{i+1}\}M_{1}M_{2}T_{M,\delta/2}\epsilon.
\]
For $I_{1}$, we use the decomposition  
\begin{align*}
u(X_{\epsilon}(t))-(u^{\wedge})^{\vee}(X_{\epsilon}(t)) & =\left[u(X_{\epsilon}(t))-u(x_{\epsilon}(t))+(u^{\wedge})^{\vee}(x_{\epsilon}(t))-(u^{\wedge})^{\vee}(X_{\epsilon}(t))\right]\\
 & \quad+\left[u(x_{\epsilon}(t))-(u^{\wedge})^{\vee}(x_{\epsilon}(t))\right]\\
 & =:U_{1}(t)+U_{2}(t),
\end{align*}
where $x_{\epsilon}(t)$ is the deterministic fast motion defined
by (\ref{eq: deterministic fast motion}), with initial condition $x_{\epsilon}(s_{k})=X_{\epsilon}(s_{k})$.
Then 
\begin{align*}
I_{1} & \leq\left|\mathbf{E}_{x}\int_{s_{k}}^{s_{k}+\epsilon T(X_{\epsilon}(s_{k}))}U_{1}(t)\theta(t)dt\cdot1_{\{s_{k}+\epsilon T(X_{\epsilon}(s_{k}))<\tau_{i+1}\}}\right|\\
 & \quad+\left|\mathbf{E}_{x}\int_{s_{k}}^{s_{k}+\epsilon T(X_{\epsilon}(s_{k}))}U_{2}(t)\theta(t)dt\cdot1_{\{s_{k}+\epsilon T(X_{\epsilon}(s_{k}))<\tau_{i+1}\}}\right|\\
 & =:I_{11}+I_{12},
\end{align*}
Since $u=(u^\wedge)^\vee$ on the level set $C_{H(x)}$ and $x_\e(t)$ moves on the same connected components of $C_{H(x)}$, for all $t \in\,[0,\e T(x)]$, 
\[
\int_{0}^{\epsilon T(x)}\left[u(x_{\epsilon}(t))-(u^{\wedge})^{\vee}(x_\e(t))\right]dt=0.
\]
Therefore, we have that
\begin{align*}
I_{12} & \leq\left|\mathbf{E}_{x}\int_{s_{k}}^{s_{k}+\epsilon T(X_{\epsilon}(s_{k}))}U_{2}(t)\left(\theta(t)-\theta(s_{k})\right)dt\cdot1_{\{s_{k}+\epsilon T(X_{\epsilon}(s_{k}))<\tau_{i+1}\}}\right|\\
 & \leq M_{1}M_{2}T_{M, \delta/2}^{2}\epsilon^{2}.
\end{align*}
Since processes $X_{\epsilon}(t)$
and $x_{\epsilon}(t)$ always stay in the region $G(0,z_{\eta})\backslash G(\pm\delta/2)$,
we have 
\[
I_{11}\leq\left|\mathbf{E}_{x}\int_{s_{k}}^{s_{k}+\epsilon T(X_{\epsilon}(s_{k}))}\left(M_3+M_{4, \delta}\right)\vert X_{\epsilon}(t)-x_{\epsilon}(t)\vert M_{2}dt\cdot1_{\{s_{k}+\epsilon T(X_{\epsilon}(s_{k}))<\tau_{i+1}\}}\right|.
\]
It is not difficult to check that
\[
\sup_{x\in G(0,z_{\eta})\backslash G(\pm\delta/2)}\mathbf{E}_{x}\vert X_{\epsilon}(\epsilon s\wedge\tau_{1}^{\epsilon,\eta,\delta,\delta/2})-x_{\epsilon}(\epsilon s\wedge\tau_{1}^{\epsilon,\delta,\delta/2})\vert\leq C(\epsilon s)^{\frac{1}{2}}
\]
for any $s\in[0,T(x)]$.
Then, by the Strong Markov property of the diffusion $X_{\epsilon}(t)$
we have that 
\begin{align*}
I_{11} & \leq \mathbf{P}_x\left( s_{k}+\epsilon T(X_{\epsilon}(s_{k}))<\tau_{i+1}\right) \sup_{x\in G(0,z_{\eta})\backslash G(\pm\delta/2)}\mathbf{E}_{x}\int_{0}^{\epsilon T(x)}(M_{3}+M_{4, \delta})M_{2}\vert X_{\epsilon}(t)-x_{\epsilon}(t)\vert dt \\
 & \leq(M_{3}+M_{4, \delta})M_{2}CT_{M, \delta/2}^{\frac{3}{2}}\epsilon^{\frac{3}{2}}.
\end{align*}
Notice that 
\[\sum_{k=0}^{N-1}\mathbf{P}\{s_{k}+\epsilon T(X_{\epsilon}(s_{k}))\geq\tau_{i+1},s_{k}<\tau_{i+1}\}=1.\]
Now we have 
\begin{align*}
 & \quad\left|\mathbf{E}_{x}\int_{\sigma_{i}}^{\tau_{i+1}}\left[u(X_{\epsilon}(t))-(u^{\wedge})^{\vee}(X_{\epsilon}(t))\right]\theta(t)dt\right|\\
 & \leq\sum_{k=0}^{N-1}\left|\mathbf{E}_{x}\int_{s_{k}}^{s_{k+1}}\left[u(X_{\epsilon}(t))-(u^{\wedge})^{\vee}(X_{\epsilon}(t))\right]\theta(t)dt\right|\\
 & \leq\sum_{k=0}^{N-1}\mathbf{P}\{s_{k}+\epsilon T(X_{\epsilon}(s_{k}))\geq\tau_{i+1},s_{k}<\tau_{i+1}\}M_{1}M_{2}T_{M, \delta/2}\,\epsilon\\
 & \quad+N\left[M_{1}M_{2}T_{M, \delta/2}^{2}\epsilon^{2}+(M_{3}+M_{4, \delta})M_{2}CT_{M, \delta/2}^{\frac{3}{2}}\epsilon^{\frac{3}{2}}\right]\\
 & \leq M_{1}M_{2}T_{M, \delta/2}\epsilon+TM_{1}M_{2}T_{M, \delta/2}\epsilon+T(M_{3}+M_{4, \delta})M_{2}CT_{M, \delta/2}^{\frac{1}{2}}\epsilon^{\frac{1}{2}}
\end{align*}
Finally,  as all of the estimates for $I_{11}$, $I_{12}$
and $I_{2}$ are uniform for initial data $x\in K$, our proof
is complete.
\end{proof}

\subsection{The corresponding weaker convergence of the SPDEs}
Now we consider the convergence of the SPDEs based on the convergence of the semigroups obtained in section \ref{WeakConvergSemigroup} without Assumption \ref{Assumption 2}. Notice that in
equation (\ref{SPDE}), the nonlinear functions $b$ and $\sigma$
are assumed to be Lipschitz and hence preserve the strong convergence in
$H_{\gamma}$. In this section, the semigroups converge in a weak
sense, and the nonlinear functions
no longer preserve it. This indicates that we can not obtain the same
convergence result we obtained earlier.
Here, we consider the special case  when $b=0$ and the noise is additive, i.e.
\begin{equation}
\left\{\begin{array}{l}
\partial_{t}u_{\epsilon}(t,x)  =\frac{1}{2}\Delta u_{\epsilon}(t,x)+\frac{1}{\epsilon}\langle\nabla^{\perp}H(x),\nabla u_{\epsilon}(t,x)\rangle+\partial_{t}\mathcal{W}(t,x),\label{eq: linear SPDE}\\
u_{\epsilon}(0,x)  =\varphi(x),\qquad x\in\mathbb{R}^{2},
\end{array}\right.
\end{equation}
and
\begin{equation}
\left\{\begin{array}{l}
\partial_{t}\bar{u}(t,z,k)  =\bar{L}\bar{u}(t,z,k)+\partial_{t}\bar{\mathcal{W}}(t,z,k),\label{eq: limit linear SPDE}\\
\bar{u}(0,z,k)  =\varphi^{\wedge}(z,k),\qquad(z,k)\in\Gamma. 
\end{array}\right.
\end{equation}

Similar to (\ref{eq: mild solution}),  mild solutions to (\ref{eq: linear SPDE}) and (\ref{eq: limit linear SPDE})
are defined to be
\[
u_{\epsilon}(t)=S_{\epsilon}(t)\varphi+\int_{0}^{t}S_{\epsilon}(t-s)d\mathcal{W}(s),
\]
and 
\[
\bar{u}(t)=\bar{S}(t)\varphi^\wedge+\int_{0}^{t}\bar{S}(t-s)d\bar{\mathcal{W}}(s).
\]

In what follows, we define operators
\[
R_{\epsilon}^{\theta}(\tau,T)=\int_{\tau}^{T}S_{\epsilon}(t)\theta(t)dt,
\]
and
\[
\bar{R}^{\theta}(\tau,T)=\int_{\tau}^{T}\bar{S}(t)^\vee\theta(t)dt.
\]
Then (\ref{eq: weak convergence main result}) is equivalent to 
\[
\lim_{\epsilon\rightarrow0}\sup_{x\in K}\left|R_{\epsilon}^{\theta}(\tau,T)u(x)-\bar{R}^{\theta}(\tau,T)u(x)\right|=0.
\]
Recall that $u$ is assumed to be in $C_{b}(\rtwo)$ in Theorem \ref{teo1}.
In the following proposition, we will extend it for $u\in H_{\gamma}$.
\begin{prop}
\label{prop: integral of semigroup converges weighted space}
Under Assumptions \ref{Assumption 1} and  \ref{Assumption 3},
we have
\[
\lim_{\epsilon\rightarrow0}\left|\left[R_{\epsilon}^{\theta}(\tau,T)-\bar{R}^{\theta}(\tau,T)\right]u\right|_{H_{\gamma}}=0
\]
for any $u\in H_{\gamma}(\rtwo)$, $0\leq \tau<T$  and $\theta\in C_{b}([\tau,T])$.
\end{prop}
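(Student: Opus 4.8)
The plan is to upgrade the convergence in Theorem~\ref{teo1}, which holds uniformly on compact sets for $u\in C_b(\rtwo)$, to convergence in the $H_\gamma$-norm for $u\in H_\gamma$. The natural strategy is a three-step approximation: first cut off the spatial tails to reduce to a compact region, then approximate $u\in H_\gamma$ by bounded continuous functions supported there, and finally invoke Theorem~\ref{teo1} on the compact core. Throughout I would use that both $R_\epsilon^\theta(\tau,T)$ and $\bar R^\theta(\tau,T)$ are bounded operators on $H_\gamma$, uniformly in $\epsilon$: indeed, by the triangle inequality in $L^q$-type estimates and \eqref{eq: bounded semigroups}--\eqref{eq: bounded limit semigroups},
\[
\left|R_\epsilon^\theta(\tau,T)u\right|_{H_\gamma}\leq \int_\tau^T |S_\epsilon(t)u|_{H_\gamma}\,|\theta(t)|\,dt\leq \Vert\theta\Vert_{L^\infty}\int_\tau^T e^{Ct/2}\,dt\,|u|_{H_\gamma},
\]
and likewise for $\bar R^\theta$. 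This uniform bound is what lets the density argument go through.

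First I would fix $\delta>0$ and choose $R>0$ large so that $\int_{\{|x|>R\}}|u(x)|^2\gamma^\vee(x)\,dx<\delta$, which is possible since $u\in H_\gamma$ and $\gamma^\vee\in L^1(\rtwo)$. Writing $u=u\mathbf 1_{B_R}+u\mathbf 1_{B_R^c}=:u_R+u_R'$, the uniform operator bound above controls the contribution of $u_R'$: $\bigl|[R_\epsilon^\theta-\bar R^\theta]u_R'\bigr|_{H_\gamma}\leq C|u_R'|_{H_\gamma}\leq C\sqrt\delta$, uniformly in $\epsilon$. Next, on the compact ball $B_R$ I would approximate $u_R\in L^2(B_R)$ in $H_\gamma$-norm by a function $v\in C_b(\rtwo)$ supported in a slightly larger ball; since $\gamma^\vee$ is bounded on $B_R$, $L^2(B_R,dx)$ and $H_\gamma$-convergence on $B_R$ are comparable, so $|u_R-v|_{H_\gamma}<\delta$ can be arranged, and the same uniform operator bound handles $u_R-v$.

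It remains to treat $v\in C_b(\rtwo)$, where Theorem~\ref{teo1} applies. The difficulty is that Theorem~\ref{teo1} gives \emph{pointwise-in-$x$, uniform-on-compacts} convergence of $\bigl[R_\epsilon^\theta-\bar R^\theta\bigr]v(x)$ to zero, whereas I need $H_\gamma$-norm convergence, i.e. an integral over all of $\rtwo$ against $\gamma^\vee$. The point is that the integrand is dominated: by Corollary~\ref{cor1} the kernels $G_\epsilon$ and $\bar G$ satisfy a Gaussian tail bound uniform in $\epsilon$, so $\bigl|[R_\epsilon^\theta-\bar R^\theta]v(x)\bigr|$ is bounded, and in fact decays in $|x|$, by an $\epsilon$-independent $H_\gamma$-integrable function. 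Hence, writing
\[
\Bigl|[R_\epsilon^\theta-\bar R^\theta]v\Bigr|_{H_\gamma}^2=\int_{\rtwo}\bigl|[R_\epsilon^\theta-\bar R^\theta]v(x)\bigr|^2\gamma^\vee(x)\,dx,
\]
I would split the integral over a large compact set $K$ and its complement. On $K^c$ the uniform kernel tail bound (together with $\gamma^\vee\in L^1$) makes the tail contribution small, uniformly in $\epsilon$; on the compact set $K$, Theorem~\ref{teo1} gives $\sup_{x\in K}\bigl|[R_\epsilon^\theta-\bar R^\theta]v(x)\bigr|\to 0$, so the dominated convergence theorem (with the $\epsilon$-uniform dominating function from Corollary~\ref{cor1}) forces the $K$-part to zero. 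Combining the three pieces and letting $\delta\to 0$ yields the claim.

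\medskip

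The main obstacle I anticipate is the passage from pointwise convergence to $H_\gamma$-norm convergence for $v\in C_b$: one must produce a single $\epsilon$-independent dominating function in $H_\gamma$ to justify dominated convergence. This is exactly where Corollary~\ref{cor1} is essential, since it furnishes the uniform (in $\epsilon$) Gaussian decay of the kernels; without it one could not rule out mass escaping to infinity as $\epsilon\to 0$. The density and cutoff steps are routine once the uniform operator bound on $R_\epsilon^\theta$ and $\bar R^\theta$ is in hand.
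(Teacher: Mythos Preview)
Your proposal is correct and follows the same two-step line as the paper: reduce to $v\in C_b(\rtwo)$ by density in $H_\gamma$ together with the uniform operator bound on $R_\epsilon^\theta$ and $\bar R^\theta$, then upgrade the pointwise convergence of Theorem~\ref{teo1} to $H_\gamma$-convergence via dominated convergence. The only minor excess is invoking Corollary~\ref{cor1} for the dominating function; since $|S_\epsilon(t)v(x)|\leq\|v\|_\infty$ and $|\bar S(t)^\vee v(x)|\leq\|v\|_\infty$ while $\gamma^\vee\in L^1(\rtwo)$, a constant already serves as an $\epsilon$-independent $H_\gamma$-dominating function.
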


\begin{proof}
Since $\left[R_{\epsilon}^{\theta}(\tau,T)-\bar{R}^{\theta}(\tau,T)\right]u$
converges point-wise for every $ u \in\,C_b(\rtwo)$, due to the dominated convergence theorem, we have
\[\lim_{\epsilon\rightarrow0}\left|\left[R_{\epsilon}^{\theta}(\tau,T)-\bar{R}^{\theta}(\tau,T)\right]u\right|_{H_{\gamma}}=0.\]
The function $\gamma$ introduced in Proposition \ref{prop: semigroup on the weight} satisfies that $\inf_{x \in\,K}\gamma^\vee(x)>0$
for every compact set $K \subset \rtwo$.
Then, using a localization argument, it is possible to prove that for any $u\in H_{\gamma}$ there exists a sequence  $\left\{ u_{n}\right\} _{n\geq1}\subset C_{b}(\rtwo)$
such that $u_{n}\rightarrow u$ in $H_{\gamma}$.
Thanks to (\ref{eq: bounded semigroups}) and \eqref{eq: limit kernel applied to the weight}, we have
\[\left|\left[R_{\epsilon}^{\theta}(\tau,T)-\bar{R}^{\theta}(\tau,T)\right](u-u_{n})\right|_{H_{\gamma}}\leq C_T\,|u-u_n|_{H_\gamma}.\]
This implies that
\begin{align*}
\left|\left[R_{\epsilon}^{\theta}(\tau,T)-\bar{R}^{\theta}(\tau,T)\right]u\right|_{H_{\gamma}} & \leq\left|\left[R_{\epsilon}^{\theta}(\tau,T)-\bar{R}^{\theta}(\tau,T)\right]u_{n}\right|_{H_{\gamma}}+C_T\,|u-u_n|_{H_\gamma},
\end{align*}
and the proof is done.
\end{proof}
Next we will show that the mild solutions $u_{\epsilon}$ to the SPDEs
(\ref{eq: linear SPDE}) converges to the mild solution $\bar{u}$ to \eqref{eq: limit linear SPDE}, for which we need the following lemma. 
\begin{lem}
\label{lem: Weak type convergence of SPDE}Under Assumptions \ref{Assumption 1} and \ref{Assumption 3}, 
for any fixed  $T>0$ and $\theta \in\,C([0,T])$ we have 
\begin{equation}
\label{cx57}
\lim_{\epsilon\rightarrow0}\sum_{j=1}^{\infty}\left|\int_{0}^{t}(S_{\epsilon}(s)-\bar{S}(s)^{\vee})e_{j}\theta(s)ds\right|_{H_{\gamma}}^{2}=0.
\end{equation}
\end{lem}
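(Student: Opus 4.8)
The plan is to run the splitting argument of Lemma~\ref{lem: convergenc of the infinite sum}, but with the pointwise‑in‑time convergence \eqref{eq: semigroups weak convergence weighted space}---unavailable here, since Assumption~\ref{Assumption 2} has been dropped---replaced by the time‑integrated convergence of Proposition~\ref{prop: integral of semigroup converges weighted space}. Fix $\delta>0$ and, exactly as in Lemma~\ref{lem: convergenc of the infinite sum}, write $m=m_1+m_2$ with $m_1=m\mathbf{1}_{\{m<\eta\}}$, $m_2=m\mathbf{1}_{\{m\ge\eta\}}$, so that $e_j=\widehat{v_jm^{1/2}}=e_j^{(1)}+e_j^{(2)}$ with $e_j^{(i)}:=\widehat{v_jm_i^{1/2}}$. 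I would then use
\[
\sum_{j\ge1}\Bigl|\int_0^T(S_\epsilon(s)-\bar S(s)^\vee)e_j\,\theta(s)\,ds\Bigr|_{H_\gamma}^2\le 2A_\epsilon(\eta)+2B_\epsilon(\eta),
\]
where $A_\epsilon(\eta)$ and $B_\epsilon(\eta)$ are the same sums with $e_j$ replaced by $e_j^{(1)}$ and $e_j^{(2)}$, respectively, and choose first $\eta$, then a truncation level $N$, then $\epsilon$, so that each contribution is at most $\delta/3$.

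\emph{The $m_1$‑part $A_\epsilon(\eta)$, to be bounded uniformly in $\epsilon$.} Taking $\psi\equiv1\in H_\gamma$ in the estimate of the term $I_1$ in the proof of Lemma~\ref{lem: convergenc of the infinite sum} gives
\[
\sum_{j\ge1}\bigl|(S_\epsilon(s)-\bar S(s)^\vee)e_j^{(1)}\bigr|_{H_\gamma}^2\le C\eta\,s^{-1}e^{Cs}\,|1|_{H_\gamma}^2,\qquad s\in(0,T].
\]
Since $|\int_0^T g_j\,ds|_{H_\gamma}\le\int_0^T|g_j|_{H_\gamma}\,ds$, Minkowski's integral inequality for the $\ell^2(j;H_\gamma)$‑norm yields
\[
A_\epsilon(\eta)^{1/2}\le\int_0^T\Bigl(\sum_{j\ge1}\bigl|(S_\epsilon(s)-\bar S(s)^\vee)e_j^{(1)}\bigr|_{H_\gamma}^2\Bigr)^{1/2}|\theta(s)|\,ds\le (C\eta)^{1/2}e^{CT/2}\|\theta\|_\infty|1|_{H_\gamma}\int_0^T s^{-1/2}\,ds .
\]
The key mechanism is that pushing the $\ell^2$‑norm inside the time integral turns the nonintegrable $s^{-1}$ into the integrable $s^{-1/2}$, so $A_\epsilon(\eta)\le C_1\eta$ with $C_1$ independent of $\epsilon$. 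I would then fix $\eta=\eta_\delta$ with $2A_\epsilon(\eta_\delta)\le\delta/3$ for every $\epsilon$.

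\emph{The $m_2$‑part $B_\epsilon(\eta_\delta)$.} With $\eta_\delta$ fixed, $\|m_2\|_{L^1}\le\|m\|_{L^p}\eta_\delta^{-(p-1)}<\infty$, so (again with $\psi\equiv1$) the estimate of $I_2$ in Lemma~\ref{lem: convergenc of the infinite sum} gives $\sum_{j\ge1}|e_j^{(2)}|_{H_\gamma}^2\le(2\pi)^{-2}\|m_2\|_{L^1}|1|_{H_\gamma}^2<\infty$; in particular each $e_j^{(2)}\in H_\gamma$. Splitting $B_\epsilon=\sum_{j\le N}+\sum_{j>N}$ and using \eqref{eq: bounded semigroups}, \eqref{eq: bounded limit semigroups} together with $|\int_0^T(S_\epsilon(s)-\bar S(s)^\vee)e_j^{(2)}\theta\,ds|_{H_\gamma}\le 2e^{CT/2}\|\theta\|_{L^1}|e_j^{(2)}|_{H_\gamma}$, the tail is controlled by
\[
\sum_{j>N}\Bigl|\int_0^T(S_\epsilon(s)-\bar S(s)^\vee)e_j^{(2)}\theta\,ds\Bigr|_{H_\gamma}^2\le 4e^{CT}\|\theta\|_{L^1}^2\sum_{j>N}|e_j^{(2)}|_{H_\gamma}^2,
\]
which, being the tail of a convergent series, is $\le\delta/6$ for all $N\ge N_\delta$, uniformly in $\epsilon$. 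For the finitely many remaining terms, Proposition~\ref{prop: integral of semigroup converges weighted space} (with $\tau=0$ and $u=e_j^{(2)}$) gives $\lim_{\epsilon\to0}|\int_0^T(S_\epsilon(s)-\bar S(s)^\vee)e_j^{(2)}\theta\,ds|_{H_\gamma}=0$, so $\sum_{j\le N_\delta}$ tends to $0$ and I would pick $\epsilon_\delta$ with $2\sum_{j\le N_\delta}(\cdots)\le\delta/3$ for $\epsilon\le\epsilon_\delta$. Adding the three bounds gives the total $\le\delta$ for $\epsilon\le\epsilon_\delta$, proving \eqref{cx57}; the same argument with $T$ replaced by any $t\le T$ is identical.

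I expect the genuine obstacle to be conceptual rather than computational: once Assumption~\ref{Assumption 2} is unavailable one loses pointwise‑in‑$t$ semigroup convergence, and only the averaged statement of Proposition~\ref{prop: integral of semigroup converges weighted space} survives. This forces the convergence of the finite‑dimensional block to be extracted from the time integral, while the infinite tail must still be tamed uniformly in $\epsilon$ through the $m=m_1+m_2$ splitting; reconciling these two mechanisms---together with the $s^{-1}\mapsto s^{-1/2}$ regularisation supplied by Minkowski's inequality---is the heart of the argument.
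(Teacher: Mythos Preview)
Your proof is correct and follows essentially the same strategy as the paper: split $m=m_1+m_2$, make the $m_1$-contribution small uniformly in $\epsilon$ via the smallness of $\eta$, and handle the $m_2$-contribution by a finite truncation plus Proposition~\ref{prop: integral of semigroup converges weighted space} for the head and the $\|m_2\|_{L^1}$-tail bound for the remainder. The only minor variation is in the $m_1$-estimate: the paper exploits $m_1\in L^{2p}$ (with $\|m_1\|_{L^{2p}}\le\eta\|m\|_{L^p}^{1/2}$) and applies Lemma~\ref{lem: Semigroup property 1} with exponent $2p$ to get the integrable factor $s^{-(2p-1)/(2p)}$, whereas you invoke the $I_1$-bound of Lemma~\ref{lem: convergenc of the infinite sum} and then use Minkowski's inequality to turn $s^{-1}$ into $s^{-1/2}$---both mechanisms achieve the same end.
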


\begin{proof}
We will prove that for any $\delta>0$, there exists $\epsilon_{\delta}>0$
such that for any $0<\epsilon<\epsilon_{\delta}$
\[
\sum_{j=1}^{\infty}\left|\int_{0}^{t}(S_{\epsilon}(s)-\bar{S}(s)^{\vee})e_j\theta(s)ds\right|_{H_{\gamma}}^{2}\leq \delta.
\]
The spectral
measure $m\in L^{p}(\rtwo)$ for $p\in(1,\infty)$, which means that
$m^{1/2}\in L^{2p}(\mathbb{R}^2)$. Given $\eta>0$, we write $m=m_{1}+m_{2}$,
\[m_{1}:=m1_{\{m<\eta^{2}\}},\ \ \ \ \ m_{2}:=m1_{\{m\geq\eta^{2}\}}.\]
Then $m^{1/2}=m_{1}^{1/2}+m_{2}^{1/2}$ and
\begin{align*}
\sum_{j=1}^{\infty}\left|\int_{0}^{t}(S_{\epsilon}(s)-\bar{S}(s)^{\vee})e_j\theta(s)ds\right|_{H_{\gamma}}^{2} &  \leq 2\sum_{j=1}^{\infty}\left|\int_{0}^{t}(S_{\epsilon}(s)-\bar{S}(s)^{\vee})\left(\widehat{v_{j}m_{1}^{1/2}}\right)\theta(s)\,ds\right|_{H_{\gamma}}^{2}\\
  & \quad+2 \sum_{j=1}^{\infty}\left|\int_{0}^{t}(S_{\epsilon}(s)-\bar{S}(s)^{\vee})\left(\widehat{v_{j}m_{2}^{1/2}}\right)
  \theta(s)\,ds\right|_{H_{\gamma}}^{2}\\
  &=:I^\eta_{1,\epsilon}+I^\eta_{2,\epsilon}.
\end{align*}
For the first term, since $\Vert m_{1}\Vert_{L^{2p}}\leq\eta\Vert m\Vert_{L^{p}}^{1/2}$, due to Lemma \ref{lem: Semigroup property 1}
we have 
\begin{align*}
I^\eta_{1,\epsilon}  &  \leq 2 \sum_{j=1}^{\infty}\left|\int_{0}^{t}(S_{\epsilon}(s)-\bar{S}(s)^{\vee})\left(\widehat{v_{j}m_{1}^{1/2}}\right)\theta(s)\right|_{H_{\gamma}}^{2}ds \\
& \leq C_T\int_{0}^{t}\Vert m_{1}\Vert_{L^{2p}}s^{-(2p-1)/2p}\,ds\|\theta\|_{L^\infty}^2\\
 & \leq C_T\eta\Vert m\Vert_{L^{p}}^{1/2}\int_{0}^{T}s^{-(2p-1)/2p}ds \|\theta\|_{L^\infty}^2.
\end{align*}
Hence we can choose $\eta_\delta>0$ small enough such that
\begin{equation}
\label{cx55}
\sup_{\e>0}\,I^{\eta_\delta}_{1,\epsilon} <\frac \delta 3.
\end{equation}

For the second term, for every $N \in\,\mathbb{N}$ we have
\begin{align*}
I^\eta_{2,\epsilon}  &  = 2 \sum_{j=1}^{\infty}\left|\int_{0}^{t}(S_{\epsilon}(s)-\bar{S}(s)^{\vee})\left(\widehat{v_{j}m_{2}^{1/2}}\right)
  \theta(s)\,ds\right|_{H_{\gamma}}^{2}\\
  &\leq C  \sum_{j=1}^{N}\left|\int_{0}^{t}(S_{\epsilon}(s)-\bar{S}(s)^{\vee})\left(\widehat{v_{j}m_{2}^{1/2}}\right)
  \theta(s)\,ds\right|_{H_{\gamma}}^{2}\\
  & \quad +C_T \sum_{j=N+1}^{\infty}\int_0^t\left|(S_{\epsilon}(s)-\bar{S}(s)^{\vee})\left(\widehat{v_{j}m_{2}^{1/2}}\right)\right|_{H_\gamma}^2|\theta(s)|^2\,ds\\
  &=:J^{N,\eta}_{1,\epsilon}+J^{N,\eta}_{2,\epsilon}.
\end{align*}
Since 
\[\Vert m_2 \Vert_{L^1}\leq \eta^{-(2p-2)}\|m\|_{L^p}^p,\] by proceeding as in the proof of Lemma \ref{lem: convergenc of the infinite sum}, once fixed $\delta>0$ there exists $N_{\delta} \in\,\mathbb{N}$ such that
\begin{equation}
\label{cx56}
\sup_{\e>0} J^{N_\delta,\eta_\delta}_{2,\epsilon}<\frac \delta 3.
\end{equation}
Then, once fixed $N_\delta$, due to Proposition \ref{prop: integral of semigroup converges weighted space} we have that there exists $\epsilon_\delta>0$ such that
\[J^{N_\delta,\eta_\delta}_{1,\epsilon}<\frac \delta 3,\ \ \ \ \mbox{for }\epsilon<\epsilon_\delta.\]
This inequality, together with \eqref{cx56} and \eqref{cx55}, implies \eqref{cx57}.

\end{proof}
\begin{thm}
\label{thmLinear}
Suppose the Hamiltonian $H$ satisfies Assumption \ref{Assumption 1}.
The spectral measure to the spatially homogeneous Wiener process $\mathcal{W}(t)$
satisfies Assumption \ref{Assumption 3}. Let $u_{\epsilon}\in\mathcal{H}_{q}$
be the unique mild solutions to (\ref{eq: linear SPDE}) and $\bar{u}\in\bar{\mathcal{H}}_{q}$
be the unique mild solution to (\ref{eq: limit linear SPDE}) with
the same initial condition $\varphi$ and $\varphi^{\wedge}$, respectively. Then for any fixed $T>0$, $q\geq 1$ and $\theta \in\,C([0,T])$, we have
that 
\begin{equation}
\label{cx30}
\lim_{\epsilon\rightarrow0}\mathbb{E}\left|\int_{0}^{T}\left[u_{\epsilon}(t)-\bar{u}(t)^{\vee}\right]\theta(t)dt\right|_{H_{\gamma}}^{q}=\lim_{\epsilon\rightarrow0}\mathbb{E}\left|\int_{0}^{T}\left[u_{\epsilon}(t)^{\wedge}-\bar{u}(t)\right]\theta(t)dt\right|_{\bar{H}_{\gamma}}^{q}=0.
\end{equation}
\end{thm}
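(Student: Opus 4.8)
The plan is to split the time-integrated difference into a deterministic piece coming from the initial datum and a stochastic piece, and to treat the two separately; the second equality in \eqref{cx30} will then follow immediately from the first. Since $(\bar S(t)\varphi^{\wedge})^{\vee}=\bar S(t)^{\vee}\varphi$ and, expanding $\bar{\mathcal W}$ in the orthonormal system $\{e_j^{\wedge}\}$, $\big(\int_0^t\bar S(t-s)\,d\bar{\mathcal W}(s)\big)^{\vee}=\int_0^t\bar S(t-s)^{\vee}\,d\mathcal W(s)$, I would write
\[
u_{\epsilon}(t)-\bar u(t)^{\vee}=(S_{\epsilon}(t)-\bar S(t)^{\vee})\varphi+\int_0^t(S_{\epsilon}(t-s)-\bar S(t-s)^{\vee})\,d\mathcal W(s).
\]
Integrating against $\theta$, the deterministic term becomes $[R_{\epsilon}^{\theta}(0,T)-\bar R^{\theta}(0,T)]\varphi$, whose $H_{\gamma}$-norm tends to $0$ by Proposition \ref{prop: integral of semigroup converges weighted space}, because $\varphi\in H_{\gamma}$. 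For the second equality, I would apply $\wedge$ to the whole integral and use $(f^{\vee})^{\wedge}=f$ together with $\vert w^{\wedge}\vert_{\bar H_{\gamma}}\le\vert w\vert_{H_{\gamma}}$ from Proposition \ref{prop:weightedSpaces}, which gives $\mathbb E\big\vert\int_0^T[u_{\epsilon}^{\wedge}-\bar u]\theta\,dt\big\vert_{\bar H_{\gamma}}^{q}\le\mathbb E\big\vert\int_0^T[u_{\epsilon}-\bar u^{\vee}]\theta\,dt\big\vert_{H_{\gamma}}^{q}$, so only the first limit needs to be proved.

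It then remains to control the stochastic contribution $B_{\epsilon}:=\int_0^T\big[\int_0^t(S_{\epsilon}(t-s)-\bar S(t-s)^{\vee})\,d\mathcal W(s)\big]\theta(t)\,dt$. Since $\Vert S_{\epsilon}(r)\Vert_{HS}^{2}\le C_T\,r^{-(p-1)/p}$ (by Lemma \ref{lem: Semigroup property 1} applied with $\psi\equiv1\in H_{\gamma}$, the constant function being in $H_{\gamma}$ because $\gamma^{\vee}\in L^1$), and likewise for $\bar S(r)^{\vee}$ via Lemma \ref{lem: limit semigroup property 1}, one has $\int_0^T\!\int_0^t\Vert S_{\epsilon}(t-s)-\bar S(t-s)^{\vee}\Vert_{HS}^{2}\,ds\,dt<\infty$, so the stochastic Fubini theorem applies and yields $B_{\epsilon}=\int_0^T\Phi_{\epsilon}(s)\,d\mathcal W(s)$ with the deterministic operator
\[
\Phi_{\epsilon}(s)=\int_s^T(S_{\epsilon}(t-s)-\bar S(t-s)^{\vee})\theta(t)\,dt=\int_0^{T-s}(S_{\epsilon}(r)-\bar S(r)^{\vee})\theta(s+r)\,dr.
\]
As $\Phi_{\epsilon}(s)$ is deterministic, $B_{\epsilon}$ is a centered Gaussian variable in $H_{\gamma}$, so all its moments are equivalent to the variance, $\mathbb E\vert B_{\epsilon}\vert_{H_{\gamma}}^{q}\le C_q\big(\int_0^T\Vert\Phi_{\epsilon}(s)\Vert_{HS}^{2}\,ds\big)^{q/2}$, and it suffices to prove $\int_0^T\Vert\Phi_{\epsilon}(s)\Vert_{HS}^{2}\,ds\to0$.

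I would obtain this by pointwise convergence in $s$ together with dominated convergence. For the domination, Minkowski's integral inequality for the Hilbert--Schmidt norm gives $\Vert\Phi_{\epsilon}(s)\Vert_{HS}\le\Vert\theta\Vert_{\infty}\int_0^T\Vert S_{\epsilon}(r)-\bar S(r)^{\vee}\Vert_{HS}\,dr\le C_T\Vert\theta\Vert_{\infty}\int_0^T r^{-(p-1)/(2p)}\,dr=:M<\infty$, uniformly in $s$ and $\epsilon$, since $(p-1)/(2p)<1$. For the pointwise limit I would fix $s\in[0,T)$ and set $\tilde\theta_s(r):=\theta(s+r)\in C([0,T-s])$; then $\Vert\Phi_{\epsilon}(s)\Vert_{HS}^{2}=\sum_{j}\big\vert\int_0^{T-s}(S_{\epsilon}(r)-\bar S(r)^{\vee})e_j\,\tilde\theta_s(r)\,dr\big\vert_{H_{\gamma}}^{2}$, which is exactly the quantity shown to vanish in Lemma \ref{lem: Weak type convergence of SPDE}, applied with horizon $T-s$ and continuous weight $\tilde\theta_s$. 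Hence $\Vert\Phi_{\epsilon}(s)\Vert_{HS}^{2}\to0$ for every $s$, and the uniform bound $M$ lets dominated convergence conclude $\int_0^T\Vert\Phi_{\epsilon}(s)\Vert_{HS}^{2}\,ds\to0$.

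The main obstacle I anticipate is the passage through the stochastic Fubini theorem and the recognition that, after the change of variable $r=t-s$, the inner object is precisely the time-shifted version of the quantity controlled by Lemma \ref{lem: Weak type convergence of SPDE}: verifying the integrability hypothesis for Fubini and, above all, establishing the uniform-in-$s$ domination so that the merely pointwise convergence of the lemma upgrades to convergence of the $s$-integral are the delicate points, whereas reducing the $q$-th moment to the variance is immediate by Gaussianity of $B_{\epsilon}$.
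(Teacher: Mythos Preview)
Your proposal is correct and follows essentially the same route as the paper: split into the deterministic piece handled by Proposition~\ref{prop: integral of semigroup converges weighted space}, apply stochastic Fubini to rewrite the stochastic piece as $\int_0^T\Phi_\epsilon(s)\,d\mathcal W(s)$, reduce the $q$-th moment to $\int_0^T\Vert\Phi_\epsilon(s)\Vert_{HS}^2\,ds$, and conclude via Lemma~\ref{lem: Weak type convergence of SPDE} plus dominated convergence. The only cosmetic differences are that the paper invokes the Burkholder--Davis--Gundy inequality rather than Gaussianity for the moment bound, and is less explicit than you are about the uniform domination needed for the dominated convergence step.
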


\begin{proof}
We have
\begin{align*}
\int_{0}^{T}\left[u_{\epsilon}(t)-\bar{u}(t)^{\vee}\right]\theta(t)dt & =\int_{0}^{T}\left[S_{\epsilon}(t)\varphi-\bar{S}(t)^{\vee}\varphi\right]\theta(t)dt\\
 & \quad+\int_{0}^{T}\int_{0}^{t}\left[S_{\epsilon}(t-s)-\bar{S}(t-s)^{\vee}\right]d\mathcal{W}(s)\theta(t)dt\\
 & =:I_{\epsilon,1}+I_{\epsilon,2}.
\end{align*}
By Proposition \ref{prop: integral of semigroup converges weighted space}
\begin{equation}
\label{cx31}
\lim_{\epsilon\rightarrow0}\vert I_{\epsilon,1}\vert_{H_{\gamma}}=\lim_{\epsilon\rightarrow0}\left|\left[R_{\epsilon}^{\theta}(0,T)-\bar{R}^{\theta}(0,T)\right]u\right|_{H_{\gamma}}=0.
\end{equation}
For the second term, using Lemma \ref{lem: Semigroup property 1}
and Lemma \ref{lem: limit semigroup property 1}
\begin{align*}
\mathbb{E}\left|\int_{0}^{t}\left[S_{\epsilon}(t-s)-\bar{S}(t-s)^{\vee}\right]d\mathcal{W}(s)\right|_{H_{\gamma}} & \leq\mathbb{E}\left(\int_{0}^{t}\left|S_{\epsilon}(t-s)\right|_{L_{(HS)}(\mathscr{S}_{q}',H_{\gamma})}^{2}+\left|\bar{S}(t-s)^{\vee}\right|_{L_{(HS)}(\mathscr{S}_{q}',H_{\gamma})}^{2}ds\right)^{1/2}\\
 & \leq C_T\Vert m\Vert_{p}^{1/2}\left(\int_{0}^{t}(t-s)^{-(p-1)/p}ds\right)^{1/2}\\
 & =C_T\Vert m\Vert_{p}^{1/2}t^{1/2p},
\end{align*}
which is finite. Hence we can use the Burkholder-Davis-Gundy inequality  to obtain  
\begin{align*}
\mathbb{E}\,\left|I_{\epsilon,2}\right|_{H_{\gamma}}^{q} & =\mathbb{E}\,\left|\int_{0}^{T}\int_{s}^{T}\left[S_{\epsilon}(t-s)-\bar{S}(t-s)^{\vee}\right]\theta(t)dt\,d\mathcal{W}(s)\right|_{H_{\gamma}}^{q}\\
&=\mathbb{E}\,\left|\int_{0}^{T}\int_{0}^{T-s}\left[S_{\epsilon}(t)-\bar{S}(t)^{\vee}\right]\theta(t+s)dt\,d\mathcal{W}(s)\right|_{H_{\gamma}}^{q}\\
 & \leq C_{T,q}\left(\int_{0}^{T}\sum_{j=1}^{\infty}\left|\int_{0}^{T-s}\left[S_{\epsilon}(t)e_{j}-\bar{S}(t)^{\vee}e_{j}\right]\theta(t+s)dt\right|_{H_{\gamma}}^{2}ds\right)^{q/2}.
\end{align*}
By Lemma \ref{lem: Weak type convergence of SPDE} and the dominated
convergence theorem, we have that 
\[\lim_{\epsilon\rightarrow0}\mathbb{E}\,\left|I_{\epsilon,2}\right|_{H_{\gamma}}^{q}=0.\]
This, together with \eqref{cx31}, implies \eqref{cx30}.
\end{proof}


\end{document}